\newtheorem{thm}{Theorem}[section]
\newtheorem{lemma}[thm]{Lemma}
\newtheorem{rem}[thm]{Remark}
\newtheorem{myDef}{Definition}[section]
\newtheorem{example}{Example}[section]
\newcommand{\tx}{\textbf x}
\newcommand{\mR}{\mathbb{R}}
\newcommand{\nn}{\nonumber}
\def\epsilon{\varepsilon} 
\begin{document}
\begin{frontmatter}
\title{
On the rotating nonlinear Klein--Gordon equation with multiscale effects: structure-preserving methods and applications to vortex dynamics
}

\author[a]{Meng Li}
\address[a]{School of Mathematics and Statistics, Zhengzhou University,
Zhengzhou 450001, China}
\ead{limeng@zzu.edu.cn}
\author[a]{Chunyan Niu*}
\ead{Corresponding author:chunyanniu@zzu.edu.cn}
\author[a]{Huifei Wang}
\author[b]{Junjun Wang}
\address[b]{School of Mathematics and Statistics, Pingdingshan University, Pingdingshan 467000, China}
\fntext[funding]{The research was supported by the National Natural Science Foundation of China (Nos. 11801527, U23A2065), the Natural Science Foundation (NSF) of Henan Province (Nos.252300420343, 242300421211).}

\begin{abstract}
  We study numerical methods for the rotating nonlinear Klein--Gordon (RKG) equation, a fundamental model in relativistic quantum physics, which exhibits highly oscillatory multiscale behavior due to the presence of a small parameter~$\epsilon$. 
  The RKG equation models rotating galaxies under the Minkowski metric and also provides an effective description of phenomena such as cosmic superfluids. 
  This work focuses on the development and rigorous analysis of structure-preserving Galerkin finite element methods (FEMs) for the RKG equation. 
  A central challenge is that the rotational terms prevent traditional nonconforming FEMs from simultaneously conserving energy and charge. By employing a conservation-adjusting technique, we construct a consistent structure-preserving algorithm applicable to both conforming and nonconforming FEMs.
  Moreover, we provide a comprehensive convergence analysis, establishing unconditional optimal and high-order accuracy error estimates. 
  These theoretical results are further validated through extensive numerical experiments, which demonstrate the accuracy, efficiency, and robustness of the structure-preserving schemes. 
  Finally, simulations of vortex dynamics, ranging from the relativistic to the nonrelativistic regimes, are presented to illustrate vortex creation, relativistic effects on bound states, and interactions of vortex pairs.
\end{abstract} 

\begin{keyword}
RKG equation; FEM; energy and charge conservation; convergence; vortex dynamics
\end{keyword}

\end{frontmatter}

\setcounter{equation}{0}
\section{Introduction} \label{sec:intro}

The universe can be viewed as a type of superfluid, an idea originating from the seminal works of Witten \cite{witten1985superconducting} and Zurek \cite{zurek1985cosmological}, where such systems are modeled by complex-valued scalar fields, in analogy with the Ginzburg-Landau theory, with an order parameter characterizing the underlying phase coherence \cite{ginzburg1950toward}. Within this framework, the Klein-Gordon (KG) equation \cite{bao2014uniformly} describes the dynamics of spin-0 (scalar) particles, including the Higgs boson, pions ($\pi$ mesons), and axion-like dark matter candidates. By incorporating nonlinear potential terms, one obtains the nonlinear KG (NKG) equation \cite{xiong2014relativistic}, which can be viewed as a relativistic generalization of the Gross-Pitaevskii equation, a nonlinear Schrödinger equation that serves as a fundamental model for Bose-Einstein condensates (BECs) \cite{Bao2013Kinetic}. Further including rotation effects, such as Coriolis and centrifugal forces, leads to the rotating NKG (RKG) equation \cite{huang2016a, low2016fantasia, Mauser2020on, wang2025stable}. This equation provides a unified framework linking relativistic quantum field theory, nonlinear dynamics, and BEC physics \cite{Bao2013Kinetic,zhang2007dynamics,tang2017robust}, while introducing two major computational challenges: the asymmetric spatial discretization induced by rotation, and the numerical instabilities arising from the interplay of nonlinearities and high-order derivatives.

In this work, we consider the following RKG equation
\begin{align}\label{eq:model1}
	\frac{1}{c^2}\partial_{tt}\Psi(\textbf{x}, t)-\Delta \Psi(\textbf{x}, t)
	+\left(\frac{mc}{\hbar}\right)^2\Psi(\textbf{x}, t)
	+\left(V(\textbf{x})+m\lambda\,|\Psi(\textbf{x}, t)|^2\right)\Psi(\textbf{x}, t)
	-R_{Co}-R_{ce}=0,
\end{align}
where $\Psi=\Psi(\tx, t): \mR^d\times \mR^+\rightarrow \mathbb{C}$ $(d=2, 3)$ is the scaler field, $V(\tx): \mR^d\rightarrow \mR$ is an external trapping potential, $m$ represents the mass, $c$ denotes the speed of light, $\hbar$ is the Plank constant, $\lambda$ denotes the self-interaction constant, and 
the Coriolis term $R_{Co}$ and the centrifugal term $R_{ce}$ are denoted by 
\begin{align*}
	R_{Co}:=\text{i}\frac{2\Omega}{c^2}L_z\partial_t\Psi(\textbf{x}, t),
	\qquad
	R_{ce}:=\frac{\Omega^2}{c^2}L_z^2\Psi(\tx, t),\qquad (\tx, t)\in \mR^d\times \mR^+,
\end{align*}
with $\Omega$ representing the angular velocity and the angular momentum operator 
\begin{align*}
	L_z:=-\text{i}\hbar\left(x\partial_y-y\partial_x\right).
\end{align*}

The RKG equation \eqref{eq:model1} has been used to describe the creation and dynamics of quantized vortices in galaxies, in two- or three-dimensional (2D/3D) models, depicting this phenomenon as a ``cosmic rotating superfluid.'' The formation of vortices follows the relativistic Feynman relation, with quantum turbulence manifesting as tangled vortices, consistent with mechanisms of matter creation during the Big Bang, while the generation and propagation of Kelvin waves are also in good agreement with observations~\cite{guo2018towards,huang2016a,low2016fantasia,xiong2014relativistic,xiong2016relativistic}. Furthermore, generalizations of the RKG equation to curved spacetimes, such as the BTZ and Kerr metrics describing black holes, have also been investigated~\cite{bezerra2014klein,pourhassan2016the,rowan1977the,chakrabarti2009normal,valtancoli2016scalar}.

To nondimensionalize the RKG equation, we introduce the following scaling:
\begin{align*}
	\widetilde{\tx} = \frac{\tx}{x_s}, \quad 
	\widetilde{t} = \frac{t}{t_s}, \quad 
	t_s = \frac{m x_s^2}{\hbar}, \quad 
	\widetilde{\Psi}(\widetilde{\tx}, \widetilde{t}) = \Psi(\tx, t),
\end{align*}
where $x_s$ and $t_s$ represent the characteristic length and time scales, respectively.
We further denote 
\begin{align*}
	\epsilon:=\frac{\hbar}{mcx_s},\quad 
	\widetilde{\Omega}:=\Omega mx_s^2,\quad
	\widetilde{\lambda}:=\lambda mx_s^2,\quad
	\widetilde{V}(\widetilde{\tx})=x_s^2V(\tx),\quad
	\widetilde{L}_z:=-\text{i}\left(\widetilde{x}\partial_{\widetilde y}-\widetilde{y}\partial_{\widetilde{x}}\right). 
\end{align*}
Then, we obtain the following dimensionless RKG equation \cite{Mauser2020on}
\begin{align}\label{eq:model2}
	\epsilon^2\partial_{tt}\Psi-\Delta \Psi+
	\frac{1}{\epsilon^2}\Psi
	+\left(V+\lambda\,|\Psi|^2\right)\Psi
	-2\text{i}\Omega \epsilon^2L_z\partial_t\Psi-\Omega^2\epsilon^2L_z^2\Psi=0,\qquad (\tx, t)\in \mR^d\times \mR^+,
\end{align}
where we have removed all the~~$\widetilde{}$~~for simplicity. 
In this work, we fix $\hbar$ and consider the nonrelativistic limit $\epsilon\rightarrow 0$ (or $c\rightarrow \infty$). In addition, we use the following usual initial conditions
\begin{align}\label{eq:initial}
	\Psi(\tx, 0)=\psi_0(\tx),\qquad 
	\partial_t\Psi(\tx, 0)=\frac{1}{\epsilon^2}\psi_1(\tx),\qquad \tx\in\mathbb{R}^d, 
\end{align}
where $\psi_0$ and $\psi_1$ are given bounded complex-valued functions. 
The system \eqref{eq:model2}-\eqref{eq:initial} conserves the (Hamiltonian) energy and charge, i.e.,  
\begin{align}\label{eq:conser}
	E(t)\equiv E(0),\qquad Q(t)\equiv Q(0),\qquad t\geq 0, 
\end{align}
where the total energy and charge are defined by 
\begin{align}
	&E(t):=\int_{\mathbb{R}^d}\left[\epsilon^2|\partial_t\Psi|^2+
	|\nabla \Psi|^2+\frac{1}{\epsilon^2}|\Psi|^2
	+V|\Psi|^2+\frac{\lambda}{2}|\Psi|^4
	-\Omega^2\epsilon^2|L_z\Psi|^2\right]\text{d}\tx,\label{eq:ener} \\
	&Q(t):=\frac{\text{i}\epsilon^2}{2}\int_{\mathbb{R}^d}\left[\Psi\partial_t\overline{\Psi}-\overline{\Psi}\partial_t\Psi+2\Omega\Psi\left(x\partial_y-y\partial_x\right)\overline{\Psi}\right]\text{d}\tx. \label{eq:charge}
\end{align}

From \cite{bao2014uniformly, Mauser2020on}, in the nonrelativistic regime $0<\epsilon \ll 1$, by applying the multiscale expansion
\[
\Psi(\mathbf{x}, t) = e^{\text{i}t/\epsilon^2} z_+(\mathbf{x}, t) + e^{-\text{i}t/\epsilon^2} \overline{z_-}(\mathbf{x}, t) + r(\mathbf{x}, t),\qquad \tx\in\mathbb R^d,\quad t\geq 0, 
\]
with $z_{\pm}=z_{\pm}(\tx, t)$ and $r=r(\tx, t)$ being unknown to be determined,  
and taking $\epsilon \to 0$, the relativistic RKG equation \eqref{eq:model2} formally reduces to the coupled rotating nonlinear Schrödinger (RNLS) equations:
\begin{align}\label{eq:coupled-NLS}
\begin{cases}
2\text{i}\,\partial_t z_+ - \Delta z_+ + V z_+ + \lambda \left( |z_+|^2 + 2|z_-|^2 \right) z_+ + 2 \Omega L_z z_+ = 0,\\[1ex]
2\text{i}\,\partial_t z_- - \Delta z_- + V z_- + \lambda \left( |z_-|^2 + 2|z_+|^2 \right) z_- + 2 \Omega L_z z_- = 0,
\end{cases}
\end{align}
with initial conditions
\[
z_+(\tx,0) = \frac{1}{2}\bigg(\psi_0(\tx) - \text{i} \psi_1(\tx)\bigg), \qquad z_-(\tx,0) = \frac{1}{2}\bigg(\overline{\psi_0}(\tx) - \text{i}\,\overline{\psi_1}(\tx)\bigg),
\]
ensuring consistency between the original RKG data and the limiting system. The system \eqref{eq:coupled-NLS}  conserves the mass and energy in the senses that 
\begin{align*}
    M_{\pm}\left(t\right)\equiv M_{\pm}(0),\qquad E_{[z_+, z_-]}(t)\equiv E_{[z_+, z_-]}(0),\qquad t\geq 0, 
\end{align*}
with the mass $M_{\pm}\left(t\right) := \int_{\mathbb{R}^d}\left|z_{\pm}\right|^2 \, \text{d}\mathbf{x}$ and the energy 
\begin{align*}
E_{[z_+, z_-]}(t) := \frac{1}{2} \int_{\mathbb{R}^d} \bigg[ |\nabla z_+|^2 + |\nabla z_-|^2 + V\big( |z_+|^2 + |z_-|^2 \big)  + \frac{\lambda}{2} \Big( |z_+|^4 + |z_-|^4 + 2 |z_+|^2 |z_-|^2 \Big) \bigg] \, \text{d}\mathbf{x}- \Omega \int_{\mathbb{R}^2} \mathrm{Re} \left( \overline{z_+} L_z z_+ + \overline{z_-} L_z z_- \right) \, \text{d}\mathbf{x}.
\end{align*}
In Appendix A, we detail the computation procedure of the bound state solutions of the coupled RNLS equations by using the normalized gradient flow with Lagrange multipliers (GFLM)~\cite{liu2021normalized,cai2021efficient}, which are then employed to investigate how relativistic effects modify the vortex patterns of the nonrelativistic coupled RNLS equations.

In \cite{Mauser2020on}, the authors studied the RKG equation \eqref{eq:model2} across relativistic and nonrelativistic regimes, analyzed vortex states of the corresponding coupled RNLS equations, and proposed efficient numerical methods for simulating vortex dynamics. These methods, which discretize the RKG equation on a truncated finite domain using either polar or rotating Lagrangian coordinates, may obscure the intrinsic mathematical structure of the model, potentially resulting in drawbacks such as the loss of structure-preserving properties. 
It is well recognized that structure-preserving schemes generally outperform non-structure-preserving ones. This advantage stems from their ability to maintain certain invariant properties, which allows them to accurately capture intricate details of the underlying physical processes. From this perspective, discrete schemes that preserve the invariants of the original continuous model provide a natural benchmark for assessing the reliability of numerical simulations.
Recently, Wang et al.~\cite{wang2025stable} proposed stable and conservative finite-difference time-domain methods for the RKG equation in natural Cartesian coordinates. However, their work only presented the structure-preserving scheme based on the finite difference method and did not include a detailed convergence analysis.

For the model under consideration, finite element methods (FEMs) offer at least two notable advantages over other spatial discretization techniques: they are well-suited for solving the RKG equation with rough or discontinuous potentials, and their compatibility with adaptive meshes makes them particularly effective for resolving complex vortex dynamics. Additionally, FEMs are capable of handling models defined on more complex computational domains. 
In general, both conforming and nonconforming FEMs are extensively used for solving partial differential equations, with the latter relaxing the continuity constraint at element interfaces and thus allowing inconsistencies at shared nodes between adjacent elements.
 This added flexibility provides distinct advantages, especially in managing complex geometries and capturing localized phenomena, while often imposing lower regularity requirements on the underlying continuous solution. 
In this paper, for theoretical completeness, we aim to develop a unified framework for solving the RKG equation adopting both the conforming and nonconforming FEMs. The presence of rotation terms presents a significant challenge for constructing structure-preserving schemes with nonconforming FEMs. This problem is resolved by applying a conservation-adjusting strategy that incorporates a novel stabilization term into the fully discrete scheme. 
Another challenge arises from the implicit nature of structure-preserving FEMs, which complicates the convergence analysis, especially when a stabilization term is introduced in the nonconforming setting. For the structure-preserving conforming and nonconforming FEMs constructed in this work, we employ the cut-off technique \cite{henning2017crank} combined with the time-space error splitting approach \cite{wang2014new} to provide a detailed convergence analysis, including proofs of both optimal and high-order convergence. These estimates are obtained without requiring any time-space step coupling condition and cover optimal $L^2$ and $H^1$ error bounds, as well as high-order convergence rates in the $H^1$-norm.
We emphasize that, for the RKG equation, no FEMs have yet been developed for its numerical solution. Beyond the choice of spatial discretization, there has also been no convergence analysis for structure-preserving schemes. These gaps constitute the main innovations of the present work.
In addition, extensive numerical experiments validate the theoretical results, demonstrating that the structure-preserving methods are accurate, efficient, and robust, while simulations of vortex dynamics across relativistic and nonrelativistic regimes reveal vortex formation, relativistic effects on bound states, and vortex-pair interactions.

The remainder of the paper is organized as follows. Section~\ref{sec2} presents the time-discrete method and the full-discrete method, and also establishes their energy and charge conservation. Section~\ref{sec3} states the main results on optimal and high-order error estimates, with detailed proofs provided in Section~\ref{sec4}. Section~\ref{sec5} reports numerical experiments that validate the theoretical analysis and illustrate vortex dynamics in the RKG equation. Conclusions are drawn in Section~\ref{sec6}. The Appendices A and B provide the bound state solutions of the coupled RNLS equations computed via the normalized GFLM, together with the proof of Lemma~\ref{lem:timediscrete_truncate_existence}.

\textbf{Notations.} 
In pratical computation, we truncate the whole space problem onto a bounded convex polyhedron $U\subset \mathbb R^d (d=2, 3).$ 
We use the standard notation for Sobolev spaces, which are denoted as \( W^{s,p}(U) \). Within this context, \( |\cdot|_{W^{s,p}} \) signifies the seminorm, while \( \|\cdot\|_{W^{s,p}} \) signifies the norm. Specifically, when \( p = 2 \), the space is designated as \( H^s(U) = W^{s,2}(U) \). Furthermore, the space \( H_0^1(U) \) is defined as the collection of functions \( v \) that belong to \( H^1(U) \) and are zero on the boundary \( \partial\Omega \), formally expressed as \( H_0^1(U) := \{ v \in H^1(U) : v|_{\partial\Omega} = 0 \} \). For the case when \( s = 0 \), the space reduces to \( L^p(U) = W^{0,p}(U) \).
For a finite time interval \( J = [0, T] \) with a positive constant \( T \), we define the Bochner space for a strongly measurable function \( \phi \) as:
\[ L^p(J, U) := \left\{ \phi: J \rightarrow X \mid \|\phi\|_{L^p(J, U)} < \infty \right\}, \]
where  
\begin{equation*}
\|\phi\|_{L^p(J, U)}:= 
\left\{
\begin{aligned}
	&\left[\int_0^T\|\phi(t)\|_{L^p(U)}^p\text{d}t\right]^{\frac1p},~~~~~~~~~~~~~~&\text{if}~p<+\infty,\\
	&\inf\left\{C: \|\phi\|_{L^\infty(U)}\leq C~~\text{a.e.}~\text{on}~J\right\},~~~~~~~~~~~~~~&\text{if}~p=+\infty.
\end{aligned}
\right.
\end{equation*}

\section{The discrete methods}\label{sec2}
\setcounter{equation}{0}

For the purpose of clarity and brevity in our exposition, we shall confine our discussion to 2D scenarios.
Nonetheless, the theoretical results and analytical techniques for 3D cases are, in principle, analogous to those in two dimensions.

\subsection{The time-discrete method}
Consider the uniform partitions of the time interval $J=[0, T]$ denoted by $\{I_n; n\in \mathbb N, 0\leq n\leq N-1\}$, where 
$I_n:=[t_{n}, t_{n+1}]$ with $0=t_0<t_1<\ldots<t_N=T$ and $|I_n|=\tau$. In what follows, we consider the time-discrete method for the RKG equation \eqref{eq:model2}.

\begin{myDef}
	(Time-discrete method for the RKG equation) Let 
	\begin{align}\label{eq:timediscrete_initial}
		\Psi_\tau^0:=\psi_0,\qquad \Psi_\tau^1=\psi_0+\frac{\tau}{\epsilon^2}\psi_1+\frac{\tau^2}{2\epsilon^2}\left[\Delta \psi_0-
		\frac{1}{\epsilon^2}\psi_0
		-\left(V+\lambda\,|\psi_0|^2\right)\psi_0
		+2\textup{i}\Omega L_z\psi_1+\Omega^2\epsilon^2L_z^2\psi_0\right].
	\end{align}
	Then for $n\geq 1$, we define the following time-discrete method, which is to find $\Psi_\tau^{n+1}\in H_0^1(U)$, $1\leq n\leq N-1$ such that 
	\begin{align}\label{eqn:timediscrete}
\epsilon^2\delta_{t\overline t}^2\Psi_\tau^n-\Delta \widetilde{\Psi}_\tau^n+
\frac{1}{\epsilon^2}\widetilde{\Psi}_\tau^n
+V\widetilde{\Psi}_\tau^n+\lambda\,\frac{\left|\Psi^{n+1}_\tau\right|^2+\left|\Psi^{n-1}_\tau\right|^2}{2}\widetilde{\Psi}_\tau^n
-2\textup{i}\Omega \epsilon^2L_z\delta_{\hat{t}}\Psi_\tau^n-\Omega^2\epsilon^2L_z^2\widetilde{\Psi}_\tau^n=0,
	\end{align}
where 
\begin{align*}
	\delta_t\Psi^n_\tau:=\frac{\Psi_\tau^{n+1}-\Psi_\tau^{n}}{\tau},
	\quad
	\delta_{\overline t}\Psi^n_\tau=\frac{\Psi_\tau^n-\Psi_\tau^{n-1}}{\tau},
	\quad
	\widetilde{\Psi}_\tau^n = \frac{\Psi_\tau^{n+1}+\Psi_\tau^{n-1}}{2},
	\quad 
	\delta_{\hat{t}}\Psi_\tau^n= \frac{\Psi_\tau^{n+1}-\Psi_\tau^{n-1}}{2\tau}.
\end{align*}

\end{myDef}

The well-posedness and convergence of the system \eqref{eqn:timediscrete} will be shown in Section \ref{sec4}. Furthermore, we can prove that the time-discrete method  \eqref{eqn:timediscrete} keeps the energy and charge conservation. To this end, we initially present the following lemmas, which will be instrumental for our subsequent theoretical analysis. 
Since their proofs are straightforward, we omit the detailed derivations.

\begin{lemma}\label{lem:xx}
 For any complex-valued functions $u$ and $v$, the following identity holds:
 \begin{align*}
 	\left(L_z u, v\right) = \left(u, L_z v\right),\qquad \left(L_z^2 u, v\right) = \left(L_z u, L_z v\right).
 \end{align*}
\end{lemma}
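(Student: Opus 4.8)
The plan is to view $L_z$ as a first-order differential operator attached to the rotational vector field and to obtain both identities by integration by parts. Write $\mathbf{b}(\tx):=(-y,\,x)^{\top}$, so that $L_z=-\mathrm{i}\,\mathbf{b}\cdot\nabla$. The structural fact that makes everything work is that $\mathbf{b}$ is divergence-free, $\nabla\cdot\mathbf{b}=\partial_x(-y)+\partial_y(x)=0$; consequently $L_z u=-\mathrm{i}\,\mathbf{b}\cdot\nabla u=-\mathrm{i}\,\nabla\cdot(\mathbf{b}\,u)$, i.e. $L_z$ is in divergence form. This is the only special property of $L_z$ that is needed.

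First I would apply the divergence theorem to $(L_z u, v)$ in the complex $L^2(U)$ inner product. This gives
\[
(L_z u, v)=-\mathrm{i}\int_U (\mathbf{b}\cdot\nabla u)\,\overline{v}\dL{d}
=-\mathrm{i}\int_{\partial U}(\mathbf{b}\cdot\normal)\,u\,\overline{v}\ds+\mathrm{i}\int_U u\,\nabla\cdot(\mathbf{b}\,\overline{v})\dL{d}.
\]
Then, using $\nabla\cdot(\mathbf{b}\,\overline{v})=(\nabla\cdot\mathbf{b})\overline{v}+\mathbf{b}\cdot\nabla\overline{v}=\mathbf{b}\cdot\nabla\overline{v}$ together with $\overline{-\mathrm{i}\,\mathbf{b}\cdot\nabla v}=\mathrm{i}\,\mathbf{b}\cdot\nabla\overline{v}=\overline{L_z v}$, the last integral is exactly $(u, L_z v)$, which establishes the first identity once the boundary term is dispatched. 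The second identity is then immediate: apply the first with $u$ replaced by $L_z u$ to get $(L_z^2 u, v)=(L_z(L_z u), v)=(L_z u, L_z v)$.

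The only point needing care — and the reason the argument is genuinely ``straightforward'' rather than trivial — is the boundary contribution $\int_{\partial U}(\mathbf{b}\cdot\normal)\,u\,\overline{v}\ds$. In the present setting the identities are used for functions in $H_0^1(U)$ (and their finite element analogues that vanish on $\partial U$), so the traces of $u$ and $\overline v$ vanish and the boundary integral disappears; alternatively, on a rotationally symmetric domain one would have $\mathbf{b}\cdot\normal=0$ pointwise on $\partial U$. In either case the boundary term is zero, and a routine density argument extends the identities from smooth functions to the relevant Sobolev class. Since no additional obstacle arises, the detailed derivation can reasonably be omitted in the main text.
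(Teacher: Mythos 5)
Your proof is correct and is precisely the standard argument the authors have in mind: the paper omits the proof as ``straightforward,'' and the intended derivation is exactly your integration by parts using the divergence-free field $\mathbf{b}=(-y,x)^{\top}$, with the second identity following from the first applied to $L_z u$. Your explicit treatment of the boundary term (vanishing traces for $H_0^1(U)$ functions, which is the setting in which the lemma is invoked) is the only point of substance, and you handle it correctly.
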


\begin{lemma}\label{lem:relations}
	For any sequence $U^n$, the following identities hold 
	\begin{align*}
		&\textup{Im}\left(\delta_{t\overline{t}}^2 U^n, \widetilde{U}^n\right)=\frac{\textup{Im} \left(\delta_tU^n, U^n\right)-\textup{Im}\left(\delta_tU^{n-1}, U^{n-1}\right)}{\tau},\\
		& \textup{Re}\left(\delta_{t\overline{t}}^2 U^n, \delta_{\hat{t}}U^n\right)=\frac{\left\|\delta_tU^{n}\right\|^2_{L^2}-\left\|\delta_tU^{n-1}\right\|^2_{L^2}}{2\tau},\\
		&\textup{Re}\left(\widetilde{U}^n, \delta_{\hat{t}}U^n\right)
		=\frac{\|U^{n+1}\|_{L^2}^2-\|U^{n-1}\|_{L^2}^2}{4\tau},\\
		&\textup{Re}\left(\nabla \widetilde{U}^n, \nabla \delta_{\hat{t}}U^n\right)
		=\frac{\left\|\nabla U^{n+1}\right\|_{L^2}^2-\left\|\nabla U^{n-1}\right\|_{L^2}^2}{4\tau},\\
		&\textup{Re}\left(L_z \widetilde{U}^n, L_z \delta_{\hat{t}}U^n\right)
		=\frac{\left\|L_z U^{n+1}\right\|_{L^2}^2-\left\|L_z U^{n-1}\right\|_{L^2}^2}{4\tau}.
	\end{align*} 
\end{lemma}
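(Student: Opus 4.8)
The statement to prove is Lemma~\ref{lem:relations}, which collects five algebraic identities for a generic sequence $U^n$ in a Hilbert space with inner product $(\cdot,\cdot)$. The plan is to verify each identity by direct substitution of the finite-difference definitions and elementary manipulation of inner products, exploiting bilinearity and the fact that $\mathrm{Re}$ and $\mathrm{Im}$ interact cleanly with the Hermitian symmetry $(u,v)=\overline{(v,u)}$. Since $\nabla$ and $L_z$ are linear operators, once the first identity is established at the level of $U^n$ it transfers verbatim to $\nabla U^n$ and $L_z U^n$; thus the last three identities (for $U^n$, $\nabla U^n$, $L_z U^n$ respectively) all follow from a single computation, and only the first two require separate attention.

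For the first identity, I would expand $\delta_{t\overline t}^2 U^n = \tau^{-2}(U^{n+1}-2U^n+U^{n-1})$ and $\widetilde U^n = \tfrac12(U^{n+1}+U^{n-1})$, form the inner product, and observe that $U^{n+1}-2U^n+U^{n-1} = (U^{n+1}-U^n)-(U^n-U^{n-1}) = \tau(\delta_t U^n - \delta_t U^{n-1})$. Taking imaginary parts and writing $\widetilde U^n = U^n + \tfrac{\tau}{2}(\delta_t U^n - \delta_{\overline t}U^n)$ — or more directly, noting that $\mathrm{Im}(\delta_t U^n - \delta_t U^{n-1}, U^{n+1}+U^{n-1})$ telescopes after discarding the purely real self-terms $\mathrm{Im}(w,w)=0$ and regrouping — yields the claimed telescoping quotient. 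The key cancellation is that cross terms of the form $\mathrm{Im}(\delta_t U^n, U^n)$ survive while terms like $\mathrm{Im}(\delta_t U^n, U^{n+1})+\mathrm{Im}(\delta_t U^{n-1},U^{n-1})$ recombine; one has to be slightly careful bookkeeping which index pairs with which, and this is the one place where a sign error is easy to make.

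The second identity is the "energy-type" analogue: here I take $\mathrm{Re}(\delta_{t\overline t}^2 U^n, \delta_{\hat t}U^n)$ with $\delta_{\hat t}U^n = \tfrac{1}{2\tau}(U^{n+1}-U^{n-1})$, and use that $U^{n+1}-U^{n-1} = \tau(\delta_t U^n + \delta_t U^{n-1})$ while $U^{n+1}-2U^n+U^{n-1} = \tau(\delta_t U^n - \delta_t U^{n-1})$. The product then becomes $\tfrac{1}{2\tau}\mathrm{Re}(\delta_t U^n - \delta_t U^{n-1}, \delta_t U^n + \delta_t U^{n-1})$, and the real part of $(a-b,a+b) = \|a\|^2 - \|b\|^2 + (a,b)-(b,a)$; since $(a,b)-(b,a) = 2\mathrm{i}\,\mathrm{Im}(a,b)$ is purely imaginary, its real part vanishes, leaving exactly $\tfrac{1}{2\tau}(\|\delta_t U^n\|_{L^2}^2 - \|\delta_t U^{n-1}\|_{L^2}^2)$. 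The third identity, $\mathrm{Re}(\widetilde U^n, \delta_{\hat t}U^n) = \tfrac{1}{4\tau}(\|U^{n+1}\|_{L^2}^2 - \|U^{n-1}\|_{L^2}^2)$, is the same trick with $a = U^{n+1}$, $b = U^{n-1}$: $\mathrm{Re}(\tfrac12(a+b), \tfrac{1}{2\tau}(a-b)) = \tfrac{1}{4\tau}\mathrm{Re}(a+b,a-b) = \tfrac{1}{4\tau}(\|a\|^2 - \|b\|^2)$ after discarding the imaginary cross term. Finally, identities four and five are obtained by replacing $U^n$ with $\nabla U^n$ and $L_z U^n$ in the third identity and noting that these operators commute with the difference operators $\widetilde{(\cdot)}$ and $\delta_{\hat t}$ and that $(\nabla\widetilde U^n,\nabla\delta_{\hat t}U^n)=(\nabla(\widetilde U^n),\widetilde{(\nabla U^n)})$-type rearrangements are immediate from linearity.

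There is no serious obstacle here; the only mild care needed is the index bookkeeping in the first identity and a consistent convention that real parts kill anti-Hermitian contributions $(a,b)-(b,a)$ while imaginary parts kill the Hermitian ones. Because the authors themselves say "their proofs are straightforward, we omit the detailed derivations," I would in fact present only a one-line indication of the mechanism for each identity rather than a full grind, emphasizing the common structure $\mathrm{Re}(a-b,a+b)=\|a\|^2-\|b\|^2$ and its variants, and the reduction of the last three lines to a single template.
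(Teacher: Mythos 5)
Your proposal is correct: the direct expansion of the difference operators, the identity $\mathrm{Re}(a-b,a+b)=\|a\|^2-\|b\|^2$ (and its $\mathrm{Re}(a+b,a-b)$ variant), the cancellation of Hermitian/anti-Hermitian cross terms under $\mathrm{Im}$/$\mathrm{Re}$, and the reduction of the last two identities to the third via linearity of $\nabla$ and $L_z$ all check out, including the index bookkeeping in the first identity. This is exactly the elementary verification the paper has in mind when it omits the proof as straightforward, so there is nothing to add.
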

 
\begin{thm}\label{thm-timediscrete-conservation}
	The time-discrete method \eqref{eqn:timediscrete} is conservative in the senses of the energy and charge:
	\begin{equation}\label{eqn-timediscrete-conservation}
	E^{n}\equiv E^1,\qquad Q^n\equiv Q^1, 
    \qquad 1\leq n\leq N,
	\end{equation}
	where 
	\begin{align}\label{thm-timediscrete-energy}
			E^n = &\epsilon^2\left\|\delta_t\Psi_\tau^{n-1}\right\|^2_{L^2}
		+\frac{\left\|\nabla \Psi_\tau^{n}\right\|_{L^2}^2+\left\|\nabla \Psi_\tau^{n-1}\right\|_{L^2}^2}{2}
		+\frac{\left\| \Psi_\tau^{n}\right\|_{L^2}^2+\left\| \Psi_\tau^{n-1}\right\|_{L^2}^2}{2 \epsilon^2}
		+\frac{\left(V\Psi_\tau^{n}, \Psi_\tau^{n}\right)+\left(V\Psi_\tau^{n-1}, \Psi_\tau^{n-1}\right)}{2}\nn\\
		&+\frac{\lambda}{4}\left(\|\Psi_\tau^{n}\|_{L^4}^4+\|\Psi_\tau^{n-1}\|_{L^4}^4\right)
		-\frac{\Omega^2\epsilon^2}{2}\left(\left\|L_z \Psi_\tau^{n}\right\|_{L^2}^2+\left\|L_z \Psi_\tau^{n-1}\right\|_{L^2}^2\right),
	\end{align}
	and 
		\begin{align}\label{thm-timediscrete-charge}
        Q^n = &\epsilon^2 \textup{Im}
		\left(\delta_t\Psi_\tau^{n-1}, \Psi_\tau^{n-1}\right)
		-\frac{\Omega\epsilon^2}{2}
		\left[\textup{Im}\left(\textup{i}L_z\Psi_\tau^{n}, \Psi_\tau^{n}\right)
		+\textup{Im}\left(\textup{i}L_z\Psi_\tau^{n-1}, \Psi_\tau^{n-1}\right)\right].
	\end{align}
\end{thm}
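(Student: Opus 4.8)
The plan is to prove the two one-step balance laws $E^{n+1}=E^n$ and $Q^{n+1}=Q^n$ separately for each $n$ with $1\le n\le N-1$, by testing the scheme \eqref{eqn:timediscrete} with two carefully chosen discrete quantities and extracting, respectively, its real and imaginary parts; the conservation statement \eqref{eqn-timediscrete-conservation} then follows by telescoping down to $n=1$. Throughout I would use that $\Psi_\tau^n\in H_0^1(U)$, so that $\delta_{\hat t}\Psi_\tau^n$ and $\widetilde\Psi_\tau^n$ also lie in $H_0^1(U)$ and integration by parts in the Laplacian term produces no boundary term, together with Lemmas~\ref{lem:xx} and~\ref{lem:relations}. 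Note that $E^n$ and $Q^n$ are precisely the running averages that turn the natural two-step telescoping quantities coming from the scheme into genuine one-step invariants.

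\textbf{Energy identity.} I would take the $L^2(U)$ inner product of \eqref{eqn:timediscrete} with $\delta_{\hat t}\Psi_\tau^n$ and take the real part. The inertial term $\epsilon^2\,\textup{Re}(\delta_{t\overline t}^2\Psi_\tau^n,\delta_{\hat t}\Psi_\tau^n)$, the Laplacian term (after integration by parts against $\delta_{\hat t}\Psi_\tau^n\in H_0^1(U)$), the mass term, the real-potential term (using that $V$ is real), and the centrifugal term (after writing $(L_z^2u,v)=(L_zu,L_zv)$ via Lemma~\ref{lem:xx}) are turned, by the second, fourth, third and fifth identities of Lemma~\ref{lem:relations}, into $\tfrac1{2\tau}$- or $\tfrac1{4\tau}$-multiples of exactly the telescoping differences occurring in $E^{n+1}-E^n$. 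For the cubic term, with $g^n:=\tfrac12(|\Psi_\tau^{n+1}|^2+|\Psi_\tau^{n-1}|^2)$ (real, pointwise), the pointwise identity $\textup{Re}\big((\Psi_\tau^{n+1}+\Psi_\tau^{n-1})\overline{(\Psi_\tau^{n+1}-\Psi_\tau^{n-1})}\big)=|\Psi_\tau^{n+1}|^2-|\Psi_\tau^{n-1}|^2$ gives $\lambda\,\textup{Re}(g^n\widetilde\Psi_\tau^n,\delta_{\hat t}\Psi_\tau^n)=\tfrac{\lambda}{4\tau}\int_U g^n\big(|\Psi_\tau^{n+1}|^2-|\Psi_\tau^{n-1}|^2\big)$, in which the mixed product $|\Psi_\tau^{n+1}|^2|\Psi_\tau^{n-1}|^2$ cancels, leaving $\tfrac{\lambda}{8\tau}\big(\|\Psi_\tau^{n+1}\|_{L^4}^4-\|\Psi_\tau^{n-1}\|_{L^4}^4\big)$. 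Finally the Coriolis term vanishes: by Lemma~\ref{lem:xx}, $(L_zu,u)=(u,L_zu)=\overline{(L_zu,u)}$ is real, so $(L_z\delta_{\hat t}\Psi_\tau^n,\delta_{\hat t}\Psi_\tau^n)\in\mathbb{R}$ and $\textup{Re}\big(-2\mathrm{i}\Omega\epsilon^2(L_z\delta_{\hat t}\Psi_\tau^n,\delta_{\hat t}\Psi_\tau^n)\big)=0$. Collecting the terms and multiplying by $2\tau$ gives exactly $E^{n+1}-E^n=0$.

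\textbf{Charge identity.} I would take the $L^2(U)$ inner product of \eqref{eqn:timediscrete} with $\widetilde\Psi_\tau^n$ and take the imaginary part. The Laplacian, mass, real-potential, cubic and centrifugal terms all yield real quantities ($\|\nabla\widetilde\Psi_\tau^n\|_{L^2}^2$, $\|\widetilde\Psi_\tau^n\|_{L^2}^2$, $\int_U V|\widetilde\Psi_\tau^n|^2$, $\int_U g^n|\widetilde\Psi_\tau^n|^2$, $\|L_z\widetilde\Psi_\tau^n\|_{L^2}^2$, the last via Lemma~\ref{lem:xx}), so their imaginary parts are zero. The inertial term gives, by the first identity of Lemma~\ref{lem:relations}, $\tfrac{\epsilon^2}{\tau}\big(\textup{Im}(\delta_t\Psi_\tau^n,\Psi_\tau^n)-\textup{Im}(\delta_t\Psi_\tau^{n-1},\Psi_\tau^{n-1})\big)$. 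For the Coriolis term, $\textup{Im}\big(-2\mathrm{i}\Omega\epsilon^2(L_z\delta_{\hat t}\Psi_\tau^n,\widetilde\Psi_\tau^n)\big)=-2\Omega\epsilon^2\,\textup{Re}(L_z\delta_{\hat t}\Psi_\tau^n,\widetilde\Psi_\tau^n)$; expanding $\delta_{\hat t}\Psi_\tau^n$ and $\widetilde\Psi_\tau^n$ and using Lemma~\ref{lem:xx}, the two off-diagonal terms satisfy $\textup{Re}(L_z\Psi_\tau^{n+1},\Psi_\tau^{n-1})=\textup{Re}(L_z\Psi_\tau^{n-1},\Psi_\tau^{n+1})$ and cancel, while $(L_z\Psi_\tau^{n\pm1},\Psi_\tau^{n\pm1})$ are real, leaving $-\tfrac{\Omega\epsilon^2}{2\tau}\big((L_z\Psi_\tau^{n+1},\Psi_\tau^{n+1})-(L_z\Psi_\tau^{n-1},\Psi_\tau^{n-1})\big)$. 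Recognizing $\textup{Im}(\mathrm{i}L_z\Psi,\Psi)=\textup{Re}(L_z\Psi,\Psi)=(L_z\Psi,\Psi)$, multiplication by $\tau$ yields exactly $Q^{n+1}-Q^n=0$.

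\textbf{Main obstacle.} The accounting of the $\tfrac1{4\tau}$ factors and the directly-telescoping linear terms is routine; the only genuinely delicate points are the cancellation of the mixed quartic term in the cubic nonlinearity and the cancellation of the off-diagonal $L_z$ contributions in the Coriolis term (together with the observation that $(L_zu,u)$ is real), which are exactly what let the rotation terms be reconciled with both invariants. Once the two one-step identities are available, summing over $n=1,\dots,N-1$ gives $E^n\equiv E^1$ and $Q^n\equiv Q^1$ for $1\le n\le N$, completing the proof.
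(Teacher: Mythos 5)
Your proposal is correct and follows essentially the same route as the paper: test \eqref{eqn:timediscrete} with $\delta_{\hat t}\Psi_\tau^n$ and take real parts for the energy, test with $\widetilde{\Psi}_\tau^n$ and take imaginary parts for the charge, using Lemmas~\ref{lem:xx}--\ref{lem:relations}, the vanishing of the Coriolis contribution, and the cancellation of the off-diagonal $L_z$ terms before telescoping. The only cosmetic differences are that you justify the vanishing/cancellation of the rotation terms via self-adjointness of $L_z$ where the paper argues via integration by parts and the ``$z+\bar z$ is real'' observation, and you spell out the quartic cancellation in the cubic term that the paper leaves implicit.
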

\begin{proof}
Taking the inner product of \eqref{eqn:timediscrete} with $\delta_{\hat{t}}\Psi_\tau^n$ and extracting the real part, we obtain
\begin{align}
	\label{thm-timediscrete-conservation-pf1}
	&\epsilon^2\text{Re}\left(\delta_{t\overline t}^2\Psi_\tau^n, \delta_{\hat{t}}\Psi_\tau^n\right)
	+\text{Re}\left(\nabla \widetilde{\Psi}_\tau^n, \nabla\delta_{\hat{t}}\Psi_\tau^n\right)
	+
	\frac{1}{\epsilon^2}\text{Re}\left(\widetilde{\Psi}_\tau^n, 
	\delta_{\hat{t}}\Psi_\tau^n\right)
	+\text{Re}\left(V\widetilde{\Psi}_\tau^n, \delta_{\hat{t}}\Psi_\tau^n\right)+\lambda\,\text{Re}\left(\frac{|\Psi^{n+1}_\tau|^2+|\Psi^{n-1}_\tau|^2}{2}\widetilde{\Psi}_\tau^n, \delta_{\hat{t}}\Psi_\tau^n\right)\nn\\
	&
	-2\Omega\epsilon^2\text{Re}\left(iL_z\delta_{\hat t}\Psi_\tau^n, \delta_{\hat t}\Psi_\tau^n\right) -\Omega^2\epsilon^2\text{Re}\left(L_z^2\widetilde{\Psi}_\tau^n, \delta_{\hat t}\Psi_\tau^n\right)=0.
\end{align}
Using Lemmas \ref{lem:xx}-\ref{lem:relations}, and thanks to 
\begin{align}\label{thm-timediscrete-conservation-pf2}
	\text{Re}\left(iL_z\delta_{\hat t}\Psi_\tau^n, \delta_{\hat t}\Psi_\tau^n\right)=\text{Re}\left(\left[x\partial_y-y\partial_x\right]\delta_{\hat t}\Psi_\tau^n, \delta_{\hat t}\Psi_\tau^n\right) = 0,
\end{align}
and 
\begin{align}\label{thm-timediscrete-conservation-pf3}
	&\text{Re}\left(L_z^2\widetilde{\Psi}_\tau^n, \delta_{\hat t}\Psi_\tau^n\right)
	=\text{Re}\left(L_z\widetilde{\Psi}_\tau^n, L_z\delta_{\hat t}\Psi_\tau^n\right)= \frac{\left\|L_z \Psi_\tau^{n+1}\right\|_{L^2}^2-\left\|L_z \Psi_\tau^{n-1}\right\|_{L^2}^2}{4\tau},
\end{align}
 we further obtain 
\begin{align}\label{thm-timediscrete-conservation-pf4}
&	\epsilon^2\frac{\left\|\delta_t\Psi_\tau^{n}\right\|^2_{L^2}-\left\|\delta_t\Psi_\tau^{n-1}\right\|^2_{L^2}}{2\tau}
	+\frac{\left\|\nabla \Psi_\tau^{n+1}\right\|_{L^2}^2-\left\|\nabla \Psi_\tau^{n-1}\right\|_{L^2}^2}{4\tau}
+\frac{\left\| \Psi_\tau^{n+1}\right\|_{L^2}^2-\left\| \Psi_\tau^{n-1}\right\|_{L^2}^2}{4\tau \epsilon^2}
+\frac{\left(V\Psi_\tau^{n+1}, \Psi_\tau^{n+1}\right)-\left(V\Psi_\tau^{n-1}, \Psi_\tau^{n-1}\right)}{4\tau}\nn\\
&+\lambda\frac{\|\Psi_\tau^{n+1}\|_{L^4}^4-\|\Psi_\tau^{n-1}\|_{L^4}^4}{8\tau}
-\Omega^2\epsilon^2\frac{\left\|L_z \Psi_\tau^{n+1}\right\|_{L^2}^2-\left\|L_z \Psi_\tau^{n-1}\right\|_{L^2}^2}{4\tau}=0,
\end{align}
which yields the energy conservation $E^n\equiv E^1$ for $1\leq n\leq N$.

Additionally, taking the inner product of \eqref{eqn:timediscrete} with $\widetilde{\Psi}_\tau^n$, and selecting the imaginary parts of both sides, we obtain 
\begin{align}
	\label{thm-timediscrete-conservation-pf5}
	&\epsilon^2\text{Im}\left(\delta_{t\overline t}^2\Psi_\tau^n, \widetilde{\Psi}_\tau^n\right)
	-2\Omega \epsilon^2\text{Im}\left(\text{i}L_z\delta_{\hat{t}}\Psi_\tau^n, \widetilde{\Psi}_\tau^n\right)
	-\Omega^2\epsilon^2\text{Im}\left(L_z^2\widetilde{\Psi}_\tau^n, \widetilde{\Psi}_\tau^n\right)=0.
\end{align}
By using integration by parts, we have 
\begin{align}\label{thm-timediscrete-conservation-pf6}
 \text{Im}\left(\text{i}L_z\delta_{\hat{t}}\Psi_\tau^n, \widetilde{\Psi}_\tau^n\right) 
 &=\frac{1}{4\tau}
 \text{Im}\left(\text{i}L_z\left(\Psi_\tau^{n+1}-\Psi_\tau^{n-1}\right), \Psi_\tau^{n+1}+\Psi_\tau^{n-1}\right)\nn\\
 &=\frac{1}{4\tau} \left[ 
 \text{Im}\left(\text{i}L_z\Psi_\tau^{n+1}, \Psi_\tau^{n+1}\right)-
 \text{Im}\left(\text{i}L_z\Psi_\tau^{n-1}, \Psi_\tau^{n-1}\right)+
 \text{Im}\left(\text{i}L_z\Psi_\tau^{n+1}, \Psi_\tau^{n-1}\right)-
 \text{Im}\left(\text{i}L_z\Psi_\tau^{n-1}, \Psi_\tau^{n+1}\right)  \right] \nn\\
 &=\frac{1}{4\tau}\left[ 
 \text{Im}\left(\text{i}L_z\Psi_\tau^{n+1}, \Psi_\tau^{n+1}\right)-
 \text{Im}\left(\text{i}L_z\Psi_\tau^{n-1}, \Psi_\tau^{n-1}\right)\right]+
 \frac{1}{4\tau}\text{Im}\left[\left(\text{i}L_z\Psi_\tau^{n+1}, \Psi_\tau^{n-1}\right)+\left(\Psi_\tau^{n-1}, \text{i}L_z\Psi_\tau^{n+1}\right)\right]\nn\\
 &=\frac{1}{4\tau} \left[ 
 \text{Im}\left(\text{i}L_z\Psi_\tau^{n+1}, \Psi_\tau^{n+1}\right)-
 \text{Im}\left(\text{i}L_z\Psi_\tau^{n-1}, \Psi_\tau^{n-1}\right) \right]. 
\end{align}
Furthermore, using Lemma \ref{lem:xx}, we derive 
\begin{align}\label{thm-timediscrete-conservation-pf7}
  \text{Im}\left(L_z^2\widetilde{\Psi}_\tau^n, \widetilde{\Psi}_\tau^n\right)
  =\text{Im}\left(L_z\widetilde{\Psi}_\tau^n, L_z\widetilde{\Psi}_\tau^n\right) =0. 
\end{align}
Taking \eqref{thm-timediscrete-conservation-pf6} and \eqref{thm-timediscrete-conservation-pf7} into \eqref{thm-timediscrete-conservation-pf5} and from Lemma \ref{lem:relations}, we obtain 
\begin{align}
\epsilon^2	\frac{\text{Im} \left(\delta_t\Psi_\tau^n, \Psi_\tau^n\right)-\text{Im}\left(\delta_t\Psi_\tau^{n-1}, \Psi_\tau^{n-1}\right)}{\tau}
-
\frac{\Omega \epsilon^2}{2\tau}\left[
\text{Im}\left(\text{i}L_z\Psi_\tau^{n+1}, \Psi_\tau^{n+1}\right)-
\text{Im}\left(\text{i}L_z\Psi_\tau^{n-1}, \Psi_\tau^{n-1}\right)
\right]=0, 
\end{align}
which yields the charge conservation $Q^n\equiv Q^1$ for $1\leq n\leq N$. 
\end{proof}

\subsection{The full-discrete method}
Suppose that $U$ is a rectangular region in the $(x,y)$ plane with edges aligned parallel to the coordinate axes. Let $\mathscr{T}_{h}$  represent a regular rectangular subdivision of $U$, and set  $K\in\mathscr{T}_{h}$ and $h=\max\limits_{K}diam(K)$. We consider the following two types of elements: 
\begin{itemize}
	\item Conforming FE space:
	\begin{align}
		V_{h}^C:=\left\{v_h\in H_0^1(U); v_h|_{K}\in Q_{11}(K), \forall~ K\in \mathscr{T}_{h}\right\},
	\end{align}
where $Q_{11}(K)=\operatorname{span}\{1, x, y, xy\}$. 
	\item Nonconforming FE space ($EQ_1^{rot}$):
 \begin{align}
 	V_{h}^{NC}:=\left\{v_h\in L^2(U); v_h|_{K}\in \operatorname{span}\{1, x, y, x^2, y^2\}, \int_F[v_h]\text{d}s=0, F\subset\partial K, \forall~ K\in \mathscr{T}_{h}\right\},
 \end{align}
where 
$[v_h]$ means the jump of $v_h$ across the edge $F$ if $F$ is an internal edge, and $v_h$ itself if $F$ is a boundary edge.  
\end{itemize}

\textbf{Weak formulation}: Given initial data $\psi_0,~\psi_1\in H_0^1(U):=H_0^1(U; \mathbb C)$, find a wave function $\Psi\in L^\infty(J, H_0^1(U))$, such that 
\begin{align}\label{eqn:model-vf}
\epsilon^2\left(\partial_{tt}\Psi, \omega\right)
+
\left(\nabla\Psi, \nabla\omega\right)
+\frac{1}{\epsilon^2}
\left(\Psi, \omega\right)
+\left(\left[V+\lambda|\Psi|^2\right]\Psi, \omega\right)
-2\textup{i}\Omega \epsilon^2\left(L_z\partial_t\Psi, \omega\right)
-\Omega^2\epsilon^2\left(L_z\Psi, L_z\omega\right)=0
,
\quad\forall~ \omega\in H_0^1(U).
\end{align}


Next,
we uniformly define \( V_h \) as the finite element spaces utilized within the numerical schemes, where \( V_h = V_h^C \) denotes the conforming case and \( V_h = V_h^{NC} \) denotes the nonconforming case, respectively. We define the inner product and corresponding norms piecewisely, as specified by
\begin{equation*}
(\phi, \psi)_h:=\sum_K\int_K	\phi\cdot\overline{\psi} \text{d}\mathbf x,\quad \|\phi\|_{h}=\sqrt{(\phi, \phi)_h},
\quad |\phi|_{1,h}:=\left(\sum_K|\phi|_{1, K}^2\right)^{1/2},
\quad \|\phi\|_{1,h}:=\|\phi\|_{h}+|\phi|_{1,h},
\end{equation*}
and 
\begin{align*}
\left\langle\phi, \psi\right\rangle:=\sum_K\int_{\partial K}(\phi\cdot\bar\psi)(\mathbf x\cdot\mathbf n^\perp)\text{d}s \quad \text{with}
\quad \mathbf n^\perp=(n_y, -n_x). 
\end{align*}
For the following theoretical analysis, we define the Ritz projection $R_h: H^1_0(U)\rightarrow V_h$, such that 
\begin{align}\label{eqn:ritz}
   \left(\nabla v-\nabla R_hv, \nabla \omega_h\right)_h=0,\qquad \forall~ \omega_h\in V_h.
\end{align}
There exists a generic $h$-independent constant $C_{R_h}$, such that 
\begin{align}
\label{eqn:ritz_error}
 	\|v-R_hv\|_{L^2}+h\|v-R_hv\|_{1,h}\leq C_{R_h}h^s\|v\|_{H^s},\qquad s=1, 2, \quad\text{for~all~}v\in H_0^1(U)\cap H^2(U).
\end{align}
Let the four vertices of an element $K$ be denoted by $a_i$, and its edges by $l_i = a_i a_{i+1}$ for $i = 1,\dots,4 \pmod{4}$. The interpolation operator $I_h$ is then defined by
\begin{itemize}
  \item For the case of $V_h = V_h^C$, define the associated interpolation operator $I_h$ as:
  \[
  I_h : u \in H^2(\Omega) \rightarrow I_h u \in V_h, \quad I_h u|_K = I_K, \quad I_K u(a_i) = u(a_i), \quad i = 1 \sim 4 \pmod{4};
  \]

  \item For the case of $V_h = V_h^{NC}$, define the associated interpolation operator $I_h$ as:
  \[
  I_h : u \in H^1(\Omega) \rightarrow I_h u \in V_h, \quad I_h u|_K = I_K,
  \]
  \[
  \int_{l_i} (I_K u - u) \, \text{d}s = 0, \quad i = 1 \sim 4 \pmod{4} \quad \text{and} \quad \int_K (I_K u - u) \, \text{d}\mathbf{x} = 0.
  \]
\end{itemize}
Then, the interpolation operator $I_h$ satisfies \cite{brenner2008mathematical, shi2017unconditional}:
\begin{equation}\label{eqn:Ih_error}
\left\|v-I_h v\right\|_{L^2}\leq C_{I_h}h^2\|v\|_{H^2}\qquad \text{and}\qquad 
\left(\nabla(v-I_h v), \nabla  \omega_h\right)_h= 
\left\{
\begin{aligned}
O(h^2)\|v\|_{H^3}\|\omega_h\|_{1,h},\qquad&\text{for}~\omega_h\in V_h^C,\\
	0,~~~~~~~\qquad&\text{for}~\omega_h\in V_h^{NC}.
\end{aligned}
\right.
\end{equation}

With above preparation, based on the weak formulation $\eqref{eqn:model-vf}$, we introduce the following full-discrete method for the RKG equation \eqref{eq:model2}, which is established by using the second-order difference method in time and the FEMs in space. 

\begin{myDef}\label{Def:fully}
	(Full-discrete method for the RKG equation) Let 
	\begin{align}\label{eq:fulldiscrete_initial}
		\Psi_{h, \tau}^0:=I_h\psi_0,\qquad \Psi_{h, \tau}^1=I_h\left(\psi_0+\frac{\tau}{\epsilon^2}\psi_1+\frac{\tau^2}{2\epsilon^2}\left[\Delta \psi_0-
		\frac{1}{\epsilon^2}\psi_0
		-\left(V+\lambda\,|\psi_0|^2\right)\psi_0
		+2\textup{i}\Omega L_z\psi_1+\Omega^2\epsilon^2L_z^2\psi_0\right]\right).
	\end{align}
	Then for $n\geq 1$, we define the following full-discrete method, which is to find $\Psi_{h,\tau}^{n+1}\in V_h$, $1\leq n\leq N-1$ such that 
	\begin{align}\label{eqn:fullydiscrete}
			&\epsilon^2\left(\delta_{t\overline t}^2\Psi_{h,\tau}^n
			, \omega_h\right)
			+\left(\nabla \widetilde{\Psi}_{h,\tau}^n, \nabla \omega_h\right)_h
			+
			\frac{1}{\epsilon^2}\left(\widetilde{\Psi}_{h,\tau}^n, \omega_h\right)
			+
			\left(V\widetilde{\Psi}_{h,\tau}^n, \omega_h\right)
			+\lambda\,\left(\frac{\left|\Psi^{n+1}_{h,\tau}\right|^2				
				+\left|\Psi^{n-1}_{h,\tau}\right|^2}{2}\widetilde{\Psi}_{h,\tau}^n, \omega_h\right)\nn\\
			&-2\textup{i}\Omega \epsilon^2\left(L_z\delta_{\hat{t}}\Psi_{h,\tau}^n, \omega_h\right)_h-
			\Omega^2\epsilon^2\left(L_z\widetilde{\Psi}_{h,\tau}^n, L_z\omega_h\right)_h
	+\left\langle S^{n+1}, \omega_h\right\rangle=0,
	\qquad \forall~ \omega_h\in V_h,
	\end{align}
 where $\left\langle S^{n+1}, \omega_h\right\rangle$ is defined by 
\begin{equation}\label{eqn:fullydiscrete-stability}
\left\langle S^{n+1}, \omega_h\right\rangle:=
\begin{cases}
0,\qquad &\text{for conforming case,}\\
\Omega\epsilon^2\textup{Re}\left\langle\delta_{\hat{t}}\Psi_{h,\tau}^n, \omega_h\right\rangle
+\textup{i}\frac{\Omega\epsilon^2}{\tau}\textup{Im}\left\langle\Psi_{h,\tau}^{n+1}, \omega_h\right\rangle
,&\text{for nonconforming case.}\\
\end{cases}
\end{equation}
\end{myDef}

We will establish the well-posedness and convergence of the system \eqref{eqn:fullydiscrete} in Section \ref{sec4}. The following theorem demonstrates that the full-discrete method \eqref{eqn:fullydiscrete} preserves both total energy and charge conservation.

\begin{thm}\label{thm-fullydiscrete-conservation} (Energy and charge conservation)
	The full-discrete method \eqref{eqn:fullydiscrete} is conservative in the senses of total discrete energy and charge:
	\begin{equation}\label{eqn-fullydiscrete-conservation}
	E_h^{n}\equiv E_h^1,\qquad Q_h^n\equiv Q_h^1\qquad 1\leq n\leq N,
	\end{equation}
	where 
	\begin{align}\label{eqn-discrete-energy}
	E_h^n := &\epsilon^2\left\|\delta_t\Psi_{h,\tau}^{n-1}\right\|^2_{L^2}
	+\frac{\left|\Psi_{h,\tau}^{n}\right|_{1,h}^2+\left| \Psi_{h,\tau}^{n-1}\right|_{1,h}^2}{2}
	+\frac{\left\| \Psi_{h,\tau}^{n}\right\|_{L^2}^2+\left\| \Psi_{h,\tau}^{n-1}\right\|_{L^2}^2}{2 \epsilon^2}
	+\frac{\left(V\Psi_{h,\tau}^{n}, \Psi_{h,\tau}^{n}\right)+\left(V\Psi_{h,\tau}^{n-1}, \Psi_{h,\tau}^{n-1}\right)}{2}\nn\\
	&+\frac{\lambda}{4}\left(\left\|\Psi_{h,\tau}^{n}\right\|_{L^4}^4+\left\|\Psi_{h,\tau}^{n-1}\right\|_{L^4}^4\right)
	-\frac{\Omega^2\epsilon^2}{2}\left(\left\|L_z \Psi_{h,\tau}^{n}\right\|_{h}^2+\left\|L_z \Psi_{h,\tau}^{n-1}\right\|_{h}^2\right),
\end{align}
	and 
	\begin{align}
    Q_h^n := &\epsilon^2 \textup{Im}
		\left(\delta_t\Psi_{h,\tau}^{n-1}, \Psi_{h,\tau}^{n-1}\right)_h
		-\frac{\Omega\epsilon^2}{2}
		\left[\textup{Im}\left(\textup{i}L_z\Psi_{h,\tau}^{n}, \Psi_{h,\tau}^{n}\right)_h
		+\textup{Im}\left(\textup{i}L_z\Psi_{h,\tau}^{n-1}, \Psi_{h,\tau}^{n-1}\right)_h\right].
	\end{align}

\end{thm}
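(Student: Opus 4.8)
The proof follows the same two-test-function strategy as Theorem~\ref{thm-timediscrete-conservation}: test the scheme~\eqref{eqn:fullydiscrete} against $\delta_{\hat t}\Psi_{h,\tau}^n$ and take real parts to obtain the energy balance, and against $\widetilde{\Psi}_{h,\tau}^n$ and take imaginary parts to obtain the charge balance. In the conforming case ($S^{n+1}\equiv 0$, $(\cdot,\cdot)_h=(\cdot,\cdot)$, $\langle\cdot,\cdot\rangle\equiv 0$) the computation is a verbatim transcription of \eqref{thm-timediscrete-conservation-pf1}--\eqref{thm-timediscrete-conservation-pf7} with the piecewise inner product, so the only genuinely new point is the nonconforming case: the rotational terms now generate element-boundary contributions through the form $\langle\cdot,\cdot\rangle$, and one must check that these are cancelled \emph{exactly} by the stabilization term $\langle S^{n+1},\cdot\rangle$.

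First I would record the elementary facts. The identities of Lemma~\ref{lem:relations} are purely algebraic in whichever Hermitian sesquilinear form is inserted, so they hold verbatim with $(\cdot,\cdot)$ replaced by $(\cdot,\cdot)_h$ and with $\|\nabla\cdot\|_{L^2}$, $\|L_z\cdot\|_{L^2}$ replaced by $|\cdot|_{1,h}$, $\|L_z\cdot\|_h$; the weighted variants for the real potential $V$ and for the real density $\tfrac12(|\Psi_{h,\tau}^{n+1}|^2+|\Psi_{h,\tau}^{n-1}|^2)$ follow the same way, the latter with the bookkeeping that the mixed integrals $\int|\Psi_{h,\tau}^{n+1}|^2|\Psi_{h,\tau}^{n-1}|^2$ cancel, so that the nonlinearity contributes precisely $\tfrac{\lambda}{8\tau}(\|\Psi_{h,\tau}^{n+1}\|_{L^4}^4-\|\Psi_{h,\tau}^{n-1}\|_{L^4}^4)$. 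In place of Lemma~\ref{lem:xx} I would prove its boundary-corrected version: since $\textup{i}L_z=x\partial_y-y\partial_x$ is a \emph{real} first-order operator whose coefficient field $(-y,x)$ is divergence-free, elementwise integration by parts gives
\begin{align*}
(\textup{i}L_z u,v)_h+(u,\textup{i}L_z v)_h=\langle u,v\rangle
\qquad\text{and}\qquad
\mathrm{Re}\,(\textup{i}L_z w,w)_h=\tfrac12\langle w,w\rangle\in\mathbb R;
\end{align*}
in the conforming case $\langle\cdot,\cdot\rangle$ vanishes (interior edges cancel pairwise because $\mathbf n^\perp$ reverses sign across an edge, boundary edges vanish because $V_h^C\subset H_0^1(U)$), which recovers Lemma~\ref{lem:xx}.

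For the energy, testing \eqref{eqn:fullydiscrete} with $\delta_{\hat t}\Psi_{h,\tau}^n$ and taking real parts, every non-rotational term collapses to a discrete time difference exactly as in \eqref{thm-timediscrete-conservation-pf4} (now with $|\cdot|_{1,h}$ and $\|L_z\cdot\|_h$), the centrifugal term $-\Omega^2\epsilon^2(L_z\widetilde{\Psi}_{h,\tau}^n,L_z\delta_{\hat t}\Psi_{h,\tau}^n)_h$ being already in the $L_z$-paired form and hence contributing $-\Omega^2\epsilon^2\tfrac{\|L_z\Psi_{h,\tau}^{n+1}\|_h^2-\|L_z\Psi_{h,\tau}^{n-1}\|_h^2}{4\tau}$. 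The Coriolis term gives $-2\Omega\epsilon^2\,\mathrm{Re}\,(\textup{i}L_z\delta_{\hat t}\Psi_{h,\tau}^n,\delta_{\hat t}\Psi_{h,\tau}^n)_h=-\Omega\epsilon^2\langle\delta_{\hat t}\Psi_{h,\tau}^n,\delta_{\hat t}\Psi_{h,\tau}^n\rangle$, while only the $\mathrm{Re}\langle\cdot,\cdot\rangle$-part of $S^{n+1}$ survives the real part, giving $\mathrm{Re}\,\langle S^{n+1},\delta_{\hat t}\Psi_{h,\tau}^n\rangle=\Omega\epsilon^2\langle\delta_{\hat t}\Psi_{h,\tau}^n,\delta_{\hat t}\Psi_{h,\tau}^n\rangle$ (the $\textup{i}\,\mathrm{Im}\langle\Psi_{h,\tau}^{n+1},\cdot\rangle$-part is purely imaginary and drops); these cancel, and multiplying by $2\tau$ and telescoping gives $E_h^{n}\equiv E_h^1$. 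For the charge, testing \eqref{eqn:fullydiscrete} with $\widetilde{\Psi}_{h,\tau}^n$ and taking imaginary parts, all Hermitian terms ($|\cdot|_{1,h}^2$, $\tfrac1{\epsilon^2}\|\cdot\|^2$, $(V\cdot,\cdot)$, the nonlinearity, $\|L_z\cdot\|_h^2$) vanish, the $\epsilon^2\delta_{t\overline t}^2$-term produces the difference of $\mathrm{Im}(\delta_t\Psi_{h,\tau}^{n-1},\Psi_{h,\tau}^{n-1})$ by Lemma~\ref{lem:relations}, and the Coriolis term expands as in \eqref{thm-timediscrete-conservation-pf6}, except that the two ``cross'' terms no longer cancel but recombine, via the boundary-corrected identity, into a surviving $\tfrac1{4\tau}\mathrm{Im}\langle\Psi_{h,\tau}^{n+1},\Psi_{h,\tau}^{n-1}\rangle$; this is matched exactly by $\mathrm{Im}\,\langle S^{n+1},\widetilde{\Psi}_{h,\tau}^n\rangle=\tfrac{\Omega\epsilon^2}{2\tau}\mathrm{Im}\langle\Psi_{h,\tau}^{n+1},\Psi_{h,\tau}^{n-1}\rangle$ (using that $\langle\Psi_{h,\tau}^{n+1},\Psi_{h,\tau}^{n+1}\rangle$ is real), leaving $Q_h^{n}\equiv Q_h^1$.

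The main obstacle is exactly this cancellation bookkeeping: one must verify that the boundary form generated by elementwise integration by parts of the Coriolis operator is \emph{the same} form $\langle\cdot,\cdot\rangle$ that appears in $S^{n+1}$, with the correct constant, and that the splitting of $S^{n+1}$ into a $\mathrm{Re}\langle\cdot,\cdot\rangle$-part and an $\textup{i}\,\mathrm{Im}\langle\cdot,\cdot\rangle$-part is arranged so that the former annihilates the energy obstruction $\langle\delta_{\hat t}\Psi_{h,\tau}^n,\delta_{\hat t}\Psi_{h,\tau}^n\rangle$ while being invisible to the charge balance (it is real), and the latter annihilates the charge obstruction $\mathrm{Im}\langle\Psi_{h,\tau}^{n+1},\Psi_{h,\tau}^{n-1}\rangle$ while being invisible to the energy balance (it is purely imaginary). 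This ``orthogonality'' of the two corrections is precisely what a naive nonconforming discretization lacks; everything else is the algebra already carried out for the time-discrete scheme.
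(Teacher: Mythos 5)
Your proposal is correct and follows essentially the same route as the paper: test with $\delta_{\hat t}\Psi_{h,\tau}^n$ (real part) and $\widetilde{\Psi}_{h,\tau}^n$ (imaginary part), use the discrete identities of Lemma~\ref{lem:relations} in the piecewise inner product, and cancel the element-boundary contributions of the Coriolis term against the two pieces of $\langle S^{n+1},\cdot\rangle$ exactly as in \eqref{eqn:fullydiscrete_ener_pf2} and \eqref{eqn-fullydiscrete-conservation-proof3}--\eqref{eqn-fullydiscrete-conservation-proof4}. Your only presentational difference is to isolate the elementwise integration-by-parts identity $(\textup{i}L_z u,v)_h+(u,\textup{i}L_z v)_h=\langle u,v\rangle$ as a standalone lemma before using it, whereas the paper performs the same computation inline; the substance, constants, and cancellations agree.
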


\begin{proof}
The results for the conforming scenario are straightforward, so we focus on providing proof details specifically for the nonconforming case.
By taking $\omega_h=\delta_{\hat{t}}\Psi_{h, \tau}^n$ in \eqref{eqn:fullydiscrete}, and selecting the real parts of both sides, we obtain 
	\begin{align}\label{eqn:fullydiscrete_ener_pf1}
	&\epsilon^2\text{Re}\left(\delta_{t\overline t}^2\Psi_{h,\tau}^n
	, \delta_{\hat{t}}\Psi_{h, \tau}^n\right)
	+\text{Re}\left(\nabla \widetilde{\Psi}_{h,\tau}^n, \nabla \delta_{\hat{t}}\Psi_{h, \tau}^n\right)_h
	+
	\frac{1}{\epsilon^2}\text{Re}\left(\widetilde{\Psi}_{h,\tau}^n, \delta_{\hat{t}}\Psi_{h, \tau}^n\right)
	+
	\text{Re}\left(V\widetilde{\Psi}_{h,\tau}^n, \delta_{\hat{t}}\Psi_{h, \tau}^n\right)
	\nn\\
	&+\lambda\,\text{Re}\left(\frac{|\Psi^{n+1}_{h,\tau}|^2				
		+|\Psi^{n-1}_{h,\tau}|^2}{2}\widetilde{\Psi}_{h,\tau}^n, \delta_{\hat{t}}\Psi_{h, \tau}^n\right)
		-2\Omega \epsilon^2 \text{Re}\left(\text{i}L_z\delta_{\hat{t}}\Psi_{h,\tau}^n, \delta_{\hat{t}}\Psi_{h, \tau}^n\right)_h\nn\\
	&-
	\Omega^2\epsilon^2\text{Re}\left(L_z\widetilde{\Psi}_{h,\tau}^n, L_z\delta_{\hat{t}}\Psi_{h, \tau}^n\right)_h+\text{Re}\left\langle S^{n+1}, \delta_{\hat{t}}\Psi_{h, \tau}^n\right\rangle=0.
\end{align}
Using the integration by parts, we have 
\begin{align}\label{eqn:fullydiscrete_ener_pf2}
	&-2\Omega \epsilon^2 \text{Re}\left(\text{i}L_z\delta_{\hat{t}}\Psi_{h,\tau}^n, \delta_{\hat{t}}\Psi_{h, \tau}^n\right)_h=-2\Omega \epsilon^2 \text{Re}\left(\left[x\partial_y-y\partial_x\right]\delta_{\hat{t}}\Psi_{h,\tau}^n, \delta_{\hat{t}}\Psi_{h, \tau}^n\right)_h\nn\\
	&=-\Omega \epsilon^2
	\bigg[\left(\left[x\partial_y-y\partial_x\right]\delta_{\hat{t}}\Psi_{h,\tau}^n, \delta_{\hat{t}}\Psi_{h, \tau}^n\right)_h+
	\left(\delta_{\hat{t}}\Psi_{h,\tau}^n, \left[x\partial_y-y\partial_x\right]\delta_{\hat{t}}\Psi_{h, \tau}^n\right)_h\bigg]\nn\\
	&=-\Omega\epsilon^2\left\langle\delta_{\hat{t}}\Psi_{h,\tau}^n, \delta_{\hat{t}}\Psi_{h,\tau}^n\right\rangle
	=-\text{Re}\left\langle S^{n+1}, \delta_{\hat{t}}\Psi_{h, \tau}^n\right\rangle. 
\end{align}
From Lemma \ref{lem:relations} and thanks to \eqref{eqn:fullydiscrete_ener_pf2}, we can obtain 
\begin{align}\label{eqn:fullydiscrete_ener_pf3}
	&	\epsilon^2\frac{\left\|\delta_t\Psi_{h,\tau}^{n}\right\|^2_{L^2}-\left\|\delta_t\Psi_{h,\tau}^{n-1}\right\|^2_{L^2}}{2\tau}
	+\frac{\left\| \Psi_{h,\tau}^{n+1}\right\|_{1,h}^2-\left\|\Psi_\tau^{n-1}\right\|_{1,h}^2}{4\tau}
	+\frac{\left\| \Psi_{h,\tau}^{n+1}\right\|_{L^2}^2-\left\| \Psi_{h,\tau}^{n-1}\right\|_{L^2}^2}{4\tau \epsilon^2}
	+\frac{\left(V\Psi_\tau^{n+1}, \Psi_\tau^{n+1}\right)-\left(V\Psi_\tau^{n-1}, \Psi_\tau^{n-1}\right)}{4\tau}\nn\\
	&+\lambda\frac{\|\Psi_{h,\tau}^{n+1}\|_{L^4}^4-\|\Psi_{h,\tau}^{n-1}\|_{L^4}^4}{8\tau}
	-\Omega^2\epsilon^2\frac{\left\|L_z \Psi_{h,\tau}^{n+1}\right\|_{h}^2-\left\|L_z \Psi_{h,\tau}^{n-1}\right\|_{h}^2}{4\tau}=0,
\end{align}
which yields the energy conservation $E_h^n\equiv E_h^1$ for $1\leq n\leq N$.

Taking $\omega_h = \widetilde{\Psi}_{h,\tau}^n$ in \eqref{eqn:fullydiscrete} and extracting the imaginary part, we obtain
\begin{align}\label{eqn-fullydiscrete-conservation-proof1}
 \epsilon^2\text{Im}\left(\delta_{t\overline t}^2\Psi_{h,\tau}^n
 , \widetilde{\Psi}_{h,\tau}^n\right)
 -2\Omega\epsilon^2 \text{Im}\left\{\left(\text{i}L_z\delta_{\hat{t}}\Psi_{h,\tau}^n, \widetilde{\Psi}_{h,\tau}^n\right)_h\right\}
 	+\text{Im}\left\langle S^{n+1}, \widetilde{\Psi}_{h,\tau}^n\right\rangle=0.
\end{align}
From Lemma \ref{lem:relations}, we have 
\begin{align}\label{eqn-fullydiscrete-conservation-proof2}
	 \epsilon^2\text{Im}\left(\delta_{t\overline t}^2\Psi_{h,\tau}^n
	, \widetilde{\Psi}_{h,\tau}^n\right)= \epsilon^2\frac{\text{Im} \left(\delta_t\Psi_{h,\tau}^n, \Psi_{h,\tau}^n\right)-\text{Im}\left(\delta_t\Psi_{h,\tau}^{n-1}, \Psi_{h,\tau}^{n-1}\right)}{\tau}.
\end{align}
Obviously, there holds 
\begin{align}\label{eqn-fullydiscrete-conservation-proof3}
	& -2\Omega\epsilon^2 \text{Im}\left\{\left(\text{i}L_z\delta_{\hat{t}}\Psi_{h,\tau}^n, \widetilde{\Psi}_{h,\tau}^n\right)_h\right\}\nn\\
	 &=-\frac{\Omega\epsilon^2}{2\tau}
	 \bigg[\text{Im}\left(\left[x\partial_y-y\partial_x\right]\Psi_{h,\tau}^{n+1}, \Psi_{h,\tau}^{n+1}\right)_h-\text{Im}\left(\left[x\partial_y-y\partial_x\right]\Psi_{h,\tau}^{n-1}, \Psi_{h,\tau}^{n-1}\right)_h\bigg]\nn\\
	 &~~~~-\frac{\Omega\epsilon^2}{2\tau}
	 \bigg[\text{Im}\left(\left[x\partial_y-y\partial_x\right]\Psi_{h,\tau}^{n+1}, \Psi_{h,\tau}^{n-1}\right)_h-\text{Im}\left(\left[x\partial_y-y\partial_x\right]\Psi_{h,\tau}^{n-1}, \Psi_{h,\tau}^{n+1}\right)_h\bigg] \nn \\
     &=-\frac{\Omega\epsilon^2}{2\tau}
	 \bigg[\text{Im}\left(\text{i}L_z\Psi_{h,\tau}^{n+1}, \Psi_{h,\tau}^{n+1}\right)_h-\text{Im}\left(\text{i}L_z\Psi_{h,\tau}^{n-1}, \Psi_{h,\tau}^{n-1}\right)_h\bigg]\nn\\
	 &~~~~-\frac{\Omega\epsilon^2}{2\tau}
	 \bigg[\text{Im}\left(\left[x\partial_y-y\partial_x\right]\Psi_{h,\tau}^{n+1}, \Psi_{h,\tau}^{n-1}\right)_h-\text{Im}\left(\left[x\partial_y-y\partial_x\right]\Psi_{h,\tau}^{n-1}, \Psi_{h,\tau}^{n+1}\right)_h\bigg].
\end{align}
By the definition of $S^{n+1}$ given in \eqref{eqn:fullydiscrete-stability} and using the integration by parts, we have 
\begin{align}\label{eqn-fullydiscrete-conservation-proof4}
&	\text{Im}\left\langle S^{n+1}, \widetilde{\Psi}_{h,\tau}^n\right\rangle
	=\frac{\Omega\epsilon^2}{\tau}\text{Im}\left\langle\Psi_{h, \tau}^{n+1}, \widetilde{\Psi}_{h,\tau}^n\right\rangle\nn\\
&~~~	=\frac{\Omega\epsilon^2}{2\tau}\text{Im}\left\langle\Psi_{h,\tau}^{n+1}, {\Psi}_{h,\tau}^{n+1}\right\rangle
+
\frac{\Omega\epsilon^2}{2\tau}\text{Im}\left\langle\Psi_{h,\tau}^{n+1}, {\Psi}_{h,\tau}^{n-1}\right\rangle
=\frac{\Omega\epsilon^2}{2\tau}\text{Im}\left\langle\Psi_{h,\tau}^{n+1}, {\Psi}_{h,\tau}^{n-1}\right\rangle\nn\\
&~~~=\frac{\Omega\epsilon^2}{2\tau}
\bigg[\text{Im}\left(\left[x\partial_y-y\partial_x\right]\Psi_{h,\tau}^{n+1}, \Psi_{h,\tau}^{n-1}\right)_h+\text{Im}\left(\left[x\partial_y-y\partial_x\right]\Psi_{h,\tau}^{n-1}, \Psi_{h,\tau}^{n+1}\right)_h\bigg].
\end{align}
Taking \eqref{eqn-fullydiscrete-conservation-proof2}, \eqref{eqn-fullydiscrete-conservation-proof3} and \eqref{eqn-fullydiscrete-conservation-proof4} into 
\eqref{eqn-fullydiscrete-conservation-proof1} gives that 
\begin{align*}
	\epsilon^2\frac{\text{Im} \left(\delta_t\Psi_{h,\tau}^n, \Psi_{h,\tau}^n\right)-\text{Im}\left(\delta_t\Psi_{h,\tau}^{n-1}, \Psi_{h,\tau}^{n-1}\right)}{\tau}-\frac{\Omega\epsilon^2}{2\tau}
	\bigg[\text{Im}\left(\text{i}L_z\Psi_{h,\tau}^{n+1}, \Psi_{h,\tau}^{n+1}\right)_h-\text{Im}\left(\text{i}L_z\Psi_{h,\tau}^{n-1}, \Psi_{h,\tau}^{n-1}\right)_h\bigg]=0,
\end{align*}
which implies that 
\begin{align*}
&	\epsilon^2\text{Im} \left(\delta_t\Psi_{h,\tau}^n, \Psi_{h,\tau}^n\right)
	-\frac{\Omega\epsilon^2}{2}
	\bigg[\text{Im}\left(\text{i}L_z\Psi_{h,\tau}^{n+1}, \Psi_{h,\tau}^{n+1}\right)_h+\text{Im}\left(\text{i}L_z\Psi_{h,\tau}^{n}, \Psi_{h,\tau}^{n}\right)_h\bigg]\nn\\
	&=\epsilon^2\text{Im} \left(\delta_t\Psi_{h,\tau}^{n-1}, \Psi_{h,\tau}^{n-1}\right)
	-\frac{\Omega\epsilon^2}{2}
	\bigg[\text{Im}\left(\text{i}L_z\Psi_{h,\tau}^{n}, \Psi_{h,\tau}^{n}\right)_h+\text{Im}\left(\text{i}L_z\Psi_{h,\tau}^{n-1}, \Psi_{h,\tau}^{n-1}\right)_h\bigg].
\end{align*}
Finally, we derive the charge conservation $Q_h^n=Q_h^1$ for $1\leq n\leq N$. 
\end{proof}

Due to the equivalence of norms, we can assume  
\begin{align}\label{eqn:equi1}  
	C_{U_1}\|L_z\omega_h\|_{L^2} \leq |\omega_h|_{1,h} \leq C_{U_2}\|L_z\omega_h\|_{L^2},\qquad \forall~ \omega_h\in V_h.  
\end{align}  
Additionally, by applying the Poincaré inequality, we have   
\begin{align}\label{eqn:equi2}  
	\|\omega_h\|_{L^2} \leq C_U |\omega_h|_{1,h},\qquad \forall~ \omega_h\in V_h.  
\end{align}  
For convenience, we also use $C_U$, $C_{U_1}$, and $C_{U_2}$ to denote the norm equivalence constants for functions in $H_0^1(U)$.
Then, using the energy conservation in Theorem \ref{thm-fullydiscrete-conservation}, we can get the boundedness of $\Psi_{h,\tau}^n$. 

\begin{thm}\label{thm:boundedness}
Assume that the parameter $\epsilon$ satisfies $\epsilon < C_{U_1}/\Omega$ and $\lambda>0$. 
Then, it holds that
\begin{align}\label{eqn:boundedness}
	\left\|\Psi_{h,\tau}^n\right\|_{1,h} \leq M,
\end{align}
where $M$ is a positive bounded constant (which may vary in different contexts).
\end{thm}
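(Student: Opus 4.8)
The plan is to combine the exact energy conservation $E_h^n\equiv E_h^1$ of Theorem~\ref{thm-fullydiscrete-conservation} with a coercivity estimate for the discrete energy \eqref{eqn-discrete-energy} in terms of the broken $H^1$-seminorm, and then to bound $E_h^1$ independently of $h$ and $\tau$. First, for the coercivity: in \eqref{eqn-discrete-energy} the kinetic term $\epsilon^2\|\delta_t\Psi_{h,\tau}^{n-1}\|_{L^2}^2$, the $L^2$-term, and — since $\lambda>0$ — the quartic term are all nonnegative, so only the rotational term $-\tfrac{\Omega^2\epsilon^2}{2}(\|L_z\Psi_{h,\tau}^{n}\|_h^2+\|L_z\Psi_{h,\tau}^{n-1}\|_h^2)$ and the trapping term may be indefinite. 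For the former I would invoke the norm equivalence \eqref{eqn:equi1}, which gives $\|L_z\Psi_{h,\tau}^{n}\|_h^2\le C_{U_1}^{-2}\,|\Psi_{h,\tau}^{n}|_{1,h}^2$, hence
\[
\frac12\,|\Psi_{h,\tau}^{n}|_{1,h}^2-\frac{\Omega^2\epsilon^2}{2}\,\|L_z\Psi_{h,\tau}^{n}\|_h^2\;\ge\;\frac12\Big(1-\frac{\Omega^2\epsilon^2}{C_{U_1}^2}\Big)|\Psi_{h,\tau}^{n}|_{1,h}^2\;=:\;c_0\,|\Psi_{h,\tau}^{n}|_{1,h}^2,
\]
where the hypothesis $\epsilon<C_{U_1}/\Omega$ is precisely what makes $c_0>0$; the trapping term is absorbed via $|(V\Psi,\Psi)|\le\|V\|_{L^\infty}\|\Psi\|_{L^2}^2$ into the term $\tfrac1{2\epsilon^2}\|\Psi\|_{L^2}^2$ (using the boundedness, and nonnegativity, of the trapping potential $V$). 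Altogether this yields $E_h^n\ge c_0\big(|\Psi_{h,\tau}^{n}|_{1,h}^2+|\Psi_{h,\tau}^{n-1}|_{1,h}^2\big)$ for every $n\ge1$.

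Then I would bound $E_h^1$ by a constant independent of $h$ and $\tau$. By \eqref{eq:fulldiscrete_initial} and linearity of $I_h$,
\[
\delta_t\Psi_{h,\tau}^0=\frac{\Psi_{h,\tau}^1-\Psi_{h,\tau}^0}{\tau}=I_h\!\Big(\frac{1}{\epsilon^2}\psi_1+\frac{\tau}{2\epsilon^2}\big[\Delta\psi_0-\tfrac{1}{\epsilon^2}\psi_0-(V+\lambda|\psi_0|^2)\psi_0+2\mathrm i\Omega L_z\psi_1+\Omega^2\epsilon^2 L_z^2\psi_0\big]\Big),
\]
and $\Psi_{h,\tau}^0=I_h\psi_0$, $\Psi_{h,\tau}^1=\Psi_{h,\tau}^0+\tau\,\delta_t\Psi_{h,\tau}^0$. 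Using the $L^2$- and $H^1$-stability of $I_h$ implied by \eqref{eqn:Ih_error} together with the regularity of the data $\psi_0,\psi_1$ postulated for the error analysis (and $\tau\le T$), the quantities $\|\Psi_{h,\tau}^0\|_{1,h}$, $\|\Psi_{h,\tau}^1\|_{1,h}$ and $\|\delta_t\Psi_{h,\tau}^0\|_{L^2}$ are bounded by constants depending only on $\epsilon,\Omega,\lambda,\|V\|_{L^\infty},T$ and the data, but not on $h$; substituting into \eqref{eqn-discrete-energy} gives $E_h^1\le M_0^2$ with $M_0$ independent of $h$ and $\tau$. Finally, for every $n\ge1$, $c_0\big(|\Psi_{h,\tau}^{n}|_{1,h}^2+|\Psi_{h,\tau}^{n-1}|_{1,h}^2\big)\le E_h^n=E_h^1\le M_0^2$, so $|\Psi_{h,\tau}^{n}|_{1,h}\le M_0/\sqrt{c_0}$ for all $n\ge0$, and the Poincaré inequality \eqref{eqn:equi2} gives $\|\Psi_{h,\tau}^{n}\|_{1,h}=\|\Psi_{h,\tau}^{n}\|_{L^2}+|\Psi_{h,\tau}^{n}|_{1,h}\le(1+C_U)|\Psi_{h,\tau}^{n}|_{1,h}\le M$, which is \eqref{eqn:boundedness}.

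The main obstacle is the coercivity step, and within it the handling of the indefinite rotational energy $-\Omega^2\epsilon^2\|L_z\Psi_{h,\tau}^{n}\|_h^2$: one must recognize that the (broken) $H^1$-seminorm dominates $\|L_z\cdot\|_h$ — this is where the bounded geometry of $U$ and the $Q_{11}$/$EQ_1^{rot}$ structure enter through \eqref{eqn:equi1} — and that the condition $\epsilon<C_{U_1}/\Omega$ leaves a strictly positive fraction $c_0$ of the $H^1$-energy. A secondary point is that in the nonconforming case $\|L_z\Psi_{h,\tau}^{n}\|_h^2$ and $|\Psi_{h,\tau}^{n}|_{1,h}^2$ must be read as the element-wise quantities in \eqref{eqn-discrete-energy}, for which \eqref{eqn:equi1} is assumed to hold, and that the bound on $\|\delta_t\Psi_{h,\tau}^0\|_{L^2}$ requires enough smoothness of $\psi_0$ (so that $\Delta\psi_0$ and $L_z^2\psi_0$ can be interpolated stably) — a regularity level already required for the convergence analysis.
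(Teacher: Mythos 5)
Your proposal is correct and follows essentially the same route as the paper: invoke the discrete energy conservation $E_h^n\equiv E_h^1$, control the indefinite rotational term via the norm equivalence \eqref{eqn:equi1} so that the condition $\epsilon<C_{U_1}/\Omega$ leaves a positive fraction of the broken $H^1$-seminorm, absorb the potential term using $\|V\|_{L^\infty}\leq C_V$ and the smallness of $\epsilon$, and conclude with the Poincar\'e inequality \eqref{eqn:equi2}. The only difference is cosmetic: you additionally make explicit the $h$- and $\tau$-uniform bound on $E_h^1$ through the interpolation stability of the initial data, a step the paper leaves implicit by simply bounding everything by $|E_h^1|$.
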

\begin{proof}
	Using the energy conservation in \eqref{eqn-fullydiscrete-conservation}, and by virtue of \eqref{eqn:equi1}, we can obtain 
	\begin{align}\label{eqn:boundedness_pf1}
		&\epsilon^2\left\|\delta_t\Psi_{h,\tau}^{n-1}\right\|^2_{L^2}
		+\frac{\left|\Psi_{h,\tau}^{n}\right|_{1,h}^2+\left| \Psi_{h,\tau}^{n-1}\right|_{1,h}^2}{2}
		+\frac{\left\| \Psi_{h,\tau}^{n}\right\|_{L^2}^2+\left\| \Psi_{h,\tau}^{n-1}\right\|_{L^2}^2}{2 \epsilon^2}
		+\frac{\lambda}{4}\left(\|\Psi_{h,\tau}^{n}\|_{L^4}^4+\|\Psi_{h,\tau}^{n-1}\|_{L^4}^4\right)\nn\\
		&=E_h^1
		+\frac{\Omega^2\epsilon^2}{2}\left(\left\|L_z \Psi_{h,\tau}^{n}\right\|_{h}^2+\left\|L_z \Psi_{h,\tau}^{n-1}\right\|_{h}^2\right)
		-\frac{\left(V\Psi_{h,\tau}^{n}, \Psi_{h,\tau}^{n}\right)+\left(V\Psi_{h,\tau}^{n-1}, \Psi_{h,\tau}^{n-1}\right)}{2}\nn\\
&		\leq |E_h^1|
		+\frac{\Omega^2\epsilon^2}{2C_{U_1}^2}\left(\left|\Psi_{h,\tau}^{n}\right|_{1,h}^2+\left| \Psi_{h,\tau}^{n-1}\right|_{1,h}^2\right)
		+\frac{C_V}{2}\left(\left\|\Psi_{h,\tau}^{n}\right\|_{L^2}^2+\left\|\Psi_{h,\tau}^{n-1}\right\|_{L^2}^2\right),
	\end{align}
	where we have assumed $\|V\|_{L^\infty}\leq C_V$. 
Since \(\epsilon\) is a relatively small number, we assume that 
\begin{align}\label{eqn:ep}
\frac{\Omega^2\epsilon^2}{2C_{U_1}^2}+\epsilon_0=\frac{1}{2},
\qquad \frac{C_V}{2}+\epsilon_1=\frac{1}{2\epsilon^2},
\end{align}
where $\epsilon_0$ and $\epsilon_1$ are two positive constants, 
which allows us to further obtain
	\begin{align*}
	\epsilon^2\left\|\delta_t\Psi_{h,\tau}^{n-1}\right\|^2_{L^2}
	+\epsilon_0\left(\left|\Psi_{h,\tau}^{n}\right|_{1,h}^2+\left| \Psi_{h,\tau}^{n-1}\right|_{1,h}^2\right)
	+\epsilon_1\left(\left\| \Psi_{h,\tau}^{n}\right\|_{L^2}^2+\left\| \Psi_{h,\tau}^{n-1}\right\|_{L^2}^2\right)
	+\frac{\lambda}{4}\left(\|\Psi_{h,\tau}^{n}\|_{L^4}^4+\|\Psi_{h,\tau}^{n-1}\|_{L^4}^4\right)\leq |E_h^1|.
\end{align*}
Therefore, we complete the proof. 
\end{proof}

\begin{rem}
From \cite{bainov1995nonexistence}, it is known that the NKG equation with focusing self-interaction (\(\lambda < 0\)) can lead to possible finite-time blow-up, whereas in the defocusing case (\(\lambda > 0\)), the existence of a global solution is assured. In this work, we focus on scenarios that are far from the critical blow-up time. Additionally, the boundedness of the numerical solution, as stated in Theorem \ref{thm:boundedness}, requires \(\lambda\) to be positive. 
\end{rem}

\section{Main results}\label{sec3}
\setcounter{equation}{0}

In this section, we present the primary convergence results, including optimal and high-order error estimates. The corresponding proof will be provided in the following section. We assume, in this work, that the solution to \eqref{eq:model2} exists and satisfies the following regularity condition:
\begin{align}\label{eqn:reg}
	\big\|\Psi_0\big\|_{H^2}+\big\|\Psi\big\|_{L^\infty((0, T); H^3)}
	+ \big\|\Psi_t\big\|_{L^2((0, T); H^3)}
	+ \big\|\Psi_{tt}\big\|_{L^2((0, T); H^4)}+ \big\|\Psi_{ttt}\big\|_{L^2((0, T); H^2)}\leq C_u. 
\end{align}

\begin{thm}\label{thm:main}
	Let $\Psi_{h,\tau}^{n+1}\in V_h$, $1\leq n\leq N-1$ be the solution of the full-discrete method \eqref{eqn:fullydiscrete}. Then, under the regularity assumption \eqref{eqn:reg}, there hold 
	\begin{itemize}
		\item [(a)] ($L^\infty$-norm boundedness)
		\begin{align}\label{eqn:L_infty}
			\sup_{0\leq m\leq N}\left\|\Psi_{h,\tau}^m\right\|_{L^\infty}\leq M, 
		\end{align}
		\item [(b)] (optimal $L^2$-norm error estimate)
		\begin{align}\label{eqn:convergenceL2}
			\sup_{0\leq m\leq N}\left\|\Psi^n-\Psi_{h,\tau}^m\right\|_{L^2}
			\leq C(h^2+\tau^2),
		\end{align}
		\item [(c)] (optimal $H^1$-norm error estimate)
		\begin{align}\label{eqn:convergenceH1}
			\sup_{0\leq m\leq N}\left\|\Psi^n-\Psi_{h,\tau}^m\right\|_{1,h}\leq C(h+\tau^2),
		\end{align}
		\item [(d)] (high-order $H^1$-norm error estimates)
		\begin{align}
			&\sup_{0\leq m\leq N}\|I_h\Psi^n-\Psi_{h,\tau}^m\|_{1,h}\leq C(h^2+\tau^2),\label{eqn:supercloseH1}\\
			&\sup_{0\leq m\leq N}\left\|\Psi^n-I_{2h}\Psi_{h,\tau}^m\right\|_{1,h}\leq C(h^2+\tau^2)\label{eqn:superconvergenceH1},
		\end{align}
	\end{itemize}
	where $C>0$ is a constant independent of $h$ and $\tau$. 
\end{thm}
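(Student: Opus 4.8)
The argument rests on a temporal--spatial error splitting $\Psi(t_n)-\Psi^n_{h,\tau}=(\Psi(t_n)-\Psi^n_\tau)+(\Psi^n_\tau-\Psi^n_{h,\tau})$ together with a cut-off of the cubic nonlinearity; the cut-off is what lets every bound below hold with no relation between $h$ and $\tau$. \emph{Step 1 (time-discrete solution).} Existence and uniqueness of $\Psi^{n+1}_\tau$ solving \eqref{eqn:timediscrete} is Lemma~\ref{lem:timediscrete_truncate_existence}. Taylor-expanding $\Psi$ about $t_n$ and using \eqref{eqn:reg}, the local truncation errors of \eqref{eqn:timediscrete} and of the starting value \eqref{eq:timediscrete_initial} are $O(\tau^2)$ in $L^2(U)$; subtracting from \eqref{eqn:timediscrete}, testing the resulting three-level error equation against $\delta_{\hat t}(\Psi(t_n)-\Psi^n_\tau)$ and against $\widetilde{(\Psi(t_n)-\Psi^n_\tau)}$, reusing the telescoping identities of Lemmas~\ref{lem:xx}--\ref{lem:relations}, and absorbing the indefinite $-\Omega^2\epsilon^2\|L_z(\cdot)\|^2$ contribution via the smallness $\epsilon<C_{U_1}/\Omega$, a discrete Gronwall inequality gives $\max_{1\le n\le N}\|\Psi(t_n)-\Psi^n_\tau\|_{H^1}\le C\tau^2$. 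Differencing \eqref{eqn:timediscrete} in the discrete time variable and repeating the argument propagates \eqref{eqn:reg} to the time-discrete level, giving $\tau$-uniform bounds $\sup_n\|\Psi^n_\tau\|_{H^3}\le K_0$, hence $\sup_n\|\Psi^n_\tau\|_{L^\infty}\le K_0$ by the $2$D Sobolev embedding.

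\emph{Step 2 (cut-off and unconditional spatial estimates).} Fix $\rho:=K_0+1$ and let $\Psi^n_{h,\tau,\rho}\in V_h$ solve the scheme obtained from \eqref{eqn:fullydiscrete} by replacing $|\Psi^{n\pm1}_{h,\tau}|^2$ with $\chi_\rho(|\Psi^{n\pm1}_{h,\tau}|^2)$, where $\chi_\rho\in C^1(\mathbb R)$ is globally Lipschitz, equals the identity on $[0,\rho^2]$ and is constant on $[(\rho+1)^2,\infty)$. The modified nonlinearity still derives from a bounded $C^1$ potential, so Theorems~\ref{thm-fullydiscrete-conservation} and \ref{thm:boundedness} carry over and the scheme is uniquely solvable for every $\tau$. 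Set $e^n_h:=\Psi^n_{h,\tau,\rho}-R_h\Psi^n_\tau$ and subtract \eqref{eqn:timediscrete} (tested against $\omega_h\in V_h$, which in the nonconforming case generates the consistency functionals carried by $\langle\cdot,\cdot\rangle$) from the modified \eqref{eqn:fullydiscrete}. The Ritz identity \eqref{eqn:ritz} kills the leading gradient term, \eqref{eqn:ritz_error}, the Step~1 bounds and the global Lipschitz property of $\chi_\rho$ control the nonlinear and lower-order residuals by $C(\|e^{n+1}_h\|_{L^2}+\|e^n_h\|_{L^2}+\|e^{n-1}_h\|_{L^2})+Ch^2$ \emph{with no $L^\infty$ control on $\Psi^n_{h,\tau,\rho}$}, the rotational/boundary residuals are reduced to the same order (integration by parts to move $L_z$ off the projection error, plus the zero-mean edge property of $V_h^{NC}$), and $\langle S^{n+1},\cdot\rangle$ is exactly what makes the $-2\mathrm i\Omega\epsilon^2(L_z\delta_{\hat t}(\cdot),\delta_{\hat t}(\cdot))_h$ term cancel as in the proof of Theorem~\ref{thm-fullydiscrete-conservation}. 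Testing the error equation against $\delta_{\hat t}e^n_h$, taking the real part, applying Lemma~\ref{lem:relations}, absorbing $-\Omega^2\epsilon^2\|L_ze^{n+1}_h\|_h^2$ into $\tfrac12|e^{n+1}_h|^2_{1,h}$ through \eqref{eqn:equi1} and $\epsilon<C_{U_1}/\Omega$, and using discrete Gronwall (the initial errors are $O(h)$ in $|\cdot|_{1,h}$ by the form of \eqref{eq:fulldiscrete_initial}) gives, for \emph{all} $h,\tau$,
\[
\max_{0\le n\le N}\bigl(\|e^n_h\|_{L^2}+|e^n_h|_{1,h}\bigr)\le Ch .
\]
Redoing this energy argument with $\theta^n_h:=\Psi^n_{h,\tau,\rho}-I_h\Psi^n_\tau$, replacing the Ritz cancellation by the superapproximation estimate \eqref{eqn:Ih_error} (the $O(h^2)\|\cdot\|_{H^3}$ term for $V_h^C$, exact orthogonality for $V_h^{NC}$), handling the rotational cross terms by the same integration-by-parts reduction, and inserting $\|e^n_h\|_{L^2}\le Ch$, upgrades the bound to the superclose estimate
\[
\max_{0\le n\le N}|\theta^n_h|_{1,h}\le C(h^2+\tau^2) .
\]

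\emph{Step 3 (removing the cut-off and assembling (a)--(d)).} From the superclose bound, the inverse inequality on $V_h$, and $\|I_h\Psi^n_\tau\|_{L^\infty}\le K_0+Ch$, one gets $\|\Psi^n_{h,\tau,\rho}\|_{L^\infty}\le K_0+1=\rho$ for $h,\tau$ small, so $\chi_\rho$ is never active and $\Psi^n_{h,\tau,\rho}=\Psi^n_{h,\tau}$ solves \eqref{eqn:fullydiscrete}; this yields (a). Parts (b), (c) and \eqref{eqn:supercloseH1} then follow from the splitting, Steps~1--2, the projection/interpolation bounds \eqref{eqn:ritz_error}--\eqref{eqn:Ih_error}, and the triangle inequality (the optimal $O(h^2)$ in $L^2$ is read off from the superclose estimate plus $\|\Psi^n_\tau-I_h\Psi^n_\tau\|_{L^2}\le Ch^2$). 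For \eqref{eqn:superconvergenceH1} use a post-processing interpolation $I_{2h}$ on the coarse mesh of size $2h$, which is $|\cdot|_{1,h}$-stable on $V_h$ and satisfies $\|v-I_{2h}I_hv\|_{1,h}\le Ch^2\|v\|_{H^3}$: writing $\Psi(t_n)-I_{2h}\Psi^n_{h,\tau}=(\Psi(t_n)-I_{2h}I_h\Psi(t_n))+I_{2h}(I_h\Psi(t_n)-\Psi^n_{h,\tau})$ and bounding the second term by $C|I_h\Psi(t_n)-\Psi^n_{h,\tau}|_{1,h}$ via \eqref{eqn:supercloseH1} and the Step~1 temporal estimate gives the claim.

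\emph{Main obstacle.} The crux is Step~2: closing a discrete Gronwall estimate for the fully implicit, three-level, rotation-coupled scheme \emph{without any step-coupling condition}. The cubic term cannot be controlled without an $L^\infty$ bound on the not-yet-estimated numerical solution, which is exactly what forces the cut-off; and the operators $L_z\delta_{\hat t}\Psi$, $L_z^2\Psi$ are not self-adjoint in the broken nonconforming setting, so the terms pairing $L_z$ (or $L_z^2$) against the projection/interpolation error are only $O(h)$ at face value and must be pushed to $O(h^2)$ by transferring $L_z$ across the pairing and invoking $\int_F[\cdot]\,\mathrm ds=0$, all while keeping the stabilizer $\langle S^{n+1},\cdot\rangle$ synchronized with the rotational boundary contributions (this is also why the $L^2$ and superclose estimates inherit the requirement $\epsilon<C_{U_1}/\Omega$). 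Propagating the regularity \eqref{eqn:reg} to $\Psi^n_\tau$ in Step~1, which the $O(h^2)$ superclose rate needs, is the second delicate ingredient.
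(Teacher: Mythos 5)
Your overall strategy is the same as the paper's: a temporal--spatial error splitting, a cut-off of the cubic nonlinearity to avoid any time--space coupling, energy/Gronwall arguments tested against $\delta_{\hat t}$ of the error, a superclose bound with respect to $I_h$, deactivation of the cut-off through a discrete $L^\infty$ bound, and $I_{2h}$ post-processing for \eqref{eqn:superconvergenceH1}. However, there are two concrete gaps in your execution. First, Step~1 analyzes the \emph{untruncated} time-discrete scheme \eqref{eqn:timediscrete} directly: to bound the difference of the cubic terms $\lambda\frac{|\Psi_\tau^{n+1}|^2+|\Psi_\tau^{n-1}|^2}{2}\widetilde\Psi_\tau^n-\lambda\frac{|\Psi^{n+1}|^2+|\Psi^{n-1}|^2}{2}\widetilde\Psi^n$ by $C(\|e_\tau^{n+1}\|+\|e_\tau^{n-1}\|)$ you need an a priori $L^\infty$ (or $H^2$) bound on the still-unknown $\Psi_\tau^{n\pm1}$, which is only obtained \emph{after} the error estimate; as written the step is circular. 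This is precisely why the paper introduces the truncated time-discrete scheme \eqref{eqn:timediscrete_truncate}, proves the $O(\tau^2)$ bound in $H^2$ for it, and only then identifies it with \eqref{eqn:timediscrete} (Lemma~\ref{lem:timediscrete_err}); your citation of Lemma~\ref{lem:timediscrete_truncate_existence} for existence \emph{and uniqueness} of the untruncated scheme is also misplaced, since that lemma gives existence for the truncated scheme only. The fix is exactly your own Step~2 device applied one level earlier.

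Second, your removal of the cut-off in Step~3 does not go through unconditionally with the bounds you state. You quote the spatial/superclose estimate as $C(h^2+\tau^2)$; combined with the inverse inequality $\|\theta\|_{L^\infty}\le Ch^{-1}\|\theta\|_{L^2}$ this gives $C(h+\tau^2/h)$, which is small only under a coupling $\tau^2\lesssim h$, contradicting the claimed unconditionality. The paper avoids this by proving the sharper structure $\|\Psi_\tau^m-\Psi_{h,\tau}^{T,m}\|_{L^2}+\|I_h\Psi_\tau^m-\Psi_{h,\tau}^{T,m}\|_{1,h}\le C(h\tau^2+h^2)$ in \eqref{eqn:fullydiscrete_err_truncated} (note the factor $h$ multiplying $\tau^2$: the comparison is against the time-discrete solution, so no pure $\tau^2$ term appears), after which $h^{-1}\cdot C(h\tau^2+h^2)=C(\tau^2+h)$ is small for $\tau$ and $h$ small \emph{independently}. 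Your framework can deliver this, but only if you keep the $h\tau^2$ weighting through the residual estimates; relatedly, feeding the coarse Ritz-based bound $\|e_h^n\|_{L^2}\le Ch$ into the nonlinear residual of the second (superclose) pass would cap that pass at $O(h)$ --- you must instead use $\|\Psi_{h,\tau,\rho}^{n}-\Psi_\tau^{n}\|_{L^2}\le\|\theta^n\|_{L^2}+Ch^2$ with the interpolation error, which makes your first (Ritz) pass unnecessary; the paper runs a single interpolation-based splitting and uses $R_h$ only as an auxiliary comparison ($\|R_h\Psi-I_h\Psi\|_{H^1}\le Ch^2\|\Psi\|_{H^3}$) to push the rotational term to second order. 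Finally, note that in the error equation the stabilizer $\langle S^{n+1},\cdot\rangle$ does not \emph{exactly} cancel $-2\mathrm{i}\Omega\epsilon^2(L_z\delta_{\hat t}\theta,\delta_{\hat t}\theta)_h$ (it involves the discrete solution, not $\theta$); the combination is only estimated to order $C(h\tau^2+h^2)\|\delta_{\hat t}\theta\|_{1,h}$, exact cancellation occurring only in the conservation proof of Theorem~\ref{thm-fullydiscrete-conservation}.
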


The proof of Theorem \ref{thm:main} will be given in the following section. 

\section{Error analysis for the FEMs}\label{sec4}
\setcounter{equation}{0}
In this section, we provide an elaborate proof of Theorem \ref{thm:main}.

\subsection{Error analysis for the time-discrete method}
In this subsection, we analyze the convergence and boundedness of the solution to the time-discrete method \eqref{eqn:timediscrete}. 
However, since the method is implicit, we first investigate its truncated auxiliary system.
The following truncated function plays a pivotal role in convergence analysis. Under the regularity assumption \eqref{eqn:reg}, we define 
\begin{equation}
	\label{eqn:timebound}
	K_0:=\sup_{M\in \mathbb N}\left\{\max_{0\leq m\leq M}\|\Psi^m\|_{L^\infty}\right\}+1.
\end{equation}
\begin{myDef} \label{def1}
(Truncated function)
	Define the truncated function as 
	\begin{align}
		\label{eqn:cutoff}
		\mu_A(s)=s\cdot \chi(s\cdot K_0^{-2}),
	\end{align}
	where 
	\begin{equation}\label{eqn:cutoffx}
		\chi(x)=\left\{
		\begin{aligned}
			&\,0,  &|x|\in [2, +\infty), \\
			&\exp\left(1+\frac{1}{(|x|-1)^2-1}\right),  &|x|\in [1, 2), \\
			&1,  &|x|\in [0, 1). 
		\end{aligned}
		\right.
	\end{equation}
\end{myDef}
The truncated function $\mu_A(\cdot)$ exhibits the following characteristics.
\begin{lemma}\label{lem:cutoff_pro}
	The truncated function $\mu_A(s)$ belongs to $C^\infty(\mathbb{R})$, and there hold 
	\begin{subequations}
		\label{eqn:cutoff_pro}
		\begin{align}
			&\|\mu_A(w)\|_{L^\infty}\leq C_M, \quad  \left|\mu_A(w_1)-\mu_A(w_2)\right|\leq C_\mu|w_1-w_2|, \label{eqn:cutoff_pro_a}\\
			&\left|\mu_A(|w_1|^2)-\mu_A(|w_2|^2)\right|\leq C_\mu|w_1-w_2|,\label{eqn:cutoff_pro_b}
		\end{align}
	\end{subequations}
	where $C_M$ and $C_\mu$ are positive and bounded constants.   
\end{lemma}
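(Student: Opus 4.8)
The plan is to verify smoothness first, then establish the $L^\infty$ bound, and finally derive the two Lipschitz-type estimates by distinguishing the ``active'' region (where the cut-off has not yet been triggered) from the ``tail'' region. The building block is the standard bump function: I would first check that $\chi \in C^\infty(\mathbb{R})$. On $|x|<1$ and $|x|>2$ it is constant, so the only concern is at $|x|=1$ and $|x|=2$; near $|x|=2$ one rewrites $\chi(x)=\exp\!\bigl(1+\tfrac{1}{(|x|-1)^2-1}\bigr)$ and notes that as $|x|\uparrow 2$ the exponent tends to $-\infty$, and all derivatives vanish there by the usual argument for $e^{-1/t}$-type functions; near $|x|=1$ the exponent tends to $1+\tfrac{1}{-1}=0$... in fact one should be slightly careful here, but the intended normalization makes $\chi$ and all derivatives match the constant value $1$ from the left, so $\chi\in C^\infty$. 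Consequently $\mu_A(s)=s\,\chi(s K_0^{-2})$ is a product of smooth functions and lies in $C^\infty(\mathbb{R})$.

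For the $L^\infty$ bound, observe that $\chi$ is supported in $|x|\le 2$, so $\mu_A(s)=0$ whenever $|s|\ge 2K_0^2$; on the complementary set $|s|<2K_0^2$ one has $|\mu_A(s)|=|s|\,|\chi(sK_0^{-2})|\le 2K_0^2\cdot\|\chi\|_{L^\infty}=2K_0^2$, so $C_M:=2K_0^2$ works. For the Lipschitz estimate \eqref{eqn:cutoff_pro_a}, I would show $\mu_A$ is globally Lipschitz by bounding $\mu_A'$: since $\mu_A$ is $C^1$ with $\mu_A'(s)=\chi(sK_0^{-2})+sK_0^{-2}\chi'(sK_0^{-2})$, both terms are bounded ($\chi$ and $\chi'$ are bounded and compactly supported, and the factor $sK_0^{-2}$ is controlled by $2$ on the support of $\chi'$), so $\|\mu_A'\|_{L^\infty}=:C_\mu<\infty$ and the mean value theorem gives $|\mu_A(w_1)-\mu_A(w_2)|\le C_\mu|w_1-w_2|$.

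Finally, \eqref{eqn:cutoff_pro_b} follows by composing with the map $w\mapsto|w|^2$. The subtlety is that $w\mapsto|w|^2$ is not globally Lipschitz on $\mathbb{C}$, so one cannot simply chain Lipschitz constants; instead I would exploit that $\mu_A$ is constant outside a bounded set. If both $|w_1|^2$ and $|w_2|^2$ exceed $2K_0^2$ the left-hand side is $0$ and there is nothing to prove. Otherwise at least one of them, say $|w_1|^2\le 2K_0^2$, so $|w_1|\le\sqrt{2}\,K_0$; then either $|w_2|\le 2\sqrt{2}\,K_0$ as well, in which case $\bigl||w_1|^2-|w_2|^2\bigr|=\bigl|(|w_1|+|w_2|)(|w_1|-|w_2|)\bigr|\le 3\sqrt2\,K_0\,|w_1-w_2|$ and we apply the already-proved Lipschitz bound for $\mu_A$; or $|w_2|>2\sqrt2\,K_0$, forcing $|w_1-w_2|\ge|w_2|-|w_1|>\sqrt2\,K_0$ while the left-hand side is at most $C_M=2K_0^2$, so it is bounded by $\tfrac{2K_0^2}{\sqrt2 K_0}|w_1-w_2|=\sqrt2\,K_0|w_1-w_2|$. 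Taking $C_\mu$ to be the largest of the finitely many constants produced yields \eqref{eqn:cutoff_pro_b}. I expect the main obstacle to be the careful case analysis in this last step — ensuring that the non-Lipschitz behaviour of the squaring map is neutralized by the compact support of the cut-off — rather than any heavy computation.
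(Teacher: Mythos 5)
The paper itself gives no argument for this lemma (its ``proof'' is the single sentence that the conclusion follows from elementary properties of functions), so there is no authorial route to compare against; your write-up supplies the missing details, and the substantive estimates are correct. The $L^\infty$ bound via the support of $\chi$, the global Lipschitz bound for $\mu_A$ via boundedness of $\mu_A'(s)=\chi(sK_0^{-2})+sK_0^{-2}\chi'(sK_0^{-2})$ (using $|sK_0^{-2}|\le 2$ on $\operatorname{supp}\chi'$), and in particular the case analysis for \eqref{eqn:cutoff_pro_b} are all sound. The last point is the only genuinely delicate one, and you handle it correctly: since $w\mapsto|w|^2$ is not globally Lipschitz, you split according to whether the arguments lie in the region where $\mu_A$ vanishes, use $\bigl||w_1|^2-|w_2|^2\bigr|\le(|w_1|+|w_2|)\,|w_1-w_2|$ when both moduli are $O(K_0)$, and use the uniform bound $C_M$ together with $|w_1-w_2|\gtrsim K_0$ in the remaining case. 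This is exactly what is needed for the way \eqref{eqn:cutoff_pro_b} is used later (e.g.\ in \eqref{eqn:timediscrete_truncate_err_pf5}).

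The one flawed step is the smoothness claim, where you sensed a problem and then dismissed it incorrectly. With the $\chi$ defined in \eqref{eqn:cutoffx}, set $t=|x|-1$; on $1\le|x|<2$ the exponent is $g(t)=1+\frac{1}{t^2-1}=-t^2-t^4-\cdots$, which is analytic and non-constant near $t=0$ with $g(0)=0$, $g'(0)=0$, $g''(0)=-2$. Hence $\chi$ matches the constant value $1$ only to first order at $|x|=1$: the one-sided second derivative from the right is $e^{0}\bigl[g'(0)^2+g''(0)\bigr]=-2\neq 0$, while it is $0$ from the left. So $\chi$ (and hence $\mu_A$) is $C^1$ but not $C^2$ across $|x|=1$; your assertion that ``all derivatives match the constant value $1$ from the left'' is false, and $\mu_A\notin C^\infty(\mathbb{R})$ as literally stated. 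This is really a defect of the lemma/definition in the paper (a genuinely $C^\infty$ cut-off would require a different normalization of the exponent), not of your estimates: everything you actually need downstream — boundedness and the two Lipschitz bounds, which rest only on $\mu_A\in C^{1}$ with bounded derivative — survives unchanged. If you keep your proof, either weaken the smoothness claim to $C^{1,1}$ (bounded, piecewise-continuous second derivative, consistent with the paper's Figure of $\hat\mu_A$ and its derivatives) or replace $\chi$ by a standard $C^\infty$ bump.
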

\begin{proof}
	Based on the simple elementary properties of functions, we can directly obtain the conclusion of this lemma.  
\end{proof}

\begin{rem}
	Let $r=s\cdot K_0^{-2}$ in \eqref{eqn:cutoff}, then 
	\begin{align*}
		\mu_A=K_0^2r\cdot \chi(r)=:K_0^2\,\hat{\mu}_A(r).	
	\end{align*}
	To study the properties \eqref{eqn:cutoff_pro} of the 
	function $\mu_A(s)$, we only need to consider the function $\hat{\mu}_A(r)$.  For giving a vivid description, we plot the figures of $\hat{\mu}_A(r)$, $\textup{d}\hat{\mu}_A(r)/\textup{d}r$ and $\textup{d}^2\hat{\mu}_A(r)/\textup{d}r^2$ in Figure \ref{figure:1}. As depicted in Figure \ref{figure:1}, we can clearly observe the boundedness of the function $\hat{\mu}_A(r)$ and its first-order and second-order derivatives. 
	
	\begin{figure}[!htp]
		\centering \includegraphics[width=0.32\linewidth]{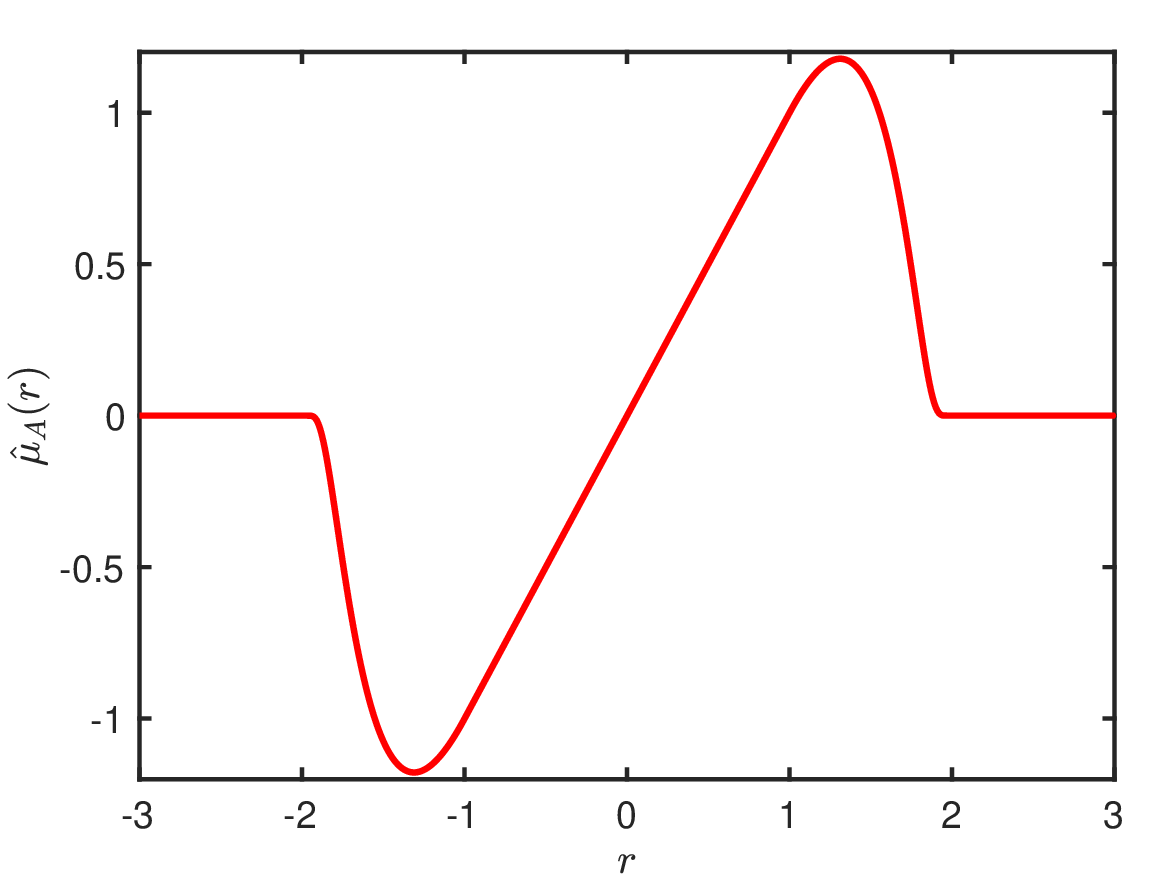}
        \includegraphics[width=0.32\linewidth]{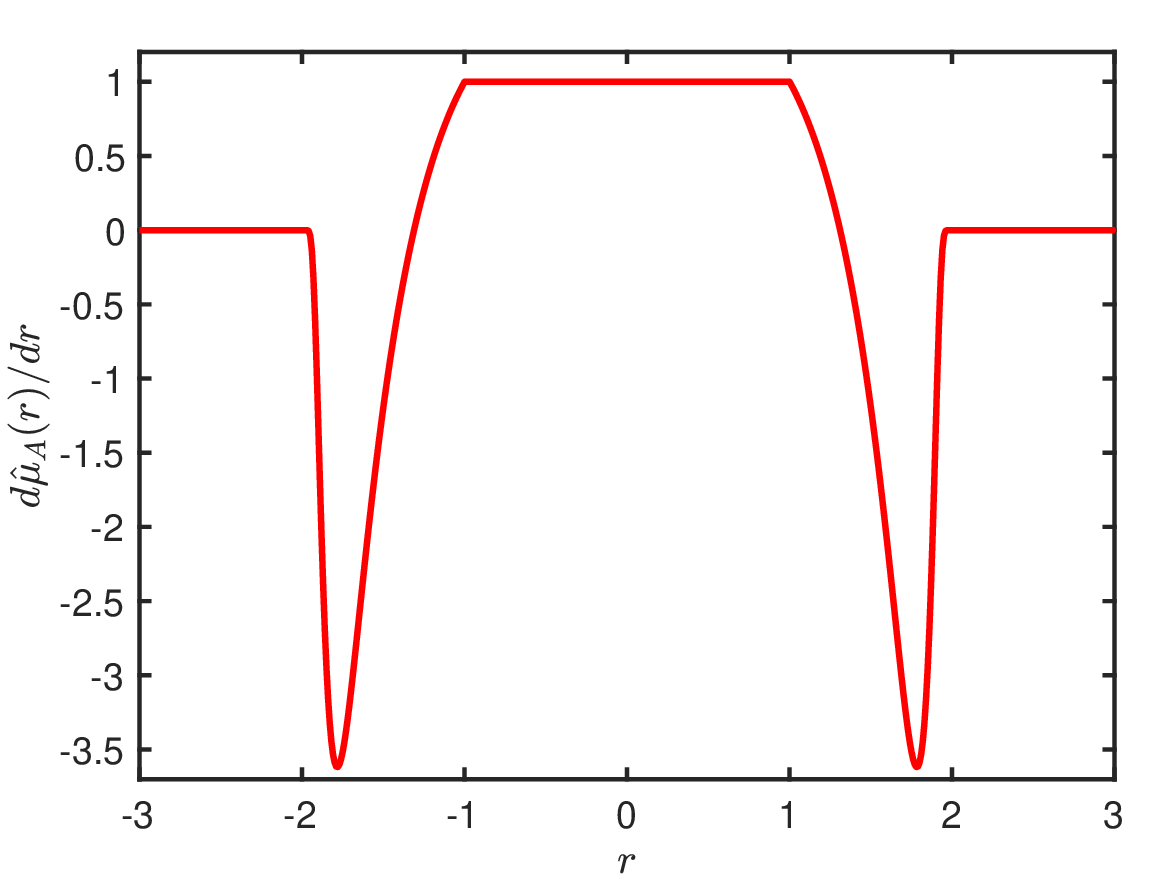}
	    \includegraphics[width=0.32\linewidth]{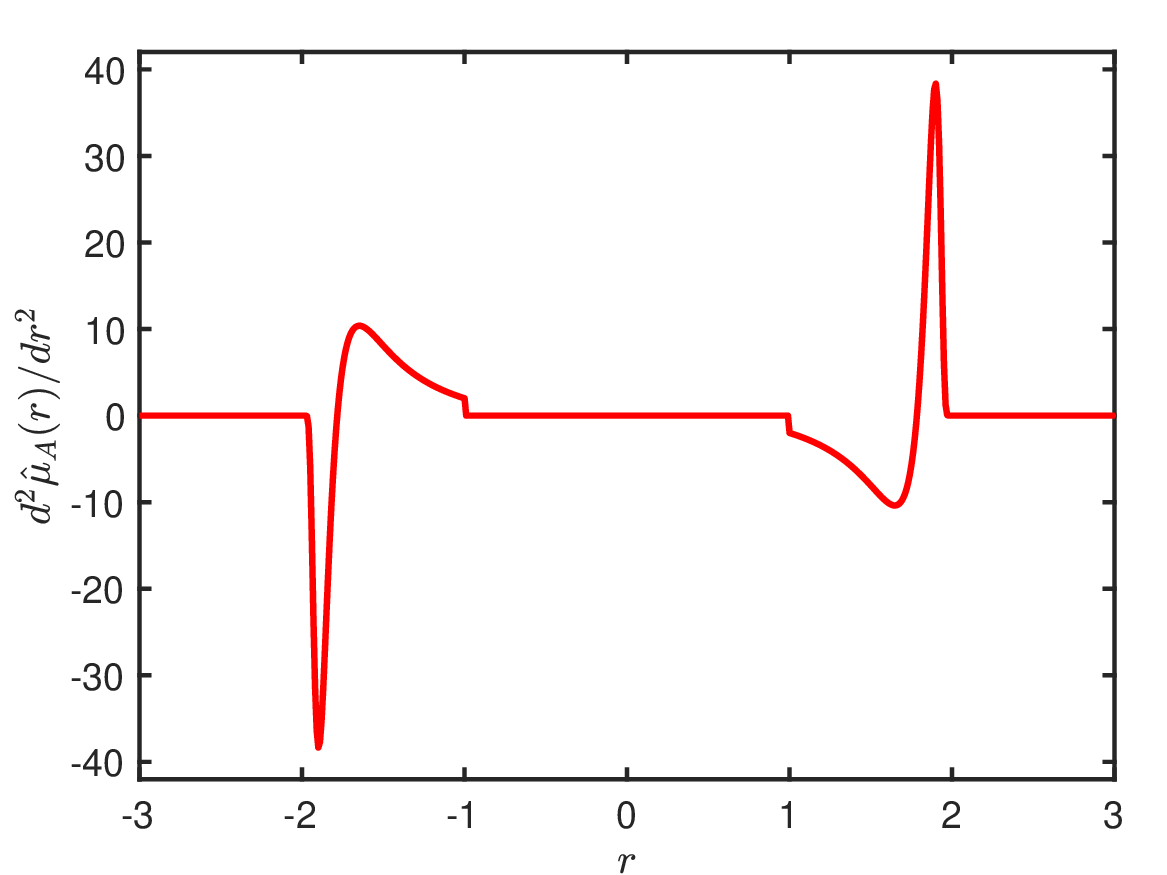}	
		\caption{Figures of the functions $\hat{\mu}_A(r)$, $\textup{d}\hat{\mu}_A(r)/\textup{d}r$ and $\textup{d}^2\hat{\mu}_A(r)/\textup{d}r^2$}
		\label{figure:1}
	\end{figure}

\end{rem}

By using the truncated function $\mu_A(\cdot)$, we introduce the following truncated time-discrete method.
\begin{myDef} (Truncated time-discrete method) Let $\Psi_\tau^{T, 0}:=\Psi^0$. Then for 
	$n\geq 1$, we define the truncated time-discrete method $\Psi^{T, n+1}_\tau\in H_0^1(U)$, $1\leq n\leq N-1$ as the solution to 
		\begin{align}\label{eqn:timediscrete_truncate}
		\epsilon^2\delta_{t\overline t}^2\Psi_\tau^{T, n}-\Delta \widetilde{\Psi}_\tau^{T, n}+
		\frac{1}{\epsilon^2}\widetilde{\Psi}_\tau^{T, n}
		+V\widetilde{\Psi}_\tau^{T, n}+\lambda\,\frac{\mu_A\left(\left|\Psi^{T, n+1}_\tau\right|^2\right)+\mu_A\left(\left|\Psi^{T, n-1}_\tau\right|^2\right)}{2}\widetilde{\Psi}_\tau^{T, n}
		-2\textup{i}\Omega \epsilon^2L_z\delta_{\hat{t}}\Psi_\tau^{T, n}-\Omega^2\epsilon^2L_z^2\widetilde{\Psi}_\tau^{T, n}=0.
	\end{align}
\end{myDef}

To ensure the convergence of the time-discrete method, we begin by deriving the error estimate for its truncated system.  
Furthermore, it is crucial to first establish the existence of the truncated time-discrete method \eqref{eqn:timediscrete_truncate} as a foundation for further analysis.
\begin{lemma}\label{lem:timediscrete_truncate_existence}
For the truncated time-discrete method \eqref{eqn:timediscrete_truncate}, 
there exists at least one solution $\Psi_\tau^{T, n+1}\in H_0^1(U)$ for $1\leq n\leq N-1$.
\end{lemma}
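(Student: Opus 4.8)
The plan is to recast the truncated scheme \eqref{eqn:timediscrete_truncate} as a single semilinear elliptic problem for $w:=\Psi_\tau^{T,n+1}$ and to solve it by a Schauder fixed-point argument. Fix $n$ with $1\le n\le N-1$ and treat $a:=\Psi_\tau^{T,n}$ and $b:=\Psi_\tau^{T,n-1}$ as given elements of $H_0^1(U)$ (for $n=1$, $b=\Psi_\tau^{T,0}=\Psi^0\in H^2(U)$; for $n\ge2$ they are supplied by the induction). Expanding $\delta_{t\overline t}^2$, $\widetilde{(\cdot)}$ and $\delta_{\hat t}$ and using Lemma~\ref{lem:xx} for the $L_z^2$-term, \eqref{eqn:timediscrete_truncate} becomes: find $w\in H_0^1(U)$ such that $\mathcal A_w(w,\phi)=\langle\mathcal F_w,\phi\rangle$ for all $\phi\in H_0^1(U)$, where, for frozen $v$,
\begin{align*}
\mathcal A_v(w,\phi):={}&\frac{\epsilon^2}{\tau^2}(w,\phi)+\tfrac12(\nabla w,\nabla\phi)+\tfrac1{2\epsilon^2}(w,\phi)+\tfrac12(Vw,\phi)\\
&+\tfrac{\lambda}{4}\big(\left[\mu_A(|v|^2)+\mu_A(|b|^2)\right]w,\phi\big)-\tfrac{\textup{i}\Omega\epsilon^2}{\tau}(L_zw,\phi)-\tfrac{\Omega^2\epsilon^2}{2}(L_zw,L_z\phi),
\end{align*}
and $\langle\mathcal F_v,\phi\rangle$ collects the remaining terms, which involve only $a$, $b$ and the cut-off coefficient $\mu_A(|v|^2)$ multiplying $b$ (the contributions $\Delta b$ and $L_z^2b$ being understood in the $H^{-1}(U)$ sense via $(\nabla b,\nabla\phi)$ and $(L_zb,L_z\phi)$). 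The structural fact that makes everything go through is that, by Definition~\ref{def1} and Lemma~\ref{lem:cutoff_pro}, $s\mapsto\mu_A(|s|^2)$ is \emph{globally} bounded by $C_M$ and \emph{globally} Lipschitz, no matter how large its argument is; this is exactly what substitutes for the missing embedding $H^1\hookrightarrow L^\infty$ in two dimensions.

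First I would solve the linearized problem by Lax--Milgram. For frozen $v\in L^4(U)$ the form $\mathcal A_v$ is bounded on $H_0^1(U)$ (the weights $x,y$ in $L_z$ are bounded on $U$, $V\in L^\infty(U)$, $\mu_A(|v|^2),\mu_A(|b|^2)\le C_M$, and $(L_zw,L_z\phi)\lesssim\|\nabla w\|_{L^2}\|\nabla\phi\|_{L^2}$), and it is coercive \emph{uniformly in $v$}: taking $\phi=w$ and the real part, the skew term $-\tfrac{\textup{i}\Omega\epsilon^2}{\tau}(L_zw,w)$ is purely imaginary and disappears, the nonlinear coefficient term is nonnegative since $\mu_A\ge0$ on $[0,\infty)$ and $\lambda>0$, and invoking the norm equivalence \eqref{eqn:equi1}, the standing assumption $\epsilon<C_{U_1}/\Omega$ and the splitting \eqref{eqn:ep} one gets
\[
\tfrac12\|\nabla w\|_{L^2}^2-\tfrac{\Omega^2\epsilon^2}{2}\|L_zw\|_{L^2}^2\ge\epsilon_0\|\nabla w\|_{L^2}^2,\qquad \tfrac1{2\epsilon^2}\|w\|_{L^2}^2+\tfrac12(Vw,w)\ge\epsilon_1\|w\|_{L^2}^2,
\]
so that $\mathrm{Re}\,\mathcal A_v(w,w)\ge c\|w\|_{1,h}^2$ with $c=\min\{\epsilon_0,\epsilon_1\}>0$ independent of $v$. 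Hence there is a unique $w=:\mathcal T(v)\in H_0^1(U)$ with $\mathcal A_v(w,\cdot)=\langle\mathcal F_v,\cdot\rangle$, and $\|\mathcal T(v)\|_{1,h}\le c^{-1}\|\mathcal F_v\|_{H^{-1}}\le R$ with $R$ finite and independent of $v$, because $\|\mathcal F_v\|_{H^{-1}}$ only sees $a$, $b$ and the bounded quantity $\mu_A(|v|^2)$.

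Then I would check the hypotheses of Schauder's theorem for $\mathcal T$ on $B:=\{v\in L^4(U):\|v\|_{L^4}\le C_*R\}$, with $C_*$ the norm of the embedding $H_0^1(U)\hookrightarrow L^4(U)$. The uniform bound gives $\mathcal T(B)\subset\{\|w\|_{1,h}\le R\}\subset B$, and since $H_0^1(U)\hookrightarrow\hookrightarrow L^4(U)$ compactly in two dimensions, $\mathcal T(B)$ is precompact in $L^4(U)$. For continuity, if $v_k\to v$ in $L^4(U)$ then by \eqref{eqn:cutoff_pro_b} $\|\mu_A(|v_k|^2)-\mu_A(|v|^2)\|_{L^4}\le C_\mu\|v_k-v\|_{L^4}\to0$; subtracting the equations for $w_k:=\mathcal T(v_k)$ and $w:=\mathcal T(v)$, testing with $w_k-w$ and using coercivity, the difference of the $v$-dependent terms is controlled by $\|(\mu_A(|v_k|^2)-\mu_A(|v|^2))(b+w_k)\|_{H^{-1}}\lesssim\|\mu_A(|v_k|^2)-\mu_A(|v|^2)\|_{L^4}\|b+w_k\|_{L^2}$ (H\"older with $\tfrac34=\tfrac14+\tfrac12$ and $L^{4/3}(U)\hookrightarrow H^{-1}(U)$), which tends to $0$; hence $\|w_k-w\|_{1,h}\to0$ and $\mathcal T:L^4(U)\to L^4(U)$ is continuous. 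Schauder's fixed-point theorem then produces $w\in B$ with $\mathcal T(w)=w$, i.e.\ a solution $\Psi_\tau^{T,n+1}\in H_0^1(U)$ of \eqref{eqn:timediscrete_truncate}; no uniqueness is asserted, in line with the statement.

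I expect the one genuinely delicate ingredient to be the uniform coercivity of $\mathcal A_v$, since one must absorb both the \emph{wrong-sign} second-order rotation term $-\tfrac{\Omega^2\epsilon^2}{2}(L_zw,L_zw)$ and the indefinite potential $\tfrac12(Vw,w)$; this is precisely where the smallness of $\epsilon$ (so that $\epsilon_0,\epsilon_1>0$ in \eqref{eqn:ep}) and the norm equivalences \eqref{eqn:equi1}--\eqref{eqn:equi2} enter. Everything else is a routine blend of Lax--Milgram, the compact Sobolev embedding, and the global Lipschitz/boundedness of the cut-off from Lemma~\ref{lem:cutoff_pro}. A fully equivalent alternative would be a Faedo--Galerkin scheme in which the finite-dimensional nonlinear systems are solved via Brouwer's theorem (in its ``acute-angle'' form, using the same coercivity estimate), followed by passage to the limit through the compact embedding and the Lipschitz bound on $\mu_A$.
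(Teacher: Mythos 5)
Your argument is correct, but it follows a genuinely different route from the paper. The paper proves the lemma by a Faedo--Galerkin construction: it first rewrites \eqref{eqn:timediscrete_truncate} in terms of $\widetilde{\Psi}_\tau^{T,n}$, solves the resulting nonlinear problem in finite-dimensional subspaces via the acute-angle form of Brouwer's fixed point theorem (Lemma \ref{lem:Brouwer}), using the same smallness condition on $\epsilon$ and the same one-sided part of \eqref{eqn:equi1} that you use to absorb the wrong-sign $L_z^2$ term, and then passes to the limit through weak $H^1$ / strong $L^2$ compactness, handling the truncated nonlinearity with the Vitali convergence theorem (Lemma \ref{lem:Vit}); a final step invokes elliptic regularity to upgrade to $H^2$, which goes beyond what the lemma actually states. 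You instead linearize by freezing the cut-off coefficient, solve the frozen problem by complex Lax--Milgram with a coercivity constant uniform in the frozen argument (which is where \eqref{eqn:ep} and $\lambda>0$, $\mu_A\ge 0$ enter, exactly as in the paper's sign analysis), and close with Schauder's theorem in $L^4(U)$ using the compact embedding $H_0^1\hookrightarrow\hookrightarrow L^4$ and the global Lipschitz bound \eqref{eqn:cutoff_pro_b} of Lemma \ref{lem:cutoff_pro} for continuity of the solution map. What your route buys: the uniform Lax--Milgram bound replaces the paper's appeal to an energy-conservation/boundedness argument for the Galerkin approximants, and the compact embedding plus the Lipschitz property of $\mu_A$ replaces Vitali's theorem, so the limit passage is more elementary and entirely local to a single time step. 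What the paper's route buys: the Galerkin machinery is set up so that the same scheme also delivers the $H^2$ regularity of $\Psi_\tau^{T,n+1}$ that is used later (e.g.\ for the $L^\infty$ control via $H^2$ in the convergence proofs), which your $H_0^1$-level Schauder argument does not provide; since the statement only claims existence in $H_0^1(U)$, this is not a gap in your proof, and indeed the Galerkin/Brouwer alternative you sketch at the end is essentially the paper's proof.
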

\begin{proof}
See the proof in Appendix B. 
\end{proof}

In the following, we establish the error estimates for the solution of the truncated time-discrete method.

\begin{lemma}\label{lem:timediscrete_truncate_err}
Suppose that \(\Psi\) is a solution of \eqref{eq:model2} satisfying \eqref{eqn:reg}, and denote by \(\Psi_{\tau}^{T,n+1}\), $1\leq n\leq N-1$ the solution of \eqref{eqn:timediscrete_truncate}. Then, the following estimates hold
 \begin{align}\label{eqn:timediscrete_truncate_err}
  \left\|\Psi^m-\Psi_{\tau}^{T,m}\right\|_{H^2} \leq C\tau^2, \qquad 0\leq m\leq N, 
 \end{align}
 where $C>0$ is a constant independent of $\tau$. 
\end{lemma}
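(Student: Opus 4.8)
The plan is to derive the $H^2$-error estimate for the truncated time-discrete scheme by an energy argument applied to the error equation, using the consistency of the second-order difference operators and the uniform boundedness of $\mu_A$ and its Lipschitz constant from Lemma~\ref{lem:cutoff_pro}. First I would introduce the error $e_\tau^n := \Psi^n - \Psi_\tau^{T,n}$ and write down the equation it satisfies: subtracting \eqref{eqn:timediscrete_truncate} from the exact equation \eqref{eq:model2} evaluated at $t_n$, one gets $\epsilon^2 \delta_{t\overline t}^2 e_\tau^n - \Delta \widetilde e_\tau^n + \frac{1}{\epsilon^2}\widetilde e_\tau^n + V\widetilde e_\tau^n - 2\mathrm{i}\Omega\epsilon^2 L_z\delta_{\hat t}e_\tau^n - \Omega^2\epsilon^2 L_z^2\widetilde e_\tau^n = \mathcal{R}^n + \mathcal{N}^n$, where $\mathcal{R}^n$ collects the truncation errors of the time-discretization (terms like $\epsilon^2(\delta_{t\overline t}^2\Psi^n - \partial_{tt}\Psi(t_n))$, $\Delta(\widetilde\Psi^n - \Psi(t_n))$, $L_z(\delta_{\hat t}\Psi^n - \partial_t\Psi(t_n))$, etc.) and $\mathcal{N}^n$ is the difference of the nonlinear terms. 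By Taylor expansion with integral remainder and the regularity \eqref{eqn:reg}, each piece of $\mathcal{R}^n$ is $O(\tau^2)$ in the relevant norm; since we want an $H^2$ estimate, I would keep careful track that the consistency bounds hold in $L^2$ after hitting the equation with $\Delta$, which is where the $H^3$/$H^4$ regularity of $\Psi,\Psi_t,\Psi_{tt}$ enters.

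The core of the argument is a discrete energy estimate. I would test the error equation with $\delta_{\hat t}e_\tau^n = (e_\tau^{n+1}-e_\tau^{n-1})/(2\tau)$ and take real parts, exactly as in the proof of Theorem~\ref{thm-timediscrete-conservation}: the rotational Coriolis term drops out by \eqref{thm-timediscrete-conservation-pf2}, the centrifugal term telescopes via \eqref{thm-timediscrete-conservation-pf3}, and the remaining linear terms produce the discrete energy quantity $\mathcal{E}^n := \epsilon^2\|\delta_t e_\tau^n\|_{L^2}^2 + \tfrac12(|\nabla e_\tau^{n+1}|^2 + \cdots)$ (with the $-\Omega^2\epsilon^2\|L_z\cdot\|^2$ term absorbed using $\epsilon < C_{U_1}/\Omega$-type smallness, mirroring Theorem~\ref{thm:boundedness}). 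The nonlinear term $\mathcal{N}^n$ is controlled by splitting $\frac{|\Psi^{n+1}|^2+|\Psi^{n-1}|^2}{2}\widetilde\Psi^n - \frac{\mu_A(|\Psi_\tau^{T,n+1}|^2)+\mu_A(|\Psi_\tau^{T,n-1}|^2)}{2}\widetilde\Psi_\tau^{T,n}$ into a part where the cutoff acts on the exact solution (which equals the true coefficient since $\|\Psi\|_{L^\infty} < K_0$) plus a remainder, then using \eqref{eqn:cutoff_pro_b} and boundedness to bound $\|\mathcal{N}^n\|_{L^2} \lesssim \|e_\tau^{n+1}\|_{L^2} + \|e_\tau^n\|_{L^2} + \|e_\tau^{n-1}\|_{L^2}$. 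Pairing with $\delta_{\hat t}e_\tau^n$, summing over $n$, and applying the discrete Gr\"onwall inequality yields $\max_m(\|\delta_t e_\tau^{m}\|_{L^2} + \|e_\tau^m\|_{1,h}) \le C\tau^2$, provided the initialization error $\|e_\tau^1\|$ is $O(\tau^2)$, which follows from the Taylor-based choice of $\Psi_\tau^1$ in \eqref{eq:timediscrete_initial}.

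To upgrade from $H^1$ to $H^2$, I would run the same energy argument on the equation obtained by applying $\Delta$ (or, equivalently, test the error equation with $\Delta\delta_{\hat t}e_\tau^n$ after noting $e_\tau^n \in H_0^1$ and using elliptic regularity on the convex polygon $U$ so that $\|e_\tau^n\|_{H^2} \lesssim \|\Delta e_\tau^n\|_{L^2} + \|e_\tau^n\|_{L^2}$). The commutators $[\Delta, V]$, $[\Delta, L_z]$, $[\Delta, L_z^2]$ and the derivative of the nonlinearity generate lower-order terms that are all absorbed by the already-established $H^1$-bound plus Gr\"onwall; again the Coriolis and centrifugal structure (Lemma~\ref{lem:xx}, Lemma~\ref{lem:relations}) keeps the leading rotational contributions skew/telescoping. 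The main obstacle I anticipate is precisely this $H^2$-level estimate: unlike in the conservation proof, one must carefully handle the first-order operator $L_z$ after commuting with $\Delta$ — $[\Delta, L_z]$ is again a first-order operator (indeed $L_z$ commutes with $\Delta$ up to bounded terms on a rotationally-adapted description, but boundary effects on the truncated domain $U$ require care) — and one must verify that the cutoff nonlinearity $\mu_A(|w|^2)w$, though only globally Lipschitz, is smooth enough (it is $C^\infty$ by Lemma~\ref{lem:cutoff_pro}) to permit differentiation and $H^2$-stable estimates with constants depending only on $K_0$. Once these commutator and nonlinear terms are shown to be lower-order, the discrete Gr\"onwall inequality closes the argument and gives \eqref{eqn:timediscrete_truncate_err}.
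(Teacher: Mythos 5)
Your proposal follows essentially the same route as the paper's proof: the same error-equation setup with truncation and nonlinear remainders, the same first-level energy estimate obtained by testing with $\delta_{\hat t}e_\tau^{T,n}$ (Coriolis term vanishing, centrifugal term telescoping, cutoff Lipschitz bound on the nonlinearity, discrete Gr\"onwall), and the same $H^2$ upgrade by testing with $\delta_{\hat t}\Delta e_\tau^{T,n}$ and closing with Gr\"onwall. Your explicit attention to the commutator and boundary issues at the $H^2$ level is somewhat more careful than the paper's presentation, but it is the same argument.
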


\begin{proof}
 Denote $e_{\tau}^{T,n}=\Psi^n-\Psi_{\tau}^{T,n}$, then from~\eqref{eq:model2} and~\eqref{eqn:timediscrete_truncate}, we have
\begin{align}\label{eqn:timediscrete_truncate_err_pf1}
\epsilon^2\delta_{t\bar{t}}^2e_{\tau}^{T,n}-\Delta \widetilde{e}_{\tau}^{T,n}+
	\frac{1}{\epsilon^2}\widetilde{e}_{\tau}^{T,n}
	+V \widetilde{e}_{\tau}^{T,n}-2\text{i}\Omega \epsilon^2L_z\delta_{\hat{t}}e_{\tau}^{T,n}-\Omega^2\epsilon^2L_z^2\widetilde{e}_{\tau}^{T,n} +\mathcal{L}_\tau^{T,n}+\mathcal{N}_{\tau}^{T,n}=0,	
\end{align}
where
\begin{align}
\mathcal{L}_\tau^{T,n}:=\,&
 \epsilon^2\left(\partial_{tt}\Psi^n-\delta_{t\bar{t}}^2\Psi^{n}\right)-\left(\Delta\Psi^{n}-\Delta\widetilde{\Psi}^{n}\right)+\left(\frac{1}{\epsilon^2}+V\right)\left(\Psi^{n}-\widetilde{\Psi}^{n}\right)-2\text{i}\Omega \epsilon^2\left(L_z\partial_{t}\Psi^n-L_z\delta_{\hat{t}}\Psi^n\right) \nn\\
 &-\Omega^2\epsilon^2\left(L_z^2\Psi^{n}-L_z^2\widetilde{\Psi}^{n}\right)+\lambda\,\left[\left|\Psi^{n}\right|^2\Psi^{n}-\frac{\left|\Psi^{ n+1}\right|^2+\left|\Psi^{n-1}\right|^2}{2}\widetilde{\Psi}^{n}\right],\label{eqn:timediscrete_truncate_err_pf2}\\
\mathcal{N}_{\tau}^{T,n}:=\,&\lambda\,\left[\frac{\left|\Psi^{ n+1}\right|^2+\left|\Psi^{n-1}\right|^2}{2}\widetilde{\Psi}^{n}-\frac{\mu_A\left(\left|\Psi^{T, n+1}_\tau\right|^2\right)+\mu_A\left(\left|\Psi^{T, n-1}_\tau\right|^2\right)}{2}\widetilde{\Psi}_\tau^{T, n}\right]. \label{eqn:timediscrete_truncate_err_pf3}
\end{align}
Using the definition of the truncation function in \eqref{eqn:cutoff}, we have
\begin{align}\label{eqn:timediscrete_truncate_err_pf4}
\mathcal{N}_{\tau}^{T,n}=\,&\lambda\,\left[\frac{\left|\Psi^{ n+1}\right|^2+\left|\Psi^{n-1}\right|^2}{2}\widetilde{\Psi}^{n}-\frac{\mu_A\left(\left|\Psi^{T, n+1}_\tau\right|^2\right)+\mu_A\left(\left|\Psi^{T, n-1}_\tau\right|^2\right)}{2}\widetilde{\Psi}_\tau^{T, n}\right]\nn\\
=\,&\lambda\,\left[\frac{\mu_A\left(\left|\Psi^{ n+1}\right|^2\right)+\mu_A\left(\left|\Psi^{n-1}\right|^2\right)}{2}-\frac{\mu_A\left(\left|\Psi^{T, n+1}_\tau\right|^2\right)+\mu_A\left(\left|\Psi^{T, n-1}_\tau\right|^2\right)}{2}\right]\widetilde{\Psi}^{n} \nn\\
&+\,\lambda\,\left[\frac{\mu_A\left(\left|\Psi^{ T,n+1}\right|^2\right)+\mu_A\left(\left|\Psi^{T,n-1}\right|^2\right)}{2}\right]\frac{e_\tau^{T, n+}+e_\tau^{T, n-1}}{2}.
\end{align}
By using the properties of $\mu_A(\cdot)$ shown in Lemma \ref{lem:cutoff_pro}, we can derive
\begin{align}\label{eqn:timediscrete_truncate_err_pf5}
\mathcal{N}_{\tau}^{T,n}=\,&\lambda\,\left[\frac{\mu_A\left(\left|\Psi^{ n+1}\right|^2\right)-\mu_A\left(\left|\Psi^{T, n+1}_\tau\right|^2\right)}{2}+\frac{\mu_A\left(\left|\Psi^{n-1}\right|^2\right)-\mu_A\left(\left|\Psi^{T, n-1}_\tau\right|^2\right)}{2}\right]\widetilde{\Psi}^{n} \nn\\
&+\,\lambda\,\left[\frac{\mu_A\left(\left|\Psi^{ T,n+1}\right|^2\right)+\mu_A\left(\left|\Psi^{T,n-1}\right|^2\right)}{2}\right]\frac{e_\tau^{T, n+1}+e_\tau^{T, n-1}}{2} \nn\\
\leq \, & \lambda K_0\left[\frac{C_{\mu}\left|\Psi^{ n+1}-\Psi_\tau^{T, n+1}\right|}{2}+\frac{C_{\mu}\left|\Psi^{ n-1}-\Psi_\tau^{T,n-1}\right|}{2}\right]+\frac{\lambda C_{M}}{2}\left(\left|e_{\tau}^{T, n+1}\right|+\left|e_{\tau}^{T, n-1}\right|\right) \nn\\
\leq \, & \frac{\lambda C_{\mu}K_0}{2}\left(\left|e_{\tau}^{T, n+1}\right|+\left|e_{\tau}^{T, n-1}\right|\right)+\frac{\lambda C_{M}}{2}\left(\left|e_{\tau}^{T, n+1}\right|+\left|e_{\tau}^{T, n-1}\right|\right),
\end{align}
which further implies that 
\begin{align}\label{eqn:timediscrete_truncate_err_pf6}
\left|\mathcal{N}_{\tau}^{T,n}\right|_{H^s} \leq \, & C_{\mathcal{N},\tau}\left(\left|e_{\tau}^{T, n+1}\right|_{H^s}+\left|e_{\tau}^{T, n-1}\right|_{H^s}\right), \quad s=0, 1,
\end{align}
where $C_{\mathcal{N},\tau}$ is a positive constant independent of $\tau$.

Taking the inner product of \eqref{eqn:timediscrete_truncate_err_pf1} with $\delta_{\hat{t}}e_{\tau}^{T,n}$, and selecting the real part, we have
\begin{align}\label{eqn:timediscrete_truncate_err_pf7}
&\epsilon^2 \text{Re}\left(\delta_{t\bar{t}}^2 e_{\tau}^{T,n},\delta_{\hat{t}}e_{\tau}^{T,n}\right)+\text{Re}\left(\nabla\widetilde{e}_{\tau}^{T,n},\nabla \delta_{\hat{t}}e_{\tau}^{T,n}\right)+
	\frac{1}{\epsilon^2}\text{Re}\left(\widetilde{e}_{\tau}^{T,n},\delta_{\hat{t}}e_{\tau}^{T,n}\right)+V \text{Re}\left(\widetilde{e}_{\tau}^{T,n},\delta_{\hat{t}}e_{\tau}^{T,n}\right) \nn\\
    &-2\Omega \epsilon^2\text{Re}\left(\text{i}L_z\delta_{\hat{t}}e_{\tau}^{T,n},\delta_{\hat{t}}e_{\tau}^{T,n}\right)
    -\Omega^2\epsilon^2\text{Re}\left(L_z^2\widetilde{e}_{\tau}^{T,n} ,\delta_{\hat{t}}e_{\tau}^{T,n}\right)+\text{Re}\left(\mathcal{L}_\tau^{T,n},\delta_{\hat{t}}e_{\tau}^{T,n}\right)+\text{Re}\left(\mathcal{N}_{\tau}^{T,n},\delta_{\hat{t}}e_{\tau}^{T,n}\right)=0.
\end{align}
Then, using Lemma \ref{lem:relations}, and thanks to 
\begin{align}\label{eqn:timediscrete_truncate_err_pf8}
	-\text{Re}\left(\text{i}L_z\delta_{\hat t}e_\tau^{T,n}, \delta_{\hat t}e_\tau^{T,n}\right) = 0, \qquad 
    -\text{Re}\left(L_z^2\widetilde{e}_\tau^{T,n}, \delta_{\hat t}e_\tau^{T,n}\right)
	= -\frac{\left\|L_z e_\tau^{T,n+1}\right\|_{L^2}^2-\left\|L_z e_\tau^{T,n-1}\right\|_{L^2}^2}{4\tau},
\end{align}
there holds 
\begin{align}\label{eqn:timediscrete_truncate_err_pf9}
	&	\epsilon^2\frac{\left\|\delta_t e_{\tau}^{T,n}\right\|^2_{L^2}-\left\|\delta_t e_{\tau}^{T,n-1}\right\|^2_{L^2}}{2\tau}
	+\frac{\left\| \nabla e_{\tau}^{T,n+1}\right\|_{L^2}^2-\left\|\nabla e_\tau^{T,n-1}\right\|_{L^2}^2}{4\tau}
	+\left(\frac{1}{\epsilon^2}+|V|\right)\frac{\left\| e_{\tau}^{T,n+1}\right\|_{L^2}^2-\left\| e_{\tau}^{T,n-1}\right\|_{L^2}^2}{4\tau } \nn\\
	&
	-\Omega^2\epsilon^2\frac{\left\|L_z e_{\tau}^{T,n+1}\right\|_{L^2}^2-\left\|L_z e_{\tau}^{T,n-1}\right\|_{L^2}^2}{4\tau}+\text{Re}\left(\mathcal{L}_\tau^{T,n},\delta_{\hat{t}}e_{\tau}^{T,n}\right)+\text{Re}\left(\mathcal{N}_{\tau}^{T,n},\delta_{\hat{t}}e_{\tau}^{T,n}\right)=0. 
\end{align}
From~\eqref{eqn:timediscrete_truncate_err_pf6}, we have
\begin{align}\label{eqn:timediscrete_truncate_err_pf10}
    -\text{Re}\left(\mathcal{N}_{\tau}^{T,n},\delta_{\hat{t}}e_{\tau}^{T,n}\right) \leq 
    C\left(\left\|e_{\tau}^{T, n+1}\right\|_{0}^2+\left\|e_{\tau}^{T, n-1}\right\|_{0}^2\right)+C\left\|\delta_{\hat{t}}e_{\tau}^{T,n}\right\|_{0}^2.
\end{align}
Using the Taylor expansion, we have
\begin{align}\label{eqn:timediscrete_truncate_err_pf11}
-\text{Re}\left(\mathcal{L}_\tau^{T,n},\delta_{\hat{t}}e_{\tau}^{T,n}\right)\leq C\tau^4+C\left\|\delta_{\hat{t}}e_{\tau}^{T,n}\right\|_{0}^2.
\end{align}
For convenience, we denote 
\begin{align}\label{eqn:timediscrete_truncate_err_pf11}
    \mathscr E_\tau^{T, n}:=&2\epsilon^2\left\|\delta_t e_{\tau}^{T,n}\right\|^2_{L^2}
    +\left\{ \left\| \nabla e_{\tau}^{T,n+1}\right\|_{L^2}^2+\left\| \nabla e_{\tau}^{T,n}\right\|_{L^2}^2-\Omega^2\epsilon^2\left[\left\|L_z e_{\tau}^{T,n+1} \right\|_{L^2}^2-\left\|L_z e_{\tau}^{T,n} \right\|_{L^2}^2\right] \right\}\nn \\
    &+\left(\frac{1}{\epsilon^2}+V\right)\left[\left\| e_{\tau}^{T,n+1}\right\|_{L^2}^2+\left\| e_{\tau}^{T,n}\right\|_{L^2}^2\right] . 
\end{align}
The condition \eqref{eqn:ep} ensures that the second term in \eqref{eqn:timediscrete_truncate_err_pf11} remains positive.
Then, from \eqref{eqn:timediscrete_truncate_err_pf9}-\eqref{eqn:timediscrete_truncate_err_pf11}, we have 
\begin{align}\label{eqn:timediscrete_truncate_err_pf12}
\mathscr E_\tau^{T, n}\leq \mathscr E_\tau^{T, n-1}
+C\tau\left(\mathscr E_\tau^{T, n}+\mathscr E_\tau^{T, n-1}\right)+C\tau^5. 
\end{align}
By applying the discrete Gronwall inequality to \eqref{eqn:timediscrete_truncate_err_pf12}, we obtain  
\begin{align}\label{eqn:timediscrete_truncate_err_pf13}  
    \mathscr{E}_\tau^{T, n} \leq C \tau^4,  
\end{align}  
provided that \(\tau\) is sufficiently small, which further implies that 
\begin{align}\label{eqn:timediscrete_truncate_err_pf14}  
    \left\|\delta_t e_{\tau}^{T,n}\right\|_{L^2}
    +\left\| e_{\tau}^{T,n+1}\right\|_{H^1}\leq C\tau^2. 
\end{align}


We then establish the $H^2$-norm error estimates. 
Taking the inner product of \eqref{eqn:timediscrete_truncate_err_pf1} with $\delta_{\hat{t}}\Delta e_{\tau}^{T,n}$, and selecting the real part, we have
\begin{align}\label{eqn:timediscrete_truncate_err_pf15} 
	&	\epsilon^2\frac{\left\|\delta_t\nabla e_{\tau}^{T,n}\right\|^2_{L^2}-\left\|\delta_t \nabla e_{\tau}^{T,n-1}\right\|^2_{L^2}}{2\tau}
	+\frac{\left\| \Delta e_{\tau}^{T,n+1}\right\|_{L^2}^2-\left\|\Delta e_\tau^{T,n-1}\right\|_{L^2}^2}{4\tau}
	+\left(\frac{1}{\epsilon^2}+V\right)\frac{\left\| \nabla e_{\tau}^{T,n+1}\right\|_{L^2}^2-\left\|\nabla e_{\tau}^{T,n-1}\right\|_{L^2}^2}{4\tau } \nn\\
	&
	-\Omega^2\epsilon^2\frac{\left\| L_z \nabla e_{\tau}^{T,n+1}\right\|_{L^2}^2-\left\| L_z \nabla e_{\tau}^{T,n-1}\right\|_{L^2}^2}{4\tau}+\text{Re}\left(  \mathcal{L}_\tau^{T,n}, \delta_{\hat{t}} \Delta e_{\tau}^{T,n}\right)+\text{Re}\left( \mathcal{N}_{\tau}^{T,n}, \delta_{\hat{t}}\Delta e_{\tau}^{T,n}\right)=0. 
\end{align}
Similar to ~\eqref{eqn:timediscrete_truncate_err_pf10}, we have
\begin{align}\label{eqn:timediscrete_truncate_err_pf16}
    -\text{Re}\left(\mathcal{N}_{\tau}^{T,n},\delta_{\hat{t}}\Delta e_{\tau}^{T,n}\right) \leq 
    C\left(\left\|e_{\tau}^{T, n+1}\right\|_{0}^2+\left\|e_{\tau}^{T, n-1}\right\|_{0}^2+\left\|\nabla e_{\tau}^{T, n+1}\right\|_{0}^2+\left\|\nabla e_{\tau}^{T, n-1}\right\|_{0}^2\right)+C\left\|\nabla \delta_{\hat{t}}e_{\tau}^{T,n}\right\|_{0}^2.
\end{align}
Using the Taylor expansion, there holds 
\begin{align}\label{eqn:timediscrete_truncate_err_pf17}
-\text{Re}\left(\mathcal{L}_\tau^{T,n},\delta_{\hat{t}} \Delta e_{\tau}^{T,n}\right)\leq C\tau^4+C\left\|\delta_{\hat{t}}\nabla e_{\tau}^{T,n}\right\|_{0}^2.
\end{align}
For convenience, we denote 
\begin{align}\label{eqn:timediscrete_truncate_err_pf18}
    \mathscr H_\tau^{T, n}:= & 2\epsilon^2\left\|\delta_t \nabla e_{\tau}^{T,n}\right\|^2_{L^2}
    +\left[\left\| \Delta e_{\tau}^{T,n+1}\right\|_{L^2}^2+\left\| \Delta e_{\tau}^{T,n}\right\|_{L^2}^2-\Omega^2\epsilon^2\left(\left\|L_z \nabla e_{\tau}^{T,n+1}\right\|_{L^2}^2+\left\|L_z \nabla e_{\tau}^{T,n}\right\|_{L^2}^2\right)\right]\nn\\
    &+\left(\frac{1}{\epsilon^2}+V\right)\left(\left\| \nabla e_{\tau}^{T,n+1}\right\|_{L^2}^2+\left\| \nabla e_{\tau}^{T,n}\right\|_{L^2}^2\right). 
\end{align}
The condition \eqref{eqn:ep} can guarantee that the second term in \eqref{eqn:timediscrete_truncate_err_pf18} is positive.
From \eqref{eqn:timediscrete_truncate_err_pf15}-\eqref{eqn:timediscrete_truncate_err_pf18}, we have 
\begin{align}\label{eqn:timediscrete_truncate_err_pf19}
\mathscr H_\tau^{T, n}\leq \mathscr H_\tau^{T, n-1}
+C\tau\left(\mathscr H_\tau^{T, n}+\mathscr H_\tau^{T, n-1}\right)+C\tau^5. 
\end{align}
By applying the discrete Gronwall inequality to \eqref{eqn:timediscrete_truncate_err_pf19}, we obtain  
\begin{align}\label{eqn:timediscrete_truncate_err_pf20}  
    \mathscr{H}_\tau^{T, n} \leq C \tau^4,  
\end{align}  
provided that \(\tau\) is sufficiently small, which further implies that 
\begin{align}\label{eqn:timediscrete_truncate_err_pf21}  
    \left\|\delta_t \nabla e_{\tau}^{T,n}\right\|_{L^2}
    +\left\| \Delta e_{\tau}^{T,n+1}\right\|_{L^2}+\left\| \nabla e_{\tau}^{T,n+1}\right\|_{L^2}\leq C\tau^2. 
\end{align}
Finally, by combining \eqref{eqn:timediscrete_truncate_err_pf14} and \eqref{eqn:timediscrete_truncate_err_pf21}, we obtain  
\begin{align}\label{eqn:timediscrete_truncate_err_pf22}  
    \left\|\delta_t  e_{\tau}^{T,n}\right\|_{H^1}  
    +\left\|e_{\tau}^{T,n+1}\right\|_{H^2}\leq C\tau^2.  
\end{align}  
This concludes the proof.
\end{proof}

Next, we establish the $H^2$-norm error estimate for the solution of the time-discrete method \eqref{eqn:timediscrete}. 
\begin{lemma}\label{lem:timediscrete_err}
Suppose that \(\Psi\) is a solution of \eqref{eq:model2} satisfying \eqref{eqn:reg}. Then the time-discrete method \eqref{eqn:timediscrete} has a unique solution 
\(\Psi_{\tau}^{n+1}\) satisfying 
 \begin{align}\label{eqn:timediscrete_err}
   \left\|\Psi^m-\Psi_{\tau}^{m}\right\|_{H^2} \leq C\tau^2, \qquad 0\leq m\leq N, 
 \end{align}
 where $C>0$ is a constant independent of $\tau$. 
\end{lemma}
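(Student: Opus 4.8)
The plan is to deduce Lemma~\ref{lem:timediscrete_err} by a bootstrap from the truncated scheme \eqref{eqn:timediscrete_truncate}, whose solvability is already guaranteed by Lemma~\ref{lem:timediscrete_truncate_existence} and whose solution obeys the $H^2$ error bound $\|\Psi^m-\Psi_\tau^{T,m}\|_{H^2}\le C\tau^2$ of Lemma~\ref{lem:timediscrete_truncate_err}. The crucial point will be that this error bound, together with the two–dimensional Sobolev embedding $H^2(U)\hookrightarrow L^\infty(U)$ and the definition of $K_0$ in \eqref{eqn:timebound} (so that $\sup_m\|\Psi^m\|_{L^\infty}=K_0-1$), yields $\|\Psi_\tau^{T,m}\|_{L^\infty}\le K_0-1+C_S C\tau^2<K_0$ for all $m$, provided $\tau\le\tau_0$ with $\tau_0$ small. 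Hence the cut-off is \emph{inactive}: pointwise $|\Psi_\tau^{T,m}(\tx)|^2K_0^{-2}<1$, so $\chi\big(|\Psi_\tau^{T,m}(\tx)|^2K_0^{-2}\big)=1$ by \eqref{eqn:cutoffx}, and therefore $\mu_A(|\Psi_\tau^{T,m}|^2)=|\Psi_\tau^{T,m}|^2$ for $0\le m\le N$. Substituting this identity into \eqref{eqn:timediscrete_truncate} turns it verbatim into \eqref{eqn:timediscrete}; thus $\Psi_\tau^m:=\Psi_\tau^{T,m}$ solves the time-discrete method and inherits \eqref{eqn:timediscrete_err}. (The initial levels match because \eqref{eq:timediscrete_initial} is used in both formulations.)

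For uniqueness I would argue inductively on $n$. Since any solution of \eqref{eqn:timediscrete} that satisfies \eqref{eqn:timediscrete_err} has, for $\tau\le\tau_0$, $L^\infty$-norm strictly below $K_0$ at every level, it suffices to establish uniqueness within the class $\{\|\cdot\|_{L^\infty}\le K_0\}$. Assume the levels $m\le n$ are already pinned down (the base case $n=0,1$ is \eqref{eq:timediscrete_initial}), and let $\Psi_\tau^{n+1},\hat\Psi_\tau^{n+1}$ be two solutions of \eqref{eqn:timediscrete} with these data, both bounded by $K_0$ in $L^\infty$. Set $w=\Psi_\tau^{n+1}-\hat\Psi_\tau^{n+1}\in H_0^1(U)$; because the lower levels coincide, the difference equation has $\delta_{t\overline t}^2$ of the difference equal to $w/\tau^2$, $\widetilde{(\cdot)}^n$ equal to $w/2$, and $\delta_{\hat t}(\cdot)^n$ equal to $w/(2\tau)$. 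Taking the $L^2$ inner product of this difference equation with $w$, integrating the Laplacian by parts, and extracting the real part, the first-order rotation term vanishes exactly as in \eqref{thm-timediscrete-conservation-pf2}, the term $-\tfrac{\Omega^2\epsilon^2}{2}\|L_z w\|_{L^2}^2$ combines with $\tfrac12\|\nabla w\|_{L^2}^2$ to give at least $\epsilon_0\|\nabla w\|_{L^2}^2$ by virtue of \eqref{eqn:equi1} and \eqref{eqn:ep}, and $\tfrac{1}{2\epsilon^2}\|w\|_{L^2}^2+\tfrac12\mathrm{Re}(Vw,w)\ge\epsilon_1\|w\|_{L^2}^2$ likewise. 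For the cubic term, the uniform $L^\infty$ bound $K_0$ makes the difference of $\tfrac{|\Psi_\tau^{n+1}|^2+|\Psi_\tau^{n-1}|^2}{2}\widetilde\Psi_\tau^n$ pointwise at most $C K_0^2\,|w|$, so its contribution is $\le\lambda C K_0^2\|w\|_{L^2}^2$, with no gradient of $w$ entering. Collecting terms gives $\tfrac{\epsilon^2}{\tau^2}\|w\|_{L^2}^2+\epsilon_0\|\nabla w\|_{L^2}^2+\epsilon_1\|w\|_{L^2}^2\le C\|w\|_{L^2}^2$, hence $(\epsilon^2\tau^{-2}-C)\|w\|_{L^2}^2\le0$, which forces $w=0$ once $\tau^2<\epsilon^2/C$; this propagates uniqueness through level $N$.

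I expect the main obstacle to be precisely the implicit cubic nonlinearity: without an a priori bound on the two solutions being compared, the difference argument cannot close, and the sole purpose of the cut-off construction here is to supply that bound — through the $H^2$ error estimate of Lemma~\ref{lem:timediscrete_truncate_err} and the two-dimensional embedding $H^2\hookrightarrow L^\infty$ — while simultaneously guaranteeing that the truncation switches off, so that the truncated and original schemes coincide. A secondary point to monitor is the coercivity used both to keep the difference identity sign-definite and to absorb the potential and rotation contributions: it rests on the standing smallness hypothesis $\epsilon<C_{U_1}/\Omega$ (equivalently the splitting \eqref{eqn:ep}), since otherwise $-\Omega^2\epsilon^2\|L_z w\|_{L^2}^2$ would not be dominated by $\|\nabla w\|_{L^2}^2$. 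Finally one should check that the finitely many ``$\tau$ sufficiently small'' thresholds (from the $L^\infty$ reduction, from the Gronwall step in Lemma~\ref{lem:timediscrete_truncate_err}, and from the uniqueness inequality) can be imposed at once, which is immediate.
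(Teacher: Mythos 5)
Your proposal is correct and follows essentially the same route as the paper: the heart of the argument in both cases is that the $H^2$ error bound of Lemma~\ref{lem:timediscrete_truncate_err}, the embedding $H^2(U)\hookrightarrow L^\infty(U)$, and the ``$+1$'' built into the definition of $K_0$ in \eqref{eqn:timebound} force $\|\Psi_\tau^{T,m}\|_{L^\infty}\leq K_0$ for $\tau$ small, so the cut-off is inactive, \eqref{eqn:timediscrete_truncate} coincides with \eqref{eqn:timediscrete}, and $\Psi_\tau^m=\Psi_\tau^{T,m}$ inherits \eqref{eqn:timediscrete_err} --- exactly the paper's steps \eqref{eqn:timediscrete_err_pf2}--\eqref{eqn:timediscrete_err_pf5}. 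The one place you deviate is uniqueness: the paper proves uniqueness for the \emph{truncated} scheme via a difference inequality of the same energy type as the convergence proof (its \eqref{eqn:timediscrete_err_pf1}, needing only $\tau,\epsilon$ small) and then transfers it, whereas you prove uniqueness of the \emph{original} scheme directly, within the class of solutions obeying the $L^\infty$ bound $K_0$, by testing the one-level difference equation with $w$; your version is valid (the rotation term vanishes as in \eqref{thm-timediscrete-conservation-pf2}, coercivity rests on \eqref{eqn:ep}, and the cubic difference is controlled by $K_0^2$), but it buys this directness at the price of the coupling $\tau^2\lesssim\epsilon^2$, which is harmless here since the paper fixes $\epsilon$ and does not track $\epsilon$-dependence, and it shares with the paper the implicit restriction that uniqueness is asserted among solutions captured by the a priori $L^\infty$ bound.
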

\begin{proof}
Firstly, we establish the uniqueness of the solution \(\Psi_{\tau}^{T, n+1}\) to the truncated time-discrete method \eqref{eqn:timediscrete_truncate}.  
Let \(\Psi_{\tau}^{I, T, n+1}\) and \(\Psi_{\tau}^{II, T, n+1}\) be two solutions to the truncated time-discrete method \eqref{eqn:timediscrete_truncate}. Define their difference as \(E_{\tau}^{T, n+1}=\Psi_{\tau}^{I, T, n+1}-\Psi_{\tau}^{II, T, n+1}\) and assume that \(E_{\tau}^{T, m}=0\) for \(0\leq m\leq n\).  
Following a similar approach as in the previous convergence analysis, we obtain  
\begin{align}\label{eqn:timediscrete_err_pf1}
    &2\epsilon^2\left\|\delta_t E_{\tau}^{T,n}\right\|^2_{L^2}
    +\left[\left\| \nabla E_{\tau}^{T,n+1}\right\|_{L^2}^2+\left\| \nabla E_{\tau}^{T,n}\right\|_{L^2}^2-\Omega^2\epsilon^2\left\|L_z E_{\tau}^{T,n+1}\right\|_{L^2}^2-\Omega^2\epsilon^2\left\|L_z E_{\tau}^{T,n}\right\|_{L^2}^2\right]+\left(\frac{1}{\epsilon^2}+V\right)\left(\left\| E_{\tau}^{T,n+1}\right\|_{L^2}^2+\left\| E_{\tau}^{T,n}\right\|_{L^2}^2\right) \notag\\
    &\quad
    \leq C\tau\Bigg\{2\epsilon^2\left\|\delta_t E_{\tau}^{T,n-1}\right\|^2_{L^2}
    +\left[\left\| \nabla E_{\tau}^{T,n}\right\|_{L^2}^2+\left\| \nabla E_{\tau}^{T,n-1}\right\|_{L^2}^2-\Omega^2\epsilon^2\left\|L_z E_{\tau}^{T,n}\right\|_{L^2}^2-\Omega^2\epsilon^2\left\|L_z E_{\tau}^{T,n-1}\right\|_{L^2}^2\right]\nn\\
    &~~~~~~~~+\left(\frac{1}{\epsilon^2}+V\right)\left(\left\| E_{\tau}^{T,n}\right\|_{L^2}^2+\left\| E_{\tau}^{T,n-1}\right\|_{L^2}^2\right)
    \Bigg\}.
\end{align}  
This result further implies that \(E_{\tau}^{T,n+1}=\Psi_{\tau}^{I, T, n+1}-\Psi_{\tau}^{II, T, n+1}=0\) for sufficiently small \(\tau\) and $\epsilon$, thereby proving the uniqueness of the solution.  

Furthermore, from the error estimates \eqref{eqn:timediscrete_truncate_err} and using \eqref{eqn:timebound}, for sufficiently small \(\tau\), we obtain  
\begin{align}\label{eqn:timediscrete_err_pf2}
    \left\|\Psi_\tau^{T, n}\right\|_{L^\infty} \leq \left\|\Psi^{n}\right\|_{L^\infty}+
    \left\|\Psi^{n}-\Psi_\tau^{T, n}\right\|_{L^\infty} \leq \left\|\Psi^{n}\right\|_{L^\infty}+
    C\left\|\Psi^{n}-\Psi_\tau^{T, n}\right\|_{H^2} \leq K_0. 
\end{align}  
By the definition of the truncation function \(\mu_A(\cdot)\), it follows that  
\begin{align}\label{eqn:timediscrete_err_pf3}
\mu_A\left(\left|\Psi^{T, n+1}_\tau\right|^2\right)=\left|\Psi^{T, n+1}_\tau\right|^2,\qquad 
\mu_A\left(\left|\Psi^{T, n-1}_\tau\right|^2\right)=\left|\Psi^{T, n-1}_\tau\right|^2.
\end{align}  
Thus, we can rewrite \eqref{eqn:timediscrete_truncate} as  
\begin{align}\label{eqn:timediscrete_err_pf4}
		\epsilon^2\delta_{t\overline t}^2\Psi_\tau^{T, n}-\Delta \widetilde{\Psi}_\tau^{T, n}+
		\frac{1}{\epsilon^2}\widetilde{\Psi}_\tau^{T, n}
		+V\widetilde{\Psi}_\tau^{T, n}+\lambda\,\frac{\left|\Psi^{T, n+1}_\tau\right|^2+\left|\Psi^{T, n-1}_\tau\right|^2}{2}\widetilde{\Psi}_\tau^{T, n}
		-2\text{i}\Omega \epsilon^2L_z\delta_{\hat{t}}\Psi_\tau^{T, n}-\Omega^2\epsilon^2L_z^2\widetilde{\Psi}_\tau^{T, n}=0,
	\end{align}  
which is identical to the time-discrete method \eqref{eqn:timediscrete}. Consequently, the unique existence of the truncated solution \(\Psi_\tau^{T, n+1}\) to the truncated time-discrete method \eqref{eqn:timediscrete_truncate} directly implies the unique existence of \(\Psi_\tau^{n+1}\) for the time-discrete method \eqref{eqn:timediscrete}. Overall, we can conclude that 
\begin{align}\label{eqn:timediscrete_err_pf5}
    \Psi_\tau^{T, m} =\Psi_\tau^{m},\qquad \text{for all}~~m\geq 0. 
\end{align}
Therefore, \eqref{eqn:timediscrete_err} can be directly obtained from \eqref{eqn:timediscrete_truncate_err}. We have completed the proof.  
\end{proof}

\subsection{Error analysis for the full-discrete method}

Analogous to the time-discrete method, we first consider the truncated full-discrete method.
\begin{myDef}\label{Def:fullydisrete_truncated}
	(Truncated full-discrete method) Let $\Psi_{h,\tau}^{T,j}$ be a suitable interpolation of $\Psi_\tau^j$ for $j=0, 1$. Then for $n\geq 1$, we define the following truncated full-discrete equation, which is to find $\Psi_{h,\tau}^{T,n+1}\in V_h$, $1\leq n\leq N-1$ such that 
	\begin{align}\label{eqn:fullydiscrete_truncated}
			&\epsilon^2\left(\delta_{t\overline t}^2\Psi_{h,\tau}^{T,n}
			, \omega_h\right)
			+\left(\nabla \widetilde{\Psi}_{h,\tau}^{T,n}, \nabla \omega_h\right)_h
			+
			\frac{1}{\epsilon^2}\left(\widetilde{\Psi}_{h,\tau}^{T,n}, \omega_h\right)
			+
			\left(V\widetilde{\Psi}_{h,\tau}^n, \omega_h\right)
			+\lambda\,\left(\frac{\mu_A\left(\left|\Psi^{T,n+1}_{h,\tau}\right|^2\right)				
				+\mu_A\left(\left|\Psi^{T,n-1}_{h,\tau}\right|^2\right)}{2}\widetilde{\Psi}_{h,\tau}^{T,n}, \omega_h\right)\nn\\
			&-2\textup{i}\Omega \epsilon^2\left(L_z\delta_{\hat{t}}\Psi_{h,\tau}^{T,n}, \omega_h\right)_h-
			\Omega^2\epsilon^2\left(L_z\widetilde{\Psi}_{h,\tau}^{T,n}, L_z\omega_h\right)_h
	+\left\langle S^{T,n+1}, \omega_h\right\rangle=0,
	\qquad \forall\, \omega_h\in V_h,
	\end{align}
 where $\left\langle S^{T,n+1}, \omega_h\right\rangle$ is defined by 
\begin{equation}\label{eqn:fullydiscrete-stability_truncated}
\left\langle S^{T,n+1}, \omega_h\right\rangle:=
\begin{cases}
0,\qquad &\text{for conforming case,}\\
\Omega\epsilon^2\textup{Re}\left\langle\delta_{\hat{t}}\Psi_{h,\tau}^{T, n}, \omega_h\right\rangle
+\textup{i}\frac{\Omega\epsilon^2}{\tau}\textup{Im}\left\langle\Psi_{h,\tau}^{T, n+1}, \omega_h\right\rangle
,& \text{for nonconforming case}.\\
\end{cases}
\end{equation}
\end{myDef}

\begin{lemma}\label{lem:fullydiscrete_err_truncated}
 Suppose that $\Psi_{\tau}^{n+1}$, $1\leq n\leq N-1$ is a solution of \eqref{eqn:timediscrete}, and denote by $\Psi_{h,\tau}^{T,n+1}$, $1\leq n\leq N-1$ the solution of \eqref{eqn:fullydiscrete_truncated}. Then, the following estimates hold
\begin{align}\label{eqn:fullydiscrete_err_truncated}
   \left\|\Psi_{\tau}^m-\Psi_{h,\tau}^{T,m}\right\|_{L^2}\leq C\left(h\tau^2+h^2\right),~~
   \left\|I_h \Psi_{\tau}^m- \Psi_{h,\tau}^{T,m}\right\|_{1, h}\leq C\left(h\tau^2+h^2\right),
   ~~
\left\|\Psi_{\tau}^m-\Psi_{h,\tau}^{T,m}\right\|_{1, h} \leq Ch,\quad 0\leq m\leq N, 
 \end{align}
 where $C>0$ is a constant independent of $\tau$ and $h$.
\end{lemma}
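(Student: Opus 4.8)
The plan is to use the time--space error-splitting technique: by Lemma~\ref{lem:timediscrete_err} the time-discrete iterate $\Psi_\tau^n$ already approximates $\Psi^n$ to order $O(\tau^2)$ in $H^2$, so only the spatial discretization of $\Psi_\tau^n$ remains to be controlled. I would write
\[
\Psi_\tau^n-\Psi_{h,\tau}^{T,n}=\bigl(\Psi_\tau^n-I_h\Psi_\tau^n\bigr)+\bigl(I_h\Psi_\tau^n-\Psi_{h,\tau}^{T,n}\bigr)=:\eta^n+\xi^n,\qquad\xi^n\in V_h .
\]
Since $\Psi^n$ obeys \eqref{eqn:reg} and $\|\Psi^n-\Psi_\tau^n\|_{H^2}\le C\tau^2$, we get $\|\Psi_\tau^n\|_{H^2}\le C$; splitting $\eta^n=(\mathrm{Id}-I_h)\Psi^n+(\mathrm{Id}-I_h)(\Psi_\tau^n-\Psi^n)$ and using the interpolation bounds in \eqref{eqn:Ih_error} gives $\|\eta^n\|_{L^2}\le Ch^2$ and $\|\eta^n\|_{1,h}\le Ch$, the second summand — an interpolation error of an $O(\tau^2)$ function — entering only at order $O(h\tau^2)$ in $\|\cdot\|_{1,h}$. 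As $\Psi_{h,\tau}^{T,j}=I_h\Psi_\tau^j$ for $j=0,1$, we also have $\xi^0=\xi^1=0$. By the triangle inequality all three claimed estimates follow once $\|\xi^m\|_{1,h}\le C(h^2+h\tau^2)$ is proved.

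Next I would derive an error equation for $\xi^n$. The time-discrete equation \eqref{eqn:timediscrete} holds pointwise in $H_0^1(U)$, so I test it against an arbitrary $\omega_h\in V_h$ and integrate by parts elementwise on the $-\Delta\widetilde\Psi_\tau^n$, $L_z\delta_{\hat t}\Psi_\tau^n$, $L_z^2\widetilde\Psi_\tau^n$ terms, which produces $(\nabla\widetilde\Psi_\tau^n,\nabla\omega_h)_h$, $(L_z\delta_{\hat t}\Psi_\tau^n,\omega_h)_h$, $(L_z\widetilde\Psi_\tau^n,L_z\omega_h)_h$ plus edge functionals that vanish in the conforming case. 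Because $\|\Psi_\tau^m\|_{L^\infty}\le K_0$ for small $\tau$ (Lemma~\ref{lem:timediscrete_err} and \eqref{eqn:timebound}), the time-discrete nonlinearity equals $\lambda\,\tfrac{\mu_A(|\Psi_\tau^{n+1}|^2)+\mu_A(|\Psi_\tau^{n-1}|^2)}{2}\widetilde\Psi_\tau^n$. Subtracting \eqref{eqn:fullydiscrete_truncated} then gives an identity of exactly the form \eqref{eqn:fullydiscrete_truncated} solved by $\xi^n$, with stabilization source $\langle S^{\xi,n+1},\omega_h\rangle$ built from $\xi$ and a right-hand side $\langle\mathcal R^n,\omega_h\rangle$ collecting: (i) the $\eta^n$-contributions, the delicate one being $(\nabla\widetilde\eta^n,\nabla\omega_h)_h$, which is $0$ in the nonconforming case and bounded by $(O(h^2)\|\Psi^n\|_{H^3}+O(h\tau^2))\|\omega_h\|_{1,h}$ in the conforming case by \eqref{eqn:Ih_error} applied to the $H^3$-part and the crude interpolation bound to the $O(\tau^2)$-part; (ii) the nonconforming consistency functionals together with the $L_z$-related interpolation terms evaluated at $\Psi_\tau^n$, which by the superconvergence properties of the $EQ_1^{rot}$ interpolation and the splitting $\|\Psi_\tau^n\|_{H^3}\le\|\Psi^n\|_{H^3}+\|\Psi_\tau^n-\Psi^n\|_{H^3}$ (smooth part vs.\ $O(\tau^2)$-part, the latter only in the lower-order norm) are of size $O(h^2)+O(h\tau^2)$, the rotational ones being exactly cancelled by the $I_h\Psi_\tau^n$-component of $\langle S^{T,n+1},\omega_h\rangle$ — this is the design purpose of the stabilization in \eqref{eqn:fullydiscrete-stability}; and (iii) the nonlinear mismatch, which by $\|\mu_A\|_{L^\infty}\le C_M$ in \eqref{eqn:cutoff_pro_a} and the Lipschitz estimate \eqref{eqn:cutoff_pro_b} is dominated by $C\sum_{k\in\{n-1,n,n+1\}}\bigl(\|\eta^k\|_{L^2}+\|\xi^k\|_{L^2}\bigr)\|\omega_h\|_{L^2}$, requiring no $L^\infty$-bound on the numerical solution — which is the whole point of the cut-off.

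Then I would take $\omega_h=\delta_{\hat t}\xi^n$ and extract real parts. Just as in the proof of Theorem~\ref{thm-fullydiscrete-conservation}, Lemma~\ref{lem:relations} telescopes the inertial term $\epsilon^2(\delta_{t\overline t}^2\xi^n,\cdot)$ together with $(\nabla\cdot,\nabla\cdot)_h$, $\tfrac1{\epsilon^2}(\cdot,\cdot)$, $(V\cdot,\cdot)$ and $(L_z\cdot,L_z\cdot)_h$ into the forward difference of the discrete energy $\mathscr E_h^{\xi,n}$ obtained from \eqref{eqn-discrete-energy} by replacing $\Psi_{h,\tau}^n$ with $\xi^n$ and discarding the quartic term, while $-2\Omega\epsilon^2\mathrm{Re}(\mathrm{i}L_z\delta_{\hat t}\xi^n,\delta_{\hat t}\xi^n)_h=-\mathrm{Re}\langle S^{\xi,n+1},\delta_{\hat t}\xi^n\rangle$ by \eqref{eqn:fullydiscrete_ener_pf2}, so the stabilization source annihilates the antisymmetric rotation term exactly. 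Under the standing hypothesis $\epsilon<C_{U_1}/\Omega$ (cf.\ \eqref{eqn:ep}) together with \eqref{eqn:equi1}--\eqref{eqn:equi2}, $\mathscr E_h^{\xi,n}$ is equivalent to $\epsilon^2\|\delta_t\xi^n\|_{L^2}^2+|\xi^{n+1}|_{1,h}^2+|\xi^n|_{1,h}^2$. Estimating $\langle\mathcal R^n,\delta_{\hat t}\xi^n\rangle$ by Cauchy--Schwarz, \eqref{eqn:equi2}, Young's inequality and items (i)--(iii) — with a summation by parts in $n$ to absorb the $\delta_{\hat t}\xi^n$ that pairs with $(\nabla\widetilde\eta^n,\nabla\cdot)_h$ and with $(L_z\widetilde\eta^n,L_z\cdot)_h$ — and summing over $n$ with $\mathscr E_h^{\xi,1}=0$ leads to $\mathscr E_h^{\xi,n}\le C\tau\sum_{k\le n}\mathscr E_h^{\xi,k}+C(h^2+h\tau^2)^2$. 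The discrete Gronwall inequality (valid for $\tau$ small, with no $\tau$--$h$ coupling) then gives $\mathscr E_h^{\xi,n}\le C(h^2+h\tau^2)^2$, hence $\|\xi^m\|_{1,h}\le C(h^2+h\tau^2)$, and the three stated bounds follow by the triangle inequality with the $\eta$-estimates. Existence and uniqueness of $\Psi_{h,\tau}^{T,n+1}\in V_h$ follow as in Lemma~\ref{lem:timediscrete_truncate_existence} (Brouwer fixed point, the truncated nonlinearity being bounded) and from the same energy estimate applied to the difference of two solutions.

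I expect the main obstacle to be item (ii): controlling the nonconforming consistency functionals and the interpolation terms involving the rotational operators $L_z$ and $L_z^2$, and checking that $\langle S^{T,n+1},\omega_h\rangle$ is calibrated so that its $I_h\Psi_\tau^n$-component exactly removes their non-$O(h^2)$ part — without this cancellation the nonconforming scheme is neither conservative (Theorem~\ref{thm-fullydiscrete-conservation}) nor superclose. A secondary nuisance is that Lemma~\ref{lem:timediscrete_err} supplies only $H^2$- (not $H^3$-) closeness of $\Psi_\tau^n$ to $\Psi^n$, forcing every step that would use $H^3$ regularity to be split into an $H^3$-smooth part bounded via \eqref{eqn:reg} and an $O(\tau^2)$ remainder handled by the crude interpolation estimate; this split is precisely what produces the $h\tau^2$ term in the final rate.
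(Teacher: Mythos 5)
Your proposal follows essentially the same route as the paper's proof: the same splitting $\Psi_\tau^n-\Psi_{h,\tau}^{T,n}=(\Psi_\tau^n-I_h\Psi_\tau^n)+(I_h\Psi_\tau^n-\Psi_{h,\tau}^{T,n})$, the same error equation obtained by testing the time-discrete equation against $V_h$ with elementwise integration by parts and subtracting \eqref{eqn:fullydiscrete_truncated}, the same test function $\omega_h=\delta_{\hat t}\xi^n$ with the telescoping identities of Lemma \ref{lem:relations}, the same Lipschitz treatment of the truncated nonlinearity via Lemma \ref{lem:cutoff_pro} (so no $L^\infty$ bound on the discrete solution is needed), the same ``$H^3$-smooth part plus $O(\tau^2)$ remainder'' splitting of $\Psi_\tau^n$ that produces the $h\tau^2$ contribution, and discrete Gronwall followed by the triangle inequality with \eqref{eqn:Ih_error}. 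The one point where your description differs from the actual mechanics is the rotational/stabilization boundary terms: the exact cancellation provided by the stabilization (as in \eqref{eqn:fullydiscrete_ener_pf2}) occurs only at the level of the discrete error $\xi^n$, whereas its $I_h\Psi_\tau^n$-component and the consistency functionals of $\Psi_\tau^n$ arising from $L_z$, $L_z^2$ do \emph{not} cancel identically; the paper instead bounds this whole combination by the nonconforming edge-functional consistency estimate, giving the same $O(h^2+h\tau^2)$ size you assign to it, so your argument is unaffected (the paper also uses the Ritz-projection comparison $\|R_h\Psi-I_h\Psi\|_{H^1}\leq Ch^2\|\Psi\|_{H^3}$ for the $L_z\delta_{\hat t}\rho$ term, which plays the role of your ``superconvergence of the $EQ_1^{rot}$ interpolation''). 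Your summation-by-parts treatment of the terms paired with $\|\delta_{\hat t}\xi^n\|_{1,h}$ is, if anything, a more careful bookkeeping than the paper's direct Young absorption of those terms into the energy recursion.
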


\begin{proof}
For simplicity, we restrict our attention to the more intricate nonconforming case. Using \eqref{eqn:timediscrete_err}, we easily obtain 
\begin{align}\label{eqn:timediscrete_bound}
    \left\|\Psi_\tau^n\right\|_{H^2}+\left\|\delta_{t\overline t}^2 \Psi_\tau^n\right\|_{H^2}\leq C,\quad \left\|\Psi_\tau^n\right\|_{H^3}\leq C,\quad \left\|\delta_{\hat{t}} \Psi^n\right\|_{H^3}\leq C.
\end{align}
 For convenience, we split the error
 \begin{equation}\label{eqn:error estimate for fullydiscrete_truncated_pf1}
     e_{h,\tau}^{T,n} = \Psi_{\tau}^n-\Psi_{h,\tau}^{T,n} = \left(\Psi_{\tau}^n-I_h \Psi_{\tau}^n\right)+\left(I_h \Psi_{\tau}^n- \Psi_{h,\tau}^{T,n}\right) = : \rho_{h,\tau}^{T,n}+\theta_{h,\tau}^{T,n}.
 \end{equation}
Substituting \eqref{eqn:fullydiscrete_truncated} from the variational formulation of the time-discrete method \eqref{eqn:timediscrete} and using integration by parts, we obtain 
\begin{align}\label{eqn:error estimate for fullydiscrete_truncated_pf2}
			&\epsilon^2\left(\delta_{t\overline t}^2 \theta_{h,\tau}^{T,n}
			, \omega_h\right)
			+\left(\nabla \widetilde{\theta}_{h,\tau}^{T,n}, \nabla \omega_h\right)_h
			+
			\frac{1}{\epsilon^2}\left(\widetilde{\theta}_{h,\tau}^{T,n}, \omega_h\right)+
			\left(V\widetilde{\theta}_{h,\tau}^{T,n}, \omega_h\right)-2\text{i}\Omega \epsilon^2\left(L_z\delta_{\hat{t}}\theta_{h,\tau}^{T,n}, \omega_h\right)_h -\Omega^2\epsilon^2\left(L_z\widetilde{\theta}_{h,\tau}^{T,n}, L_z\omega_h\right)_h
			 \nn \\
	&=-\epsilon^2\left(\delta_{t\overline t}^2 \rho_{h,\tau}^{T,n}
			, \omega_h\right)
			+\sum_{K}\int_{\partial K}\left(\nabla \widetilde{\Psi}_{\tau}^{n} \cdot \mathbf{n}\right) \omega_h \text{d}s
			-
			\frac{1}{\epsilon^2}\left(\widetilde{\rho}_{h,\tau}^{T,n}, \omega_h\right)-
			\left(\widetilde{\rho}_{h,\tau}^{T,n}, \omega_h\right)-\left(\mathcal{N}_{h,\tau}^{T,n},\omega _h \right)\nn \\
            &~~~~+2\text{i}\Omega \epsilon^2\left(L_z\delta_{\hat{t}}\rho_{h,\tau}^{T,n}, \omega_h\right)_h +\Omega^2\epsilon^2\left(L_z\widetilde{\rho}_{h,\tau}^{T,n}, L_z\omega_h\right)_h-\left\langle S^{T,n+1}, \omega_h\right\rangle,
	\qquad \forall\, \omega_h\in V_h^{NC},
	\end{align}
 where  $$\mathcal{N}_{h,\tau}^{T,n}:=\,\lambda\,\left[\frac{\left|\Psi_{\tau}^{ n+1}\right|^2+\left|\Psi_{\tau}^{n-1}\right|^2}{2}\widetilde{\Psi}_{\tau }^{n}-\frac{\mu_A\left(\left|\Psi^{T, n+1}_{h,\tau}\right|^2\right)+\mu_A\left(\left|\Psi^{T, n-1}_{h,\tau}\right|^2\right)}{2}\widetilde{\Psi}_{h,\tau}^{T, n}\right].$$
Taking $\omega_h=\delta_{\hat{t}}\theta_{h,\tau}^{T,n}$ in \eqref{eqn:error estimate for fullydiscrete_truncated_pf2}, and selecting its real part, we have
\begin{align}\label{eqn:error estimate for fullydiscrete_truncated_pf3}
			&\epsilon^2\text{Re}\left(\delta_{t\overline t}^2 \theta_{h,\tau}^{T,n}
			, \delta_{\hat{t}}\theta_{h,\tau}^{T,n}\right)
			+\text{Re}\left(\nabla \widetilde{\theta}_{h,\tau}^{T,n}, \nabla \delta_{\hat{t}}\theta_{h,\tau}^{T,n}\right)_h
			+
			\frac{1}{\epsilon^2}\text{Re}\left(\widetilde{\theta}_{h,\tau}^{T,n}, \delta_{\hat{t}}\theta_{h,\tau}^{T,n}\right) +
			\text{Re}\left(V\widetilde{\theta}_{h,\tau}^{T,n}, \delta_{\hat{t}}\theta_{h,\tau}^{T,n}\right)  \nn \\
            &-\text{Re}\left(2\text{i}\Omega \epsilon^2L_z\delta_{\hat{t}}\theta_{h,\tau}^{T,n}, \delta_{\hat{t}}\theta_{h,\tau}^{T,n}\right)_h-\Omega^2\epsilon^2\text{Re}\left(L_z\widetilde{\theta}_{h,\tau}^{T,n}, L_z\delta_{\hat{t}}\theta_{h,\tau}^{T,n}\right)_h \nn \\
			&
	=-\epsilon^2\text{Re}\left(\delta_{t\overline t}^2 \rho_{h,\tau}^{T,n}
			, \delta_{\hat{t}}\theta_{h,\tau}^{T,n}\right)
			+\text{Re}\left\{\sum_{K}\int_{\partial K}\left(\nabla \widetilde{\Psi}_{\tau}^{n} \cdot \mathbf{n}\right) \delta_{\hat{t}}\theta_{h,\tau}^{T,n} \text{d}s\right\} -
			\frac{1}{\epsilon^2}\text{Re}\left(\widetilde{\rho}_{h,\tau}^{T,n}, \delta_{\hat{t}}\theta_{h,\tau}^{T,n}\right)-
			\text{Re}\left(V\widetilde{\rho}_{h,\tau}^{T,n}, \delta_{\hat{t}}\theta_{h,\tau}^{T,n}\right)\nn \\
			&~~~~-\text{Re}\left(\mathcal{N}_{h,\tau}^{T,n},\delta_{\hat{t}}\theta_{h,\tau}^{T,n} \right)+\text{Re}\left(2\text{i}\Omega \epsilon^2L_z\delta_{\hat{t}}\rho_{h,\tau}^{T,n}, \delta_{\hat{t}}\theta_{h,\tau}^{T,n}\right)_h 
            +\Omega^2\epsilon^2\text{Re}\left(L_z\widetilde{\rho}_{h,\tau}^{T,n}, L_z\delta_{\hat{t}}\theta_{h,\tau}^{T,n}\right)_h 
            -\text{Re}\left\langle S^{T,n+1}, \delta_{\hat{t}}\theta_{h,\tau}^{T,n}\right\rangle. 
	\end{align}
By using Lemma~\ref{lem:relations}, we derive  
\begin{align}\label{eqn:error estimate for fullydiscrete_truncated_pf4}
	&	\epsilon^2\frac{\left\|\delta_t \theta_{h,\tau}^{T,n}\right\|^2_{L^2}-\left\|\delta_t \theta_{h,\tau}^{T,n-1}\right\|^2_{L^2}}{2\tau}
	+\frac{\left\| \nabla \theta_{h,\tau}^{T,n+1}\right\|_{L^2}^2-\left\|\nabla \theta_{h,\tau}^{T,n-1}\right\|_{L^2}^2}{4\tau}
	+\frac{\left\| \theta_{h,\tau}^{T,n+1}\right\|_{L^2}^2-\left\| \theta_{h,\tau}^{T,n-1}\right\|_{L^2}^2}{4\tau \epsilon^2 } \nn \\
	&+\frac{\left(V \theta_{h,\tau}^{T,n+1},\theta_{h,\tau}^{T,n+1}\right)-\left(V \theta_{h,\tau}^{T,n-1},\theta_{h,\tau}^{T,n-1}\right)}{4\tau } -\Omega^2\epsilon^2\frac{\left\|L_z \theta_{h,\tau}^{T,n+1}\right\|_{L^2}^2-\left\|L_z \theta_{h,\tau}^{T,n-1}\right\|_{L^2}^2}{4\tau}\nn\\
    &
	=-\epsilon^2\text{Re}\left(\delta_{t\overline t}^2 \rho_{h,\tau}^{T,n}
			, \delta_{\hat{t}}\theta_{h,\tau}^{T,n}\right)
			+\text{Re}\left\{\sum_{K}\int_{\partial K}\left(\nabla \widetilde{\Psi}_{\tau}^{n} \cdot \mathbf{n}\right) \delta_{\hat{t}}\theta_{h,\tau}^{T,n} \text{d}s\right\} -
			\frac{1}{\epsilon^2}\text{Re}\left(\widetilde{\rho}_{h,\tau}^{T,n}, \delta_{\hat{t}}\theta_{h,\tau}^{T,n}\right)-
			\text{Re}\left(V\widetilde{\rho}_{h,\tau}^{T,n}, \delta_{\hat{t}}\theta_{h,\tau}^{T,n}\right)\nn \\
			&~~~~-\text{Re}\left(\mathcal{N}_{h,\tau}^{T,n},\delta_{\hat{t}}\theta_{h,\tau}^{T,n} \right)+\text{Re}\left(2\text{i}\Omega \epsilon^2L_z\delta_{\hat{t}}\rho_{h,\tau}^{T,n}, \delta_{\hat{t}}\theta_{h,\tau}^{T,n}\right)_h 
            +\Omega^2\epsilon^2\text{Re}\left(L_z\widetilde{\rho}_{h,\tau}^{T,n}, L_z\delta_{\hat{t}}\theta_{h,\tau}^{T,n}\right)_h
            -\text{Re}\left\langle S^{T,n+1}, \delta_{\hat{t}}\theta_{h,\tau}^{T,n}\right\rangle\nn \\
            &~~~~+\text{Re}\left(2\text{i}\Omega \epsilon^2L_z\delta_{\hat{t}}\theta_{h,\tau}^{T,n}, \delta_{\hat{t}}\theta_{h,\tau}^{T,n}\right)_h. 
\end{align}
We first estimate the first term of the right-hand side of \eqref{eqn:error estimate for fullydiscrete_truncated_pf4}. 
By using \eqref{eqn:Ih_error}, \eqref{eqn:timediscrete_err}, \eqref{eqn:reg} and \eqref{eqn:timediscrete_bound}, we have 
\begin{align}\label{eqn:error estimate for fullydiscrete_truncated_pf5}
    -\epsilon^2\left(\delta_{t\overline t}^2 \rho_{h,\tau}^{T,n}
			, \delta_{\hat{t}}\theta_{h,\tau}^{T,n}\right) &\leq \epsilon^2\left\|\delta_{t\overline t}^2 \rho_{h,\tau}^{T,n}\right\|_{L^2}\left\|\delta_{\hat{t}}\theta_{h,\tau}^{T,n}\right\|_{L^2}
            \leq C_{I_h}\epsilon^2h^2\left\|\delta_{t\overline t}^2 \Psi_\tau^n\right\|_{H^2}\left\|\delta_{\hat{t}}\theta_{h,\tau}^{T,n}\right\|_{L^2}\nn\\
        &\leq Ch^2\left\|\delta_{\hat{t}}\theta_{h,\tau}^{T,n}\right\|_{L^2}
        \leq Ch^4+C\left\|\delta_{\hat{t}}\theta_{h,\tau}^{T,n}\right\|_{L^2}^2. 
\end{align}
 For the second term of the right hand side of \eqref{eqn:error estimate for fullydiscrete_truncated_pf4}, using \eqref{eqn:timediscrete_err}, we have
\begin{align}\label{eqn:error estimate for fullydiscrete_truncated_pf6}
    \text{Re}\left(\sum_{K}\int_{\partial K}\left(\nabla \widetilde{\Psi}_{\tau}^{n} \cdot \mathbf{n}\right) \delta_{\hat{t}}\theta_{h,\tau}^{T,n} \text{d}s\right) &=-\text{Re}\left(\sum_{K}\int_{\partial K}\left(\nabla \widetilde{e}_{\tau}^{n} \cdot \mathbf{n}\right) \delta_{\hat{t}}\theta_{h,\tau}^{T,n} \text{d}s\right) +\text{Re}\left(\sum_{K}\int_{\partial K}\left(\nabla \widetilde{\Psi}^{n} \cdot \mathbf{n}\right) \delta_{\hat{t}}\theta_{h,\tau}^{T,n} \text{d}s\right) \nn \\
    &\leq Ch\left\|\widetilde{e}_{\tau}^{n}\right\|_{H^2}\left\|\delta_{\hat{t}}\theta_{h,\tau}^{T,n}\right\|_{1,h}+Ch^2 \left\|\widetilde{\Psi}^{n}\right\|_{H^3}\left\|\delta_{\hat{t}}\theta_{h,\tau}^{T,n}\right\|_{1,h} \nn \\
    &\leq C\left(h\tau^2+h^2\right) \left\|\delta_{\hat{t}}\theta_{h,\tau}^{T,n}\right\|_{1,h}\leq C\left(h^2\tau^4+h^4\right)+C\left\|\delta_{\hat{t}}\theta_{h,\tau}^{T,n}\right\|_{1,h}^2,
\end{align}
where $e_\tau^n:=\Psi^n-\Psi_\tau^n$ for $n\geq 0$. 
In addition, using \eqref{eqn:Ih_error} and \eqref{eqn:timediscrete_bound}, we obtain 
\begin{align}\label{eqn:error estimate for fullydiscrete_truncated_pf7}
    &-\frac{1}{\epsilon^2}\text{Re}\left(\widetilde{\rho}_{h,\tau}^{T,n}, \delta_{\hat{t}}\theta_{h,\tau}^{T,n}\right)-\text{Re}\left(V\widetilde{\rho}_{h,\tau}^{T,n}, \delta_{\hat{t}}\theta_{h,\tau}^{T,n}\right)+\Omega^2\epsilon^2\text{Re}\left(L_z\widetilde{\rho}_{h,\tau}^{T,n}, L_z\delta_{\hat{t}}\theta_{h,\tau}^{T,n}\right)_h \nn \\ 
    &\leq Ch^2\left\|\widetilde{\Psi}_\tau^n\right\|_{H^2}\left\|\delta_{\hat{t}}\theta_{h,\tau}^{T,n}\right\|_{L^2}  +Ch^2\left\|\widetilde{\Psi}_\tau^n\right\|_{H^3}\left\|\delta_{\hat{t}}\theta_{h,\tau}^{T,n}\right\|_{1,h}  \nn\\
    & \leq Ch^2\left(\left\|\Psi_\tau^{n+1}\right\|_{H^2}+\left\|\Psi_\tau^{n-1}\right\|_{H^2}\right)\left\|\delta_{\hat{t}}\theta_{h,\tau}^{T,n}\right\|_{L^2} + Ch^2\left(\left\|\Psi_\tau^{n+1}\right\|_{H^3}+\left\|\Psi_\tau^{n-1}\right\|_{H^3}\right)\left\|\delta_{\hat{t}}\theta_{h,\tau}^{T,n}\right\|_{1,h} \nn \\
    &\leq Ch^4+\left\|\delta_{\hat{t}}\theta_{h,\tau}^{T,n}\right\|_{L^2}^2+\left\|\delta_{\hat{t}}\theta_{h,\tau}^{T,n}\right\|_{1,h}^2.
\end{align}
To estimate the term \(\text{Re}\left(\mathcal{N}_{h,\tau}^{T,n},\delta_{\hat{t}}\theta_{h,\tau}^{T,n} \right)\), similarly to \eqref{eqn:timediscrete_truncate_err_pf6}, we first use the bound
\begin{align}\label{eqn:error estimate for fullydiscrete_truncated_pf8}
   \left\|\mathcal{N}_{h,\tau}^{T,n}\right\| \leq C_\mathcal{N} \left( \left\|e_{h,\tau}^{T,n+1}\right\|_{L^2} + \left\|e_{h,\tau}^{T,n-1}\right\|_{L^2} \right).
\end{align}
Consequently, using \eqref{eqn:Ih_error} and \eqref{eqn:timediscrete_bound}, we obtain
\begin{align}\label{eqn:error estimate for fullydiscrete_truncated_pf9}
\text{Re}\left(\mathcal{N}_{h,\tau}^{T,n},\delta_{\hat{t}}\theta_{h,\tau}^{T,n} \right) &\leq  
C \left( \left\|\rho_{h,\tau}^{T,n+1}\right\|_{L^2} + \left\|\rho_{h,\tau}^{T,n-1}\right\|_{L^2} + \left\|\theta_{h,\tau}^{T,n+1}\right\|_{L^2} + \left\|\theta_{h,\tau}^{T,n-1}\right\|_{L^2} \right) \left\|\delta_{\hat{t}}\theta_{h,\tau}^{T,n}\right\|_{L^2}\leq Ch^4 + \left\|\delta_{\hat{t}}\theta_{h,\tau}^{T,n}\right\|_{L^2}^2.
\end{align}
In view of \eqref{eqn:ritz_error}, \eqref{eqn:Ih_error}, and using \eqref{lem:timediscrete_err} and \eqref{eqn:reg}, we obtain 
\begin{align}\label{eqn:error estimate for fullydiscrete_truncated_pf10}
    &\text{Re}\left(2\text{i}\Omega \epsilon^2\left(L_z\delta_{\hat{t}}\rho_{h,\tau}^{T,n}, \delta_{\hat{t}}\theta_{h,\tau}^{T,n}\right)_h \right)= -2\Omega \epsilon^2\text{Im}\left(L_z\delta_{\hat{t}}\rho_{h,\tau}^{T,n}, \delta_{\hat{t}}\theta_{h,\tau}^{T,n}\right)_h\nn \\
    & = 2\Omega \epsilon^2\text{Im}\left(L_z\delta_{\hat{t}}\left(e_{\tau}^{n}-I_h e_{\tau}^{n}\right), \delta_{\hat{t}}\theta_{h,\tau}^{T,n}\right)_h-2\Omega \epsilon^2\text{Im}\left(L_z\delta_{\hat{t}}\left(\Psi^n-I_h \Psi^n\right), \delta_{\hat{t}}\theta_{h,\tau}^{T,n}\right)_h \nn \\
    & = 2\Omega \epsilon^2\text{Im}\left(L_z\delta_{\hat{t}}\left(e_{\tau}^{n}-I_h e_{\tau}^{n}\right), \delta_{\hat{t}}\theta_{h,\tau}^{T,n}\right)_h-2\Omega \epsilon^2\text{Im}\left(L_z\delta_{\hat{t}}\left(\Psi^n-R_h \Psi^n\right), \delta_{\hat{t}}\theta_{h,\tau}^{T,n}\right)_h 
    -2\Omega \epsilon^2\text{Im}\left(L_z\delta_{\hat{t}}\left(R_h\Psi^n-I_h \Psi^n\right), \delta_{\hat{t}}\theta_{h,\tau}^{T,n}\right)_h
    \nn \\
     &\leq   2\Omega\epsilon^2
     \bigg\|L_z\delta_{\hat{t}}\left(e_{\tau}^{n}-I_h e_{\tau}^{n}\right)\bigg\|_{L^2}\bigg\|\delta_{\hat{t}}\theta_{h,\tau}^{T,n}\bigg\|_{L^2}
     +2\Omega\epsilon^2\bigg\|\delta_{\hat{t}}\left(\Psi^n-R_h \Psi^n\right) \bigg\|_{1,h}\bigg\|\delta_{\hat{t}}\theta_{h,\tau}^{T,n}\bigg\|_{L^2}+2\Omega\epsilon^2\bigg\|\delta_{\hat{t}}\left(R_h\Psi^n-I_h \Psi^n\right)\bigg\|_{1,h}\bigg\|\delta_{\hat{t}}\theta_{h,\tau}^{T,n}\bigg\|_{L^2}\nn \\
     &\leq Ch\bigg\|L_z\delta_{\hat{t}}e_{\tau}^{n}\bigg\|_{H^1}\bigg\|\delta_{\hat{t}}\theta_{h,\tau}^{T,n}\bigg\|_{L^2}+Ch^2 \bigg\|\delta_{\hat{t}}\Psi^{n}\bigg\|_{H^3}\bigg\|\delta_{\hat{t}}\theta_{h,\tau}^{T,n}\bigg\|_{L^2} 
    \nn \\
&\leq C(h\tau^2+h^2)\left\|\delta_{\hat{t}}\theta_{h,\tau}^{T,n}\right\|_{L^2}
     \leq C(h^2\tau^4+h^4)+\left\|\delta_{\hat{t}}\theta_{h,\tau}^{T,n}\right\|_{L^2} ^2,
\end{align}
where we use the estimate: $\left\|R_h\Psi-I_h\Psi\right\|_{H^1} \leq Ch^2\left\|\Psi\right\|_{H^3}$ \cite{lin2006finite}. 
Moreover, using integration by parts, we have  
\begin{align}\label{eqn:error estimate for fullydiscrete_truncated_pf12}
& \text{Re}\left(2\text{i}\Omega \epsilon^2L_z\delta_{\hat{t}}\theta_{h,\tau}^{T,n}, \delta_{\hat{t}}\theta_{h,\tau}^{T,n}\right)_h= 2\Omega \epsilon^2
 \text{Re}\left(\left[x\partial_y-y\partial_x\right]\delta_{\hat{t}}\theta_{h,\tau}^{T,n}, \delta_{\hat{t}}\theta_{h,\tau}^{T,n}\right)_h \nn \\
&~~~=2\Omega \epsilon^2\left[\left(\left[x\partial_y-y\partial_x\right]\delta_{\hat{t}}\theta_{h,\tau}^{T,n}, \delta_{\hat{t}}\theta_{h,\tau}^{T,n}\right)_h+\left(\delta_{\hat{t}}\theta_{h,\tau}^{T,n}, \left[x\partial_y-y\partial_x\right]\delta_{\hat{t}}\theta_{h,\tau}^{T,n}\right)_h\right] =  2\Omega \epsilon^2
 \text{Re}\left\langle\delta_{\hat{t}}\theta_{h,\tau}^{T,n}, \delta_{\hat{t}}\theta_{h,\tau}^{T,n} \right\rangle.
\end{align}
Then by virtue of $\left\langle \cdot, \omega_h \right\rangle = O(h^2)\left\| \omega_h\right\|_{1,h}$, and similar as \eqref{eqn:error estimate for fullydiscrete_truncated_pf6}, we can get
 \begin{align}\label{eqn:error estimate for fullydiscrete_truncated_pf13}
    & \text{Re}\left(2\text{i}\Omega \epsilon^2L_z\delta_{\hat{t}}\theta_{h,\tau}^{T,n}, \delta_{\hat{t}}\theta_{h,\tau}^{T,n}\right)_h-\text{Re}\left\langle S^{T,n+1}, \delta_{\hat{t}}\theta_{h,\tau}^{T,n}\right\rangle 
      = 2\Omega \epsilon^2
 \text{Re}\left\langle\delta_{\hat{t}}\theta_{h,\tau}^{T,n}, \delta_{\hat{t}}\theta_{h,\tau}^{T,n} \right\rangle-\text{Re}\left\langle S^{T,n+1}, \delta_{\hat{t}}\theta_{h,\tau}^{T,n}\right\rangle \nn \\
     & ~~~~\leq C(h\tau^2+h^2) \left\|\delta_{\hat{t}}\theta_{h,\tau}^{T,n}\right\|_{1,h}\leq C(h^2\tau^4+h^4)+\left\|\delta_{\hat{t}}\theta_{h,\tau}^{T,n}\right\|_{1,h} ^2.
 \end{align}
For convenience, we denote 
\begin{align*}
    \mathscr E_{h,\tau}^{T, n}
    := 2\epsilon^2\left\|\delta_t \theta_{h,\tau}^{T,n}\right\|^2_{L^2}
    +\left[\left\| \nabla \theta_{h,\tau}^{T,n+1}\right\|_{L^2}^2+\left\| \nabla \theta_{h,\tau}^{T,n}\right\|_{L^2}^2-\Omega^2\epsilon^2\left(\left\|L_z \theta_{h,\tau}^{T,n+1}\right\|_{L^2}^2 +\left\|L_z \theta_{h,\tau}^{T,n} \right\|_{L^2}^2 \right)\right]
   +\left(\frac{1}{\epsilon^2}+V\right)\left(\left\| \theta_{h,\tau}^{T,n+1}\right\|_{L^2}^2+\left\| \theta_{h,\tau}^{T,n}\right\|_{L^2}^2 \right). 
\end{align*}
Then from
\eqref{eqn:error estimate for fullydiscrete_truncated_pf5}, \eqref{eqn:error estimate for fullydiscrete_truncated_pf6}, \eqref{eqn:error estimate for fullydiscrete_truncated_pf7}, \eqref{eqn:error estimate for fullydiscrete_truncated_pf9}, \eqref{eqn:error estimate for fullydiscrete_truncated_pf10}, \eqref{eqn:error estimate for fullydiscrete_truncated_pf13} and \eqref{eqn:error estimate for fullydiscrete_truncated_pf4}, we have
\begin{align}\label{eqn:error estimate for fullydiscrete_truncated_pf15}
\mathscr E_{h,\tau}^{T, n}\leq \mathscr E_{h,\tau}^{T, n-1}
+C\tau\left(\mathscr E_{h,\tau}^{T, n}+\mathscr E_{h,\tau}^{T, n-1}\right)+C\tau\left(h^2\tau^4+h^4\right). 
\end{align}
By applying the discrete Gronwall inequality to \eqref{eqn:error estimate for fullydiscrete_truncated_pf15}, we obtain  
\begin{align}\label{eqn:error estimate for fullydiscrete_truncated_pf16}  
    \mathscr{E}_{h,\tau}^{T, n} \leq C \left(h^2\tau^4+h^4\right),  
\end{align}  
provided that \(\tau\) is sufficiently small, which further implies that 
\begin{align}\label{eqn:error estimate for fullydiscrete_truncated_pf17}  
    \left\|\delta_t \theta_{h,\tau}^{T,n}\right\|_{L^2}
    +\left\| \theta_{h,\tau}^{T,n+1}\right\|_{1,h}\leq C\left(h\tau^2+h^2\right). 
\end{align}
Finally, combining \eqref{eqn:Ih_error}, \eqref{eqn:timediscrete_bound} and \eqref{eqn:error estimate for fullydiscrete_truncated_pf17}, we obtain 
\begin{align}\label{eqn:error estimate for fullydiscrete_truncated_pf18} 
   \left\|\Psi_{\tau}^{n+1}-\Psi_{h,\tau}^{T,n+1}\right\|_{L^2}\leq C\left(h\tau^2+h^2\right),\quad 
\left\|\Psi_{\tau}^{n+1}-\Psi_{h,\tau}^{T,n+1}\right\|_{1, h} \leq Ch. 
 \end{align}
 Therefore, from \eqref{eqn:error estimate for fullydiscrete_truncated_pf17} and \eqref{eqn:error estimate for fullydiscrete_truncated_pf18}, we complete the proof. 
\end{proof}    

\begin{lemma}\label{lem:fullydiscrete_err}
 Suppose that $\Psi_{\tau}^{n+1}$, $1\leq n\leq N-1$ is the solution of \eqref{eqn:timediscrete}, and denote by $\Psi_{h,\tau}^{n+1}$, $1\leq n\leq N-1$ the solution of \eqref{eqn:fullydiscrete}. Then, the following estimates hold
\begin{align}\label{eqn:fullydiscrete_err}
   \left\|\Psi_{\tau}^m-\Psi_{h,\tau}^{m}\right\|_{L^2}\leq C\left(h\tau^2+h^2\right),\quad 
   \left\|I_h \Psi_{\tau}^m- \Psi_{h,\tau}^{m}\right\|_{1, h}\leq C\left(h\tau^2+h^2\right),
   \quad 
\left\|\Psi_{\tau}^m-\Psi_{h,\tau}^{m}\right\|_{1, h} \leq Ch,  \qquad 0\leq m\leq N, 
 \end{align}
 where $C>0$ is a constant independent of $\tau$ and $h$.
\end{lemma}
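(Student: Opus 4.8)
The plan is to repeat, at the fully discrete level, the device already used to pass from Lemma~\ref{lem:timediscrete_truncate_err} to Lemma~\ref{lem:timediscrete_err}: prove that the truncated full-discrete solution $\Psi_{h,\tau}^{T,n+1}$ is the \emph{unique} solution of \eqref{eqn:fullydiscrete_truncated}, show that it stays in the $L^\infty$-ball on which the cut-off $\mu_A$ is inactive, deduce that it therefore solves the untruncated scheme \eqref{eqn:fullydiscrete}, and then simply read off \eqref{eqn:fullydiscrete_err} from \eqref{eqn:fullydiscrete_err_truncated}.

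First I would establish uniqueness for \eqref{eqn:fullydiscrete_truncated}. Suppose two solutions coincide up to time level $n$; subtract the equations for level $n+1$, test the difference with $\delta_{\hat t}$ of itself, and take the real part. Using Lemma~\ref{lem:relations}, the stabilization identity \eqref{eqn:fullydiscrete_ener_pf2}, and the Lipschitz bound \eqref{eqn:cutoff_pro_b} for $\mu_A$, one arrives at an inequality $\mathscr{E}^{n+1}\le C\tau(\mathscr{E}^{n+1}+\mathscr{E}^{n})$ for the natural discrete energy $\mathscr{E}^{\cdot}$ of the difference, which is nonnegative thanks to condition \eqref{eqn:ep}; for $\tau$ small this forces the difference to vanish, exactly as in \eqref{eqn:timediscrete_err_pf1}. (Existence of $\Psi_{h,\tau}^{T,n+1}$ follows from a Brouwer fixed-point argument of the type carried out in Appendix~B.)

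Next I would bound $\|\Psi_{h,\tau}^{T,m}\|_{L^\infty}$. Splitting
\[
\|\Psi_{h,\tau}^{T,m}\|_{L^\infty}\le\|\Psi_\tau^m\|_{L^\infty}+\|\Psi_\tau^m-I_h\Psi_\tau^m\|_{L^\infty}+\|I_h\Psi_\tau^m-\Psi_{h,\tau}^{T,m}\|_{L^\infty},
\]
the first term is $\le\|\Psi^m\|_{L^\infty}+C\tau^2\le K_0-\tfrac12$ for small $\tau$ by Lemma~\ref{lem:timediscrete_err} and \eqref{eqn:timebound}; the second is $\le Ch$ by standard interpolation estimates and the $H^3$ bound \eqref{eqn:timediscrete_bound}; and the third is controlled by the inverse inequality $\|\omega_h\|_{L^\infty}\le Ch^{-1}\|\omega_h\|_{L^2}$ (valid in 2D) together with the $L^2$ estimate in \eqref{eqn:fullydiscrete_err_truncated} and $\|I_h\Psi_\tau^m-\Psi_\tau^m\|_{L^2}\le Ch^2$, giving $\le Ch^{-1}\cdot C(h\tau^2+h^2)=C(\tau^2+h)$. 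Hence $\|\Psi_{h,\tau}^{T,m}\|_{L^\infty}\le K_0$ once $h,\tau$ are small enough. By Definition~\ref{def1} the cut-off then acts as the identity, $\mu_A(|\Psi_{h,\tau}^{T,m}|^2)=|\Psi_{h,\tau}^{T,m}|^2$, and $\langle S^{T,n+1},\cdot\rangle$ coincides with $\langle S^{n+1},\cdot\rangle$, so \eqref{eqn:fullydiscrete_truncated} reduces to \eqref{eqn:fullydiscrete}. Since \eqref{eqn:fullydiscrete} has a unique solution (same energy computation as above), $\Psi_{h,\tau}^{T,m}=\Psi_{h,\tau}^m$ for all $m$, and \eqref{eqn:fullydiscrete_err} is then precisely \eqref{eqn:fullydiscrete_err_truncated}.

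The main obstacle is the $L^\infty$ step: it genuinely needs the \emph{superclose}, i.e.\ $O(h\tau^2+h^2)$ rather than merely $O(h)$, closeness of $\Psi_{h,\tau}^{T,m}$ to $I_h\Psi_\tau^m$, since the $h^{-1}$ from the inverse inequality would otherwise swamp the plain optimal bound $\|\Psi_\tau^m-\Psi_{h,\tau}^{T,m}\|_{1,h}=O(h)$. This is exactly why Lemma~\ref{lem:fullydiscrete_err_truncated} was set up, via the splitting $e=\rho+\theta$, to deliver the optimal and the high-order estimates simultaneously; once the cut-off has been removed, everything else is bookkeeping.
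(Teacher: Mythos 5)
Your proposal follows essentially the same strategy as the paper's proof: establish unique solvability of the truncated full-discrete scheme \eqref{eqn:fullydiscrete_truncated}, show the truncated solution stays within the $L^\infty$-ball of radius $K_0$ so that $\mu_A$ acts as the identity, conclude that \eqref{eqn:fullydiscrete_truncated} reduces to \eqref{eqn:fullydiscrete} and hence $\Psi_{h,\tau}^{T,m}=\Psi_{h,\tau}^m$, and read off \eqref{eqn:fullydiscrete_err} from \eqref{eqn:fullydiscrete_err_truncated}.

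The one place you genuinely deviate is the $L^\infty$ step, and your version is arguably the sounder of the two. The paper bounds $\|\Psi_\tau^m-\Psi_{h,\tau}^{T,m}\|_{L^\infty}\le C\|\Psi_\tau^m-\Psi_{h,\tau}^{T,m}\|_{H^2}$ and appeals to \eqref{eqn:fullydiscrete_err_truncated}, even though that lemma only provides $L^2$ and broken-$H^1$ control of the error (no broken $H^2$ bound of the finite element error is available), so the embedding step there is left implicit at best. You instead split through the interpolant and use the 2D inverse inequality $\|\omega_h\|_{L^\infty}\le Ch^{-1}\|\omega_h\|_{L^2}$ together with the superclose bound $\|I_h\Psi_\tau^m-\Psi_{h,\tau}^{T,m}\|_{1,h}\le C(h\tau^2+h^2)$, which yields $\|\Psi_{h,\tau}^{T,m}\|_{L^\infty}\le K_0$ for $h,\tau$ small; this is precisely the standard role of the supercloseness estimate and closes the argument without invoking an unavailable $H^2$ error bound. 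Your observation that the plain $O(h)$ estimate would be swamped by the $h^{-1}$ factor, so the high-order estimate of Lemma~\ref{lem:fullydiscrete_err_truncated} is what makes the cut-off removal work, matches the design of the paper's argument. The uniqueness sketch (energy test with $\delta_{\hat t}$ of the difference, cancellation of the stabilization term as in \eqref{eqn:fullydiscrete_ener_pf2}, Lipschitz property of $\mu_A$, positivity via \eqref{eqn:ep}) is consistent with what the paper only states as ``can be demonstrated similarly'' to Lemmas~\ref{lem:timediscrete_truncate_existence} and~\ref{lem:timediscrete_err}. No gaps beyond those already present in the paper's own level of detail.
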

\begin{proof}
The unique existence of \eqref{eqn:fullydiscrete_truncated} can be demonstrated similarly as Lemma \ref{lem:timediscrete_truncate_existence} and Lemma \ref{lem:timediscrete_err}. Furthermore, from the error estimates \eqref{eqn:fullydiscrete_err_truncated}, \eqref{eqn:timediscrete_err} and using \eqref{eqn:timebound}, for sufficiently small \(\tau\), we obtain  
\begin{align}\label{eqn:fullydiscrete_err_pf2}
    \left\|\Psi_{h,\tau}^{T, m}\right\|_{L^\infty} \leq \left\|\Psi^{m}\right\|_{L^\infty}+\left\|\Psi^{m}-\Psi_{\tau}^{m}\right\|_{L^\infty}+
    \left\|\Psi_{\tau}^{m}-\Psi_{h,\tau}^{T, m}\right\|_{L^\infty} \leq \left\|\Psi^{m}\right\|_{L^\infty}+C\left\|\Psi^{m}-\Psi_{\tau}^{m}\right\|_{H^2}+
    C\left\|\Psi_{\tau}^{m}-\Psi_{h,\tau}^{T, m}\right\|_{H^2} \leq K_0,
\end{align}  
for $0\leq m\leq N$. 
By the definition of the truncation function \(\mu_A(\cdot)\), it follows that  
\begin{align}\label{eqn:fullydiscrete_err_pf3}
\mu_A\left(\left|\Psi^{T, n+1}_{h,\tau}\right|^2\right)=\left|\Psi^{T, n+1}_{h,\tau}\right|^2,\qquad 
\mu_A\left(\left|\Psi^{T, n-1}_{h,\tau}\right|^2\right)=\left|\Psi^{T, n-1}_{h,\tau}\right|^2.
\end{align}  
Thus, we can rewrite \eqref{eqn:fullydiscrete_truncated} as  
\begin{align}\label{eqn:fullydiscrete_err_pf4}			&\epsilon^2\left(\delta_{t\overline t}^2\Psi_{h,\tau}^{T,n}
			, \omega_h\right)
			+\left(\nabla \widetilde{\Psi}_{h,\tau}^{T,n}, \nabla \omega_h\right)_h
			+
			\frac{1}{\epsilon^2}\left(\widetilde{\Psi}_{h,\tau}^{T,n}, \omega_h\right)
			+
			\left(V\widetilde{\Psi}_{h,\tau}^n, \omega_h\right)
			+\lambda\,\left(\frac{\left|\Psi^{T,n+1}_{h,\tau}\right|^2				
				+\left|\Psi^{T,n-1}_{h,\tau}\right|^2}{2}\widetilde{\Psi}_{h,\tau}^{T,n}, \omega_h\right)\nn\\
			&-2\text{i}\Omega \epsilon^2\left(L_z\delta_{\hat{t}}\Psi_{h,\tau}^{T,n}, \omega_h\right)_h-
			\Omega^2\epsilon^2\left(L_z\widetilde{\Psi}_{h,\tau}^{T,n}, L_z\omega_h\right)_h
	+\left\langle S^{T,n+1}, \omega_h\right\rangle=0,
	\qquad \forall\, \omega_h\in V_h,
	\end{align}  
which is identical to the full-discrete method \eqref{eqn:fullydiscrete}. Consequently, the unique existence of the truncated solution \(\Psi_{h,\tau}^{T, n+1}\) to the truncated full-discrete method \eqref{eqn:fullydiscrete_truncated} directly implies the unique existence of \(\Psi_{h,\tau}^{n+1}\) for the full-discrete method \eqref{eqn:fullydiscrete}. Overall, we can conclude that 
\begin{align}\label{eqn:fullydiscrete_err_pf5}
    \Psi_{h,\tau}^{T, m} =\Psi_{h,\tau}^{m}\qquad \text{for all}~~m\geq 0. 
\end{align}
Therefore, \eqref{eqn:fullydiscrete_err} can be directly obtained from \eqref{eqn:fullydiscrete_err_truncated}. We have completed the proof.  
\end{proof}

Next, we demonstrate the results shown in Theorem \ref{thm:main}. The $L^\infty$-boundedness of $\Psi_{h, \tau}^{T, m}$ in \eqref{eqn:fullydiscrete_err_pf2} and $\Psi_{h,\tau}^{T, m} =\Psi_{h,\tau}^{m}$ in \eqref{eqn:fullydiscrete_err_pf5} immediately yield \eqref{eqn:L_infty}. In addition,
using \eqref{eqn:timediscrete_err} and \eqref{eqn:fullydiscrete_err}, we obtain the optimal $L^2$- norm error estimate
\begin{align}\label{eqn:errr1}
   \left\|\Psi^m-\Psi_{h,\tau}^{m} \right\|_{L^2} 
   \leq  \left\|\Psi^m-\Psi_{\tau}^{m} \right\|_{L^2}+\left\|\Psi_{\tau}^{m} -\Psi_{h,\tau}^{m} \right\|_{L^2} \leq C\tau^2 + C(h\tau^2+h^2) \leq C (h^2+\tau^2),
\end{align}
and the optimal $H^1$-norm error estimate
\begin{align}\label{eqn:errr2}
   \left\|\Psi^m-\Psi_{h,\tau}^{m} \right\|_{1,h} 
   &\leq  \left\|\Psi^m-\Psi_{\tau}^{m} \right\|_{H^1}+\left\|\Psi_{\tau}^{m} -\Psi_{h,\tau}^{m} \right\|_{1,h} \leq  C (h+\tau^2).
\end{align}
The estimates \eqref{eqn:errr1} and \eqref{eqn:errr2} implies \eqref{eqn:convergenceL2} and \eqref{eqn:convergenceH1}, respectively. 
In addition, from \eqref{eqn:timediscrete_truncate_err} and \eqref{eqn:fullydiscrete_err}, 
we have 
\begin{align}\label{eqn:errr3}
    \left\|I_h\Psi^m-\Psi_{h,\tau}^m\right\|_{1,h}&\leq 
    \left\|I_h\Psi^m-I_h\Psi_\tau^m\right\|_{1,h}
    + \left\|I_h\Psi_\tau^m-\Psi_{h,\tau}^m\right\|_{1,h}\nn\\
    &\leq C\left\|\Psi^m-\Psi_\tau^m\right\|_{H^1}
    + \left\|I_h\Psi_\tau^m-\Psi_{h,\tau}^m\right\|_{1,h}\nn\\
    &\leq C\tau^2+C(h\tau^2+h^2)\leq C(h^2+\tau^2),
\end{align}
which demonstrates \eqref{eqn:supercloseH1}. Finally, based on interpolated postprocessing technique, we obtain 
\begin{align}\label{eqn:errr4}
\left\|\Psi^m-I_{2h}\Psi_h^{m}\right\|_{1,h}\leq C_E(h^2+\tau^2),
\end{align}
where $I_{2h}$ is the interpolated postprocessing operator \cite{lin2006finite}. Therefore, we have completed the proof of Theorem \ref{thm:main}.  
	
\begin{rem}
In our convergence analysis, we do not provide a detailed discussion of the multiscale parameter $\epsilon$ in order to simplify the theoretical derivation. 
However, $\epsilon$ plays a crucial role in the convergence analysis. 
Due to the presence of the term $-\Omega^2 \epsilon^2 L_z^2 \Psi$ in the RKG equation, this term must be controlled by $-\Delta \Psi$, 
which requires assuming that $\epsilon$ is bounded above by a certain constant (see \eqref{eqn:ep}). 
On the other hand, when estimating the term $\mathcal L_\tau^{T, n}$ in \eqref{eqn:timediscrete_truncate_err_pf2}, 
the resulting truncation error is actually of order~$\mathcal O(\tau^2/\epsilon^2)$. 
Therefore, if the small parameter~$\epsilon$ is incorporated into the error analysis, 
the convergence rates are expected to satisfy
\begin{align*}
	\sup_{0\leq n\leq N}\left\|\Psi^n-\Psi_{h,\tau}^n\right\|_{L^2}
	\leq C\left(h^2+\frac{\tau^2}{\epsilon^2}\right), \quad 
	\sup_{0\leq n\leq N}\left\|\Psi^n-\Psi_{h,\tau}^n\right\|_{1,h}
	\leq C\left(h+\frac{\tau^2}{\epsilon^2}\right), \quad
	\sup_{0\leq n\leq N}\left\|\Psi^n-I_{2h}\Psi_{h,\tau}^n\right\|_{1,h}
	\leq C\left(h^2+\frac{\tau^2}{\epsilon^2}\right),
\end{align*}
where the constant~$C$ is independent of~$h$, $\tau$, and $\epsilon$.
Since this derivation is rather involved, we will address it in detail in our future work, 
where the effect of~$\epsilon$ on the numerical error and convergence will be systematically investigated.
\end{rem}

\section{Numerical results}\label{sec5}
In this section, we first verify the accuracy of the conforming and nonconforming FEMs as stated in Theorem~\ref{thm:main}. Next, we examine the discrete energy and charge conservation properties of both schemes. Finally, we employ the proposed numerical methods to simulate vortex dynamics governed by the RKG equation~\eqref{eq:model2}, covering both relativistic and nonrelativistic regimes.

\subsection{Accuracy and structure-preservation tests}
\begin{example} [\textbf{Accuracy test: smooth solution}]
    In this example, we test the accuracy of the conforming and nonconforming FEMs defined in Definition~\ref{Def:fully}. To this end, we add a source term at the right-hand side of the
model \eqref{eq:model2} with the exact solution $$\Psi(x, y, t)=(t+1)^3\sin\pi x\sin\pi y.$$ 
In the tests of this example, we set
\begin{align*}
    U=[-1, 1]\times [-1, 1],~~T = 1, ~~\lambda=1,~~\Omega=0.8,~~\epsilon = 0.01,~~V=\frac{x^2+y^2}{2}e^{-(x^2+y^2)}.
\end{align*}
Based on the above settings, we perform numerical experiments to verify the error estimates presented in Theorem~\ref{thm:main}. 
The results in Figure~\ref{figure:2} clearly confirm the theoretical predictions: the $H^1$-norm converges at first order, while both the $L^2$-norm and the higher-order $H^1$-norm achieve second-order accuracy for the conforming and nonconforming FEMs.
Since a source term is introduced in this example, it does not represent the model considered in this paper. The aim here is solely to examine the convergence of the numerical method, and accordingly the solution does not preserve energy and charge conservation.
\begin{figure}
    \centering \includegraphics[width=0.47\linewidth]{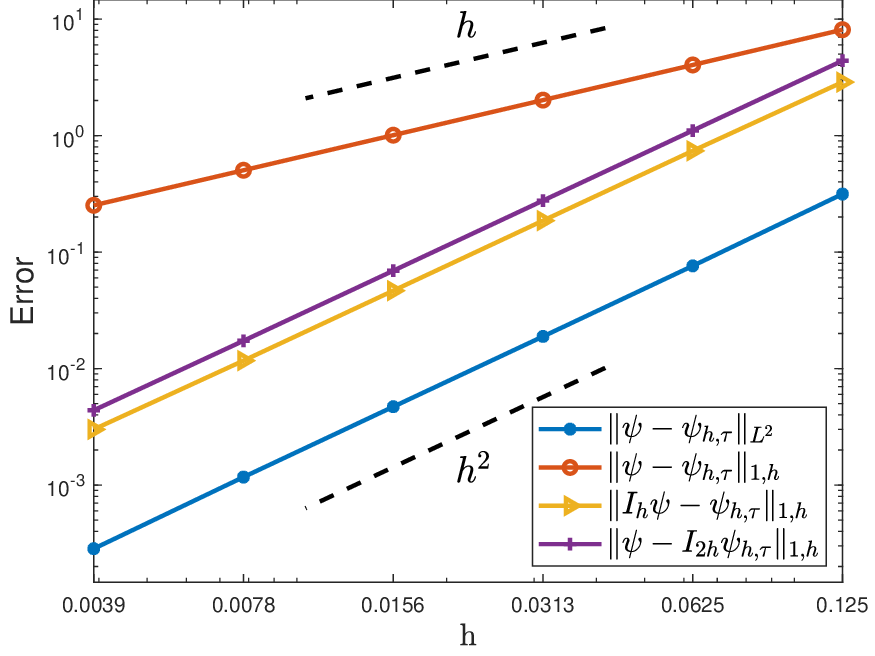} \includegraphics[width=0.47\linewidth]{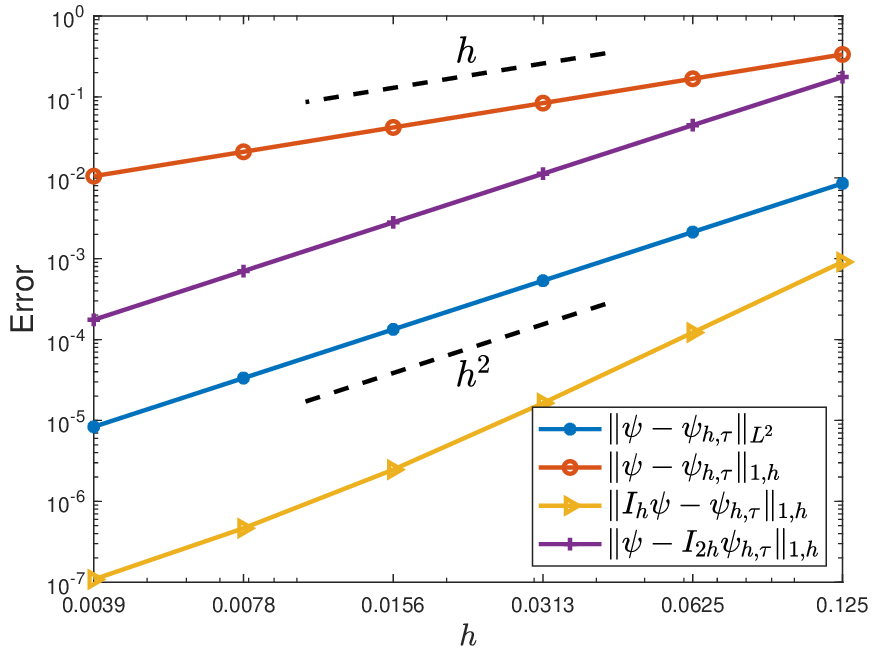}
    \caption{Error estimates and convergence rates obtained with the conforming (left) and nonconforming (right) FEMs (Example 5.1).}
    \label{figure:2}
\end{figure}
\end{example}

\begin{example} [\textbf{Accuracy test: non-smooth solution}]
In this example, we compare the convergence of the conforming and nonconforming FEMs for the RKG equation with a source term, under a non-smooth solution. The exact solution in this example is chosen as 
\begin{align*}
    u(x,y,t) = (t+1)^3 \bigg(1-|2x-1|^2\bigg)^s \bigg(1-|2y-1|^2\bigg)^s
,\qquad s\in\left[1/2,1\right].
\end{align*}
In the tests of this example, we set
\begin{align*}
    U=[0, 1]\times [0, 1],~~T = 0.1, ~~\lambda=1,~~\Omega=0.8,~~\epsilon = 0.001,~~V=\frac{x^2+y^2}{2}e^{-(x^2+y^2)}.
\end{align*}
Figure~\ref{figure:3} presents the error plots for different values of $s$. When the solution is non-smooth, the nonconforming FEM shows superior convergence. For sufficiently smooth solutions, the nonconforming FEM still attains slightly higher accuracy, but its performance is nearly indistinguishable from that of the conforming FEM.
\begin{figure}
    \centering \includegraphics[width=0.33\linewidth]{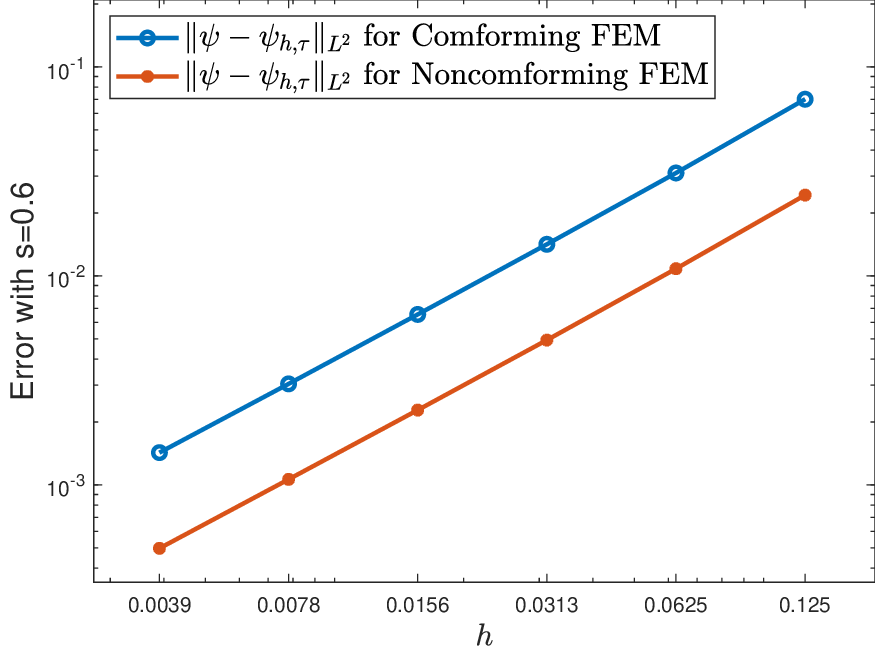} \includegraphics[width=0.33\linewidth]{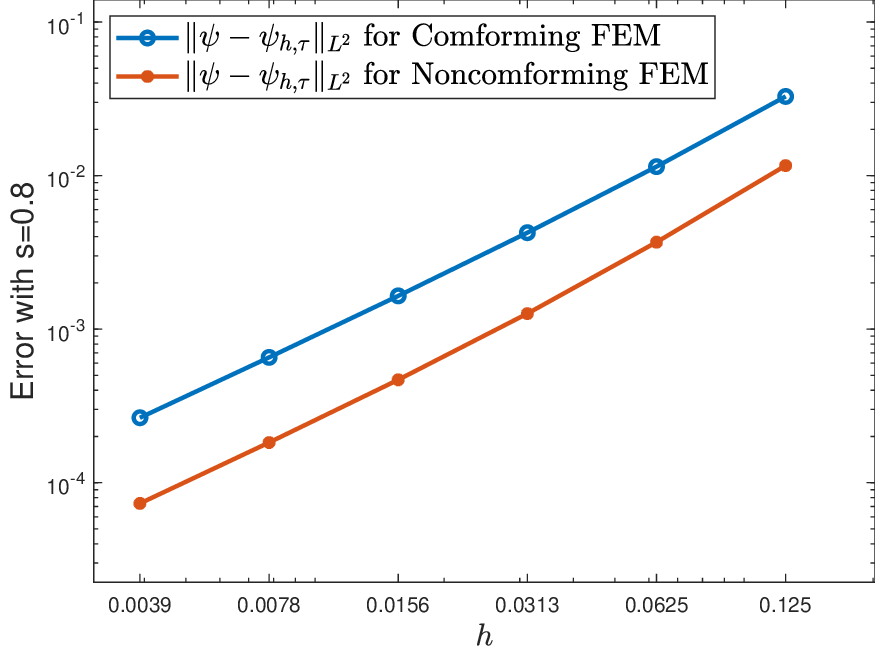} \includegraphics[width=0.33\linewidth]{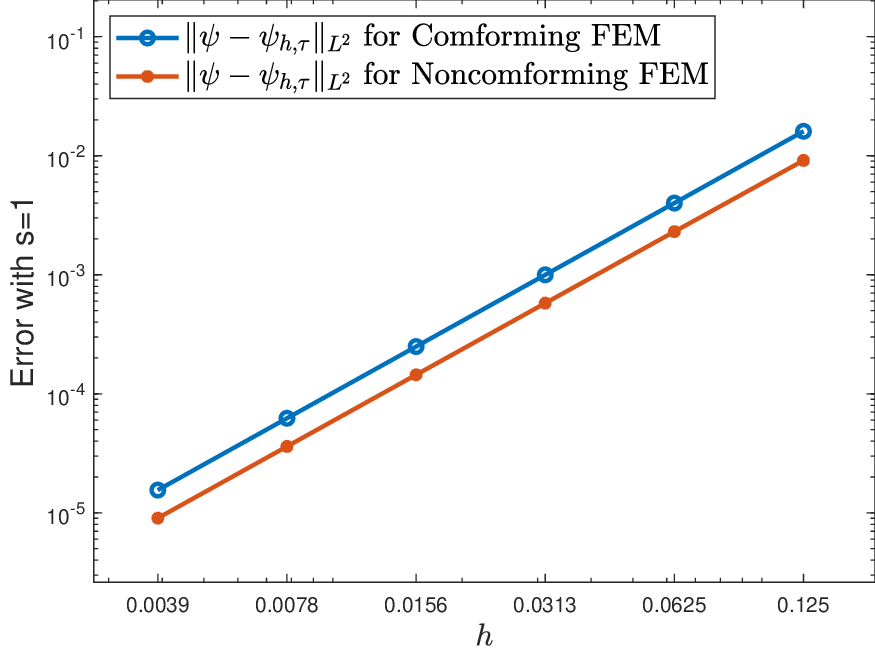}
    \caption{Error estimates and convergence rates obtained with the conforming (left) and nonconforming (right) FEMs (Example 5.2).}
    \label{figure:3}
\end{figure}
\end{example}

\begin{example} [\textbf{Structure-preservation tests}]
In this example, we investigate the structure-preserving properties of the conforming and nonconforming FEMs, as stated in Theorem~\ref{thm-fullydiscrete-conservation}, focusing on discrete energy and charge conservation. The initial data are chosen as
\begin{align*}
    \psi_0(x,y) &= \frac{x+\textup{i}y}{x^2+y^2+1}e^{-\frac{2x^2+1.5y^2}{2}}\,, \qquad 
    \psi_1(x,y) =\textup{i}\omega\psi_0 \,.
\end{align*}
The computational domain and parameters are specified as
\[
    U = [-8, 8]\times[-8, 8], \quad 
    T = 10, \quad 
    \lambda = 0.5, \quad 
    \Omega = 0.8,\quad \omega=1e-3, \quad 
    V = \tfrac{x^2+y^2}{2} e^{-(x^2+y^2)},
\]
and different values of $\epsilon = 10^{-i}$, with $i = 3,4,5$, are tested.  
The relative errors of the discrete energy and charge are defined as
\begin{align*}
  \text{Energy error}  = \frac{\big| E_h(t) - E_h(0) \big|}{E_h(0)},  \qquad \text{Charge error} = \frac{\big| Q_h(t) - Q_h(0) \big|}{Q_h(0)},
\end{align*}
where $E_h(t)$ and $Q_h(t)$ denote the discrete energy and charge at time $t$, such that 
\begin{align*}
    E_h(t_n):=E_h^n,\qquad Q_h(t_n):=Q_h^n,\qquad 1\leq n\leq N. 
\end{align*}
 The numerical results, presented in Figures~\ref{figure:4}--\ref{figure:7}, are in full agreement with the theoretical predictions of Theorem~\ref{thm-fullydiscrete-conservation}. In particular, for the nonconforming FEM, the numerical experiments indicate that including the conservation-adjusting term $\langle S^{n+1}, \omega_h \rangle$ is crucial for maintaining energy and charge conservation. 
\end{example}

\begin{figure}
    \centering \includegraphics[width=0.47\linewidth]{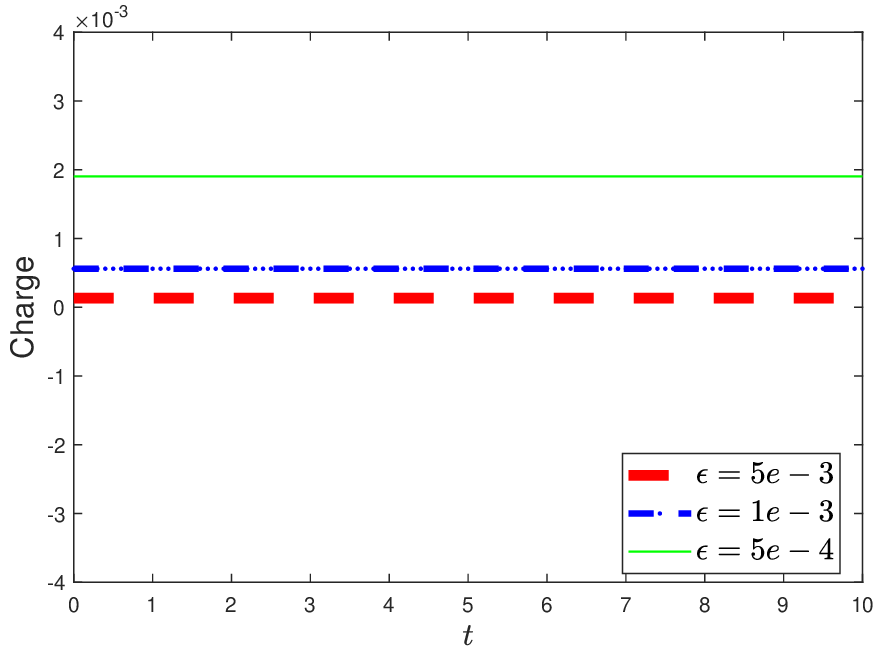} \includegraphics[width=0.47\linewidth]{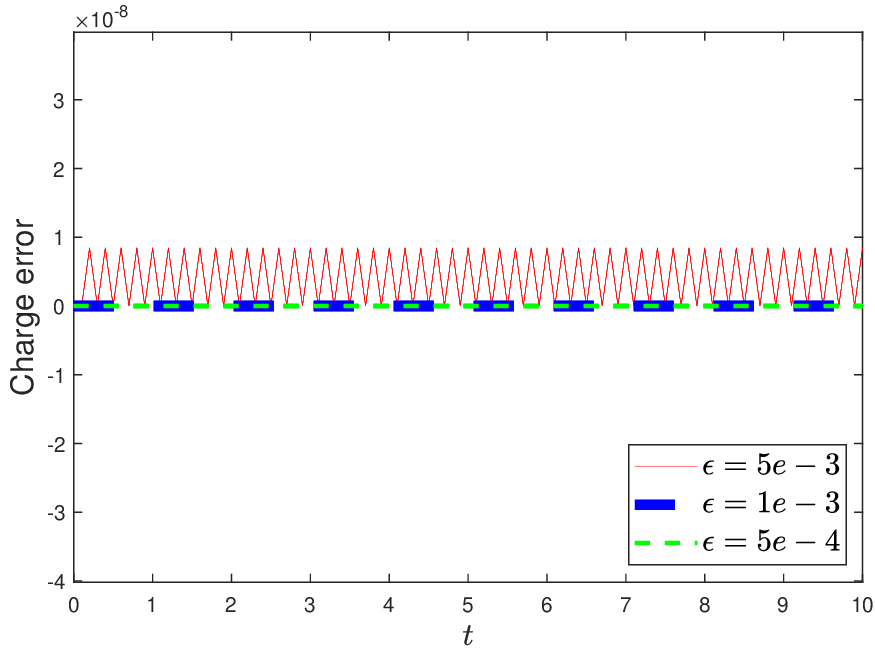}
    \caption{The discrete charge and its relative error for the conforming FEM (Example 5.3).}
    \label{figure:4}
\end{figure}
\begin{figure}
    \centering \includegraphics[width=0.47\linewidth]{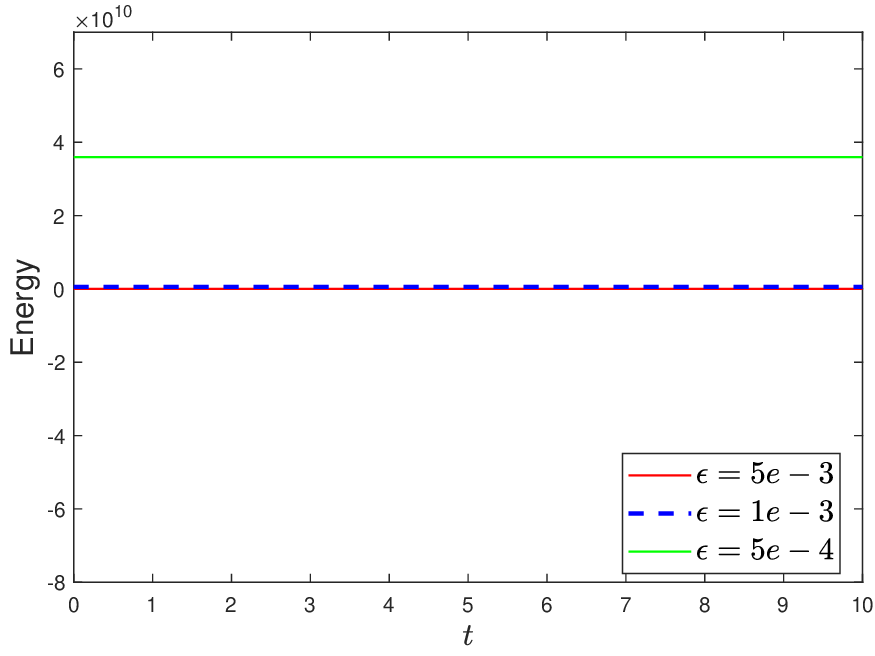} \includegraphics[width=0.47\linewidth]{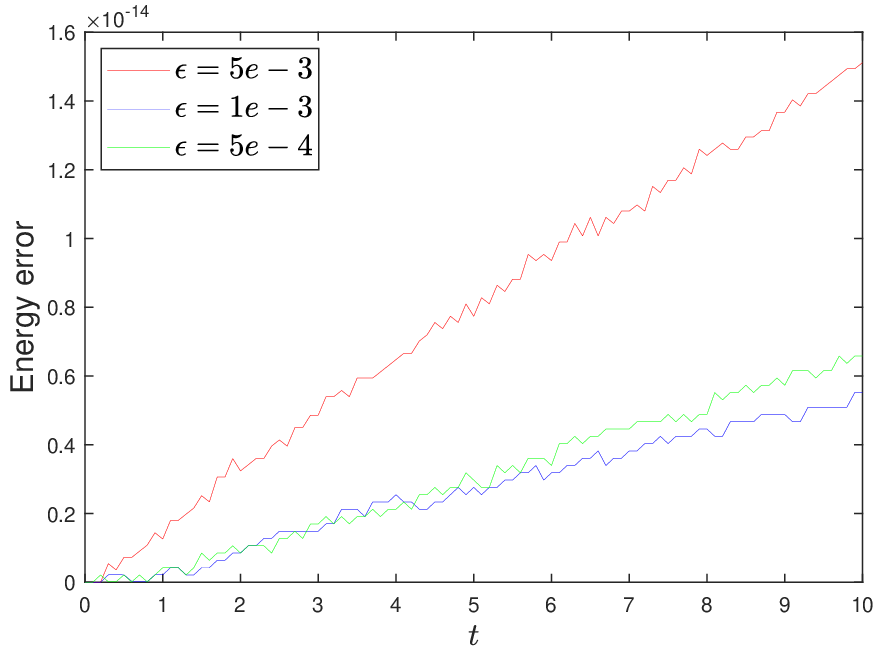}
    \caption{The discrete energy and its relative error for the conforming FEM (Example 5.3).}
    \label{figure:5}
\end{figure}
\begin{figure}
    \centering \includegraphics[width=0.47\linewidth]{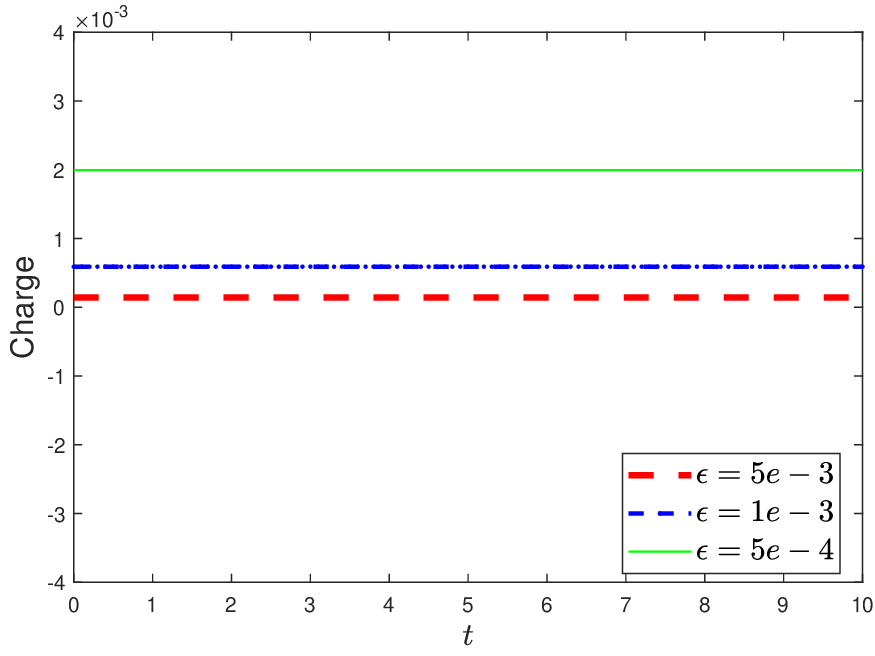} \includegraphics[width=0.47\linewidth]{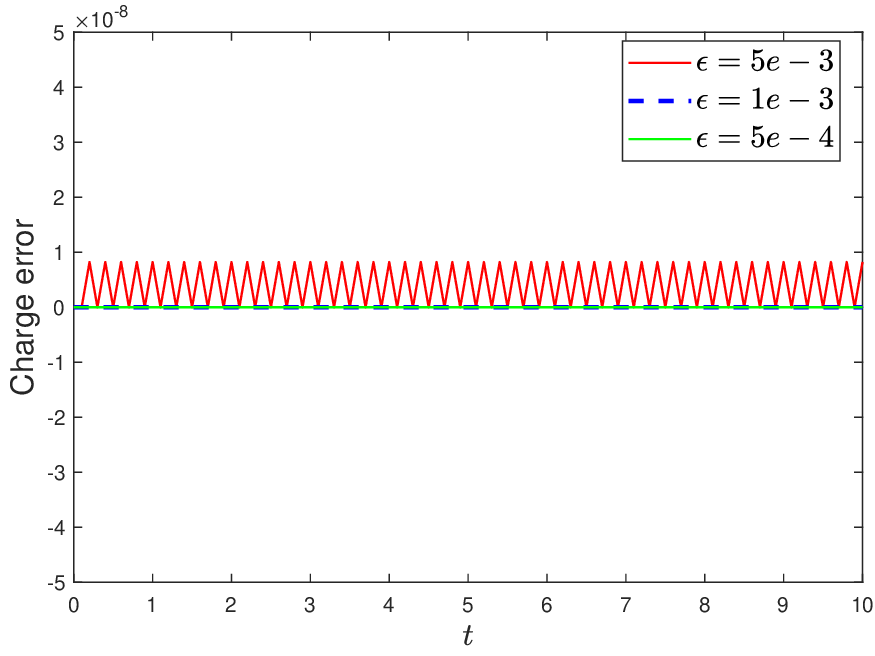}
    \caption{The discrete charge and its relative error for the nonconforming FEM (Example 5.3).}
    \label{figure:6}
\end{figure}
\begin{figure}
    \centering \includegraphics[width=0.47\linewidth]{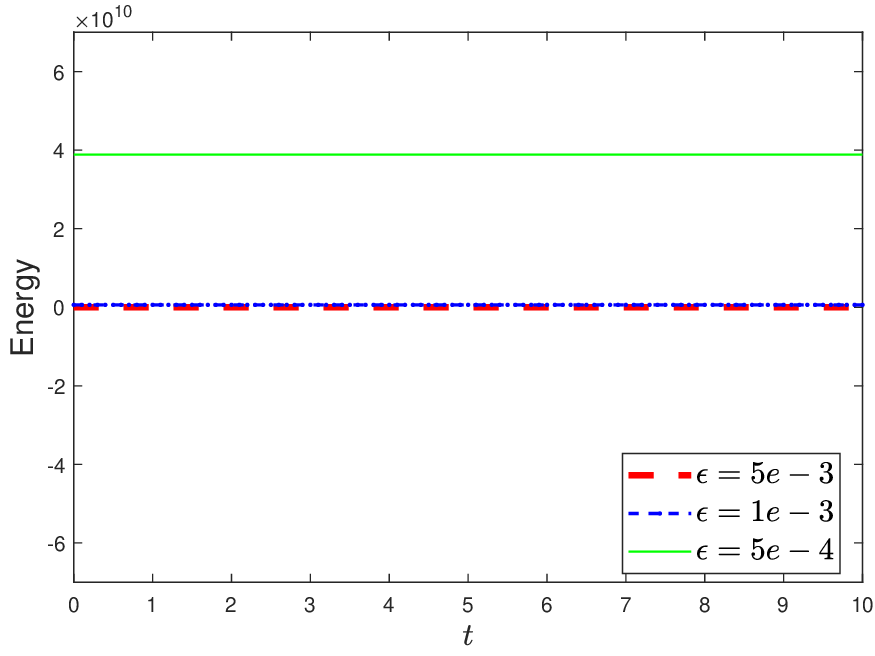} \includegraphics[width=0.47\linewidth]{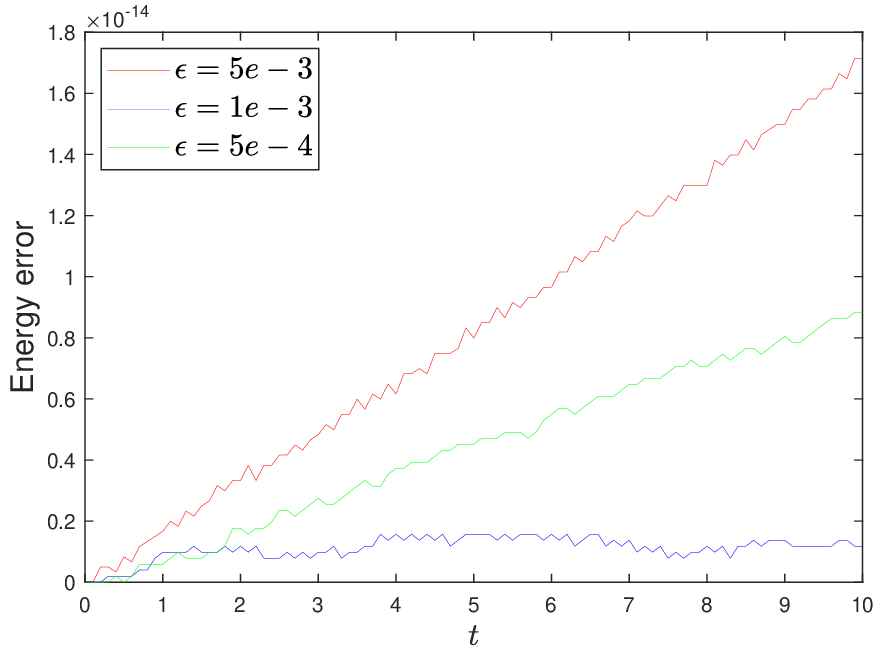}
    \caption{The discrete charge and its relative error for the nonconforming FEM (Example 5.3).}
    \label{figure:7}
\end{figure}
\subsection{Simulations of dynamics}
We apply the proposed numerical method to simulate the vortex dynamics in the RKG equation, covering the regime from the relativistic to the nonrelativistic case. For convenience, in the following examples, we only adopt the conforming FEM. 

\begin{example}[\textbf{Generation of vortices}]
In this example, we investigate vortex generation in the relativistic regime, 
i.e., $\epsilon = 1$. The initial data are chosen as
\begin{equation*}
\psi_0(x,y) = e^{-2x^2 - 1.5y^2 + \textup{i}N_0 \arctan\,\left(\tfrac{y}{x}\right)}, 
\quad 
\psi_1(x,y) = \textup{i} \psi_0(x,y),
\end{equation*}
with $N_0 = 2, 3$. In this test, we set 
\begin{equation*}
U = [-5,5]\times[-5,5], 
\quad T = 7, 
\quad \lambda = 2, 
\quad \Omega = 1, 
\quad V = x^2 + y^2.
\end{equation*}
The dynamical evolution is illustrated in Figures \ref{figure:8}-\ref{figure:9}. We observe that no vortices are present initially; however, as time progresses, vortices emerge, and their number coincides with $N_0$.
\end{example}
\begin{figure}
    \centering
    \includegraphics[width=1\linewidth]{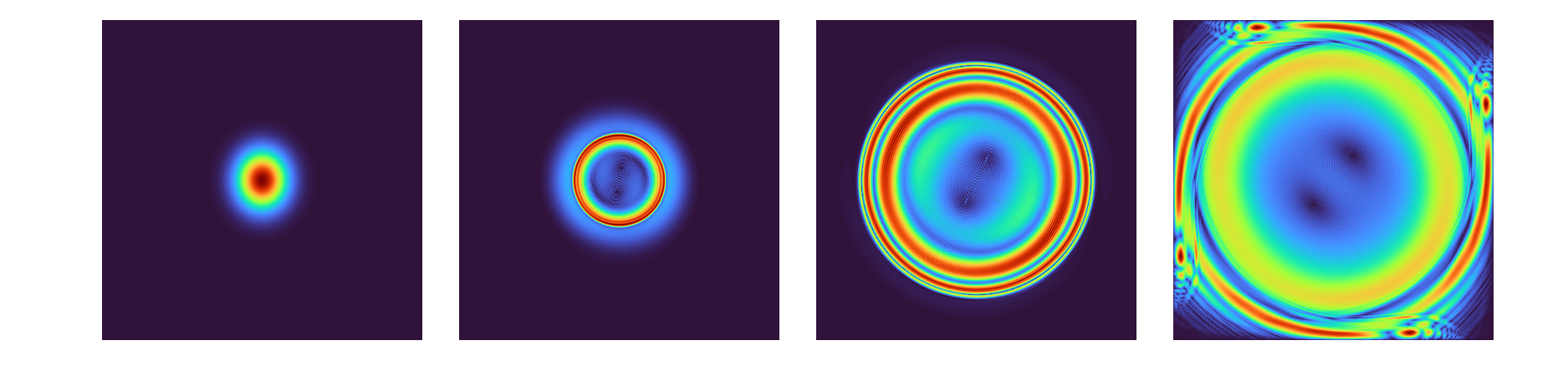}
    \includegraphics[width=1\linewidth]{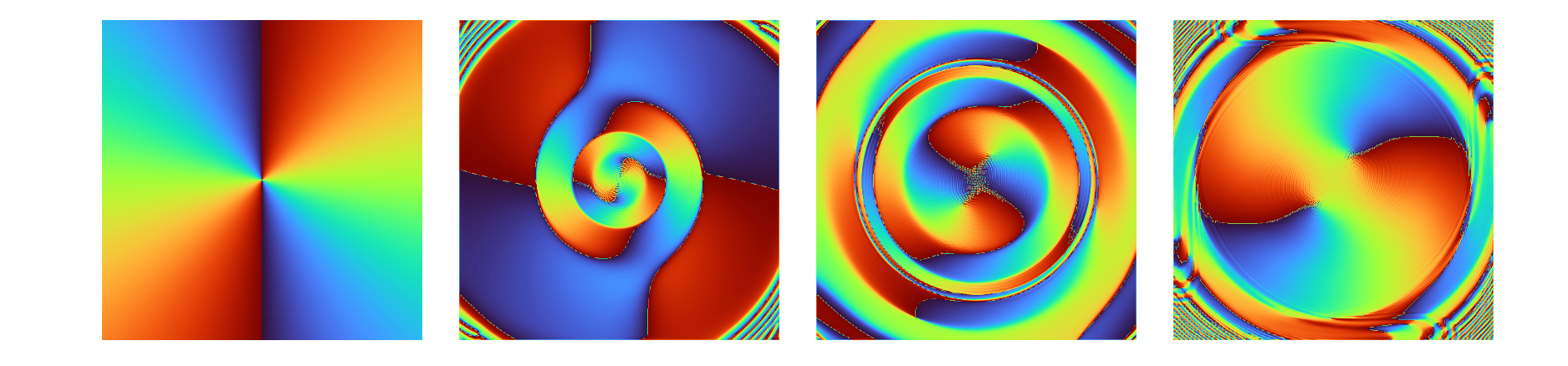}
    \caption{Contour plot of $|\Psi_{h, \tau}^n|$ and $\text{Arg}(\Psi_{h, \tau}^n)$ at $t = 0, 1.5, 2.9,  5.9$ with $N_0 = 2$ (Example 5.4).}
    \label{figure:8}
\end{figure}
\begin{figure}
    \centering
    \includegraphics[width=1\linewidth]{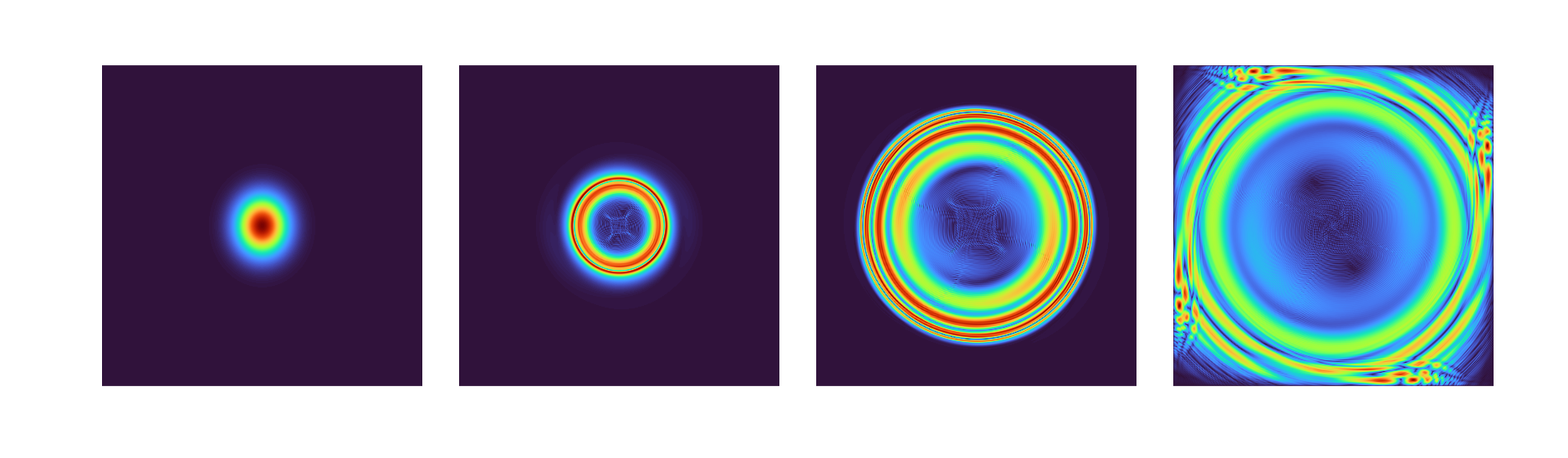}
    \includegraphics[width=1\linewidth]{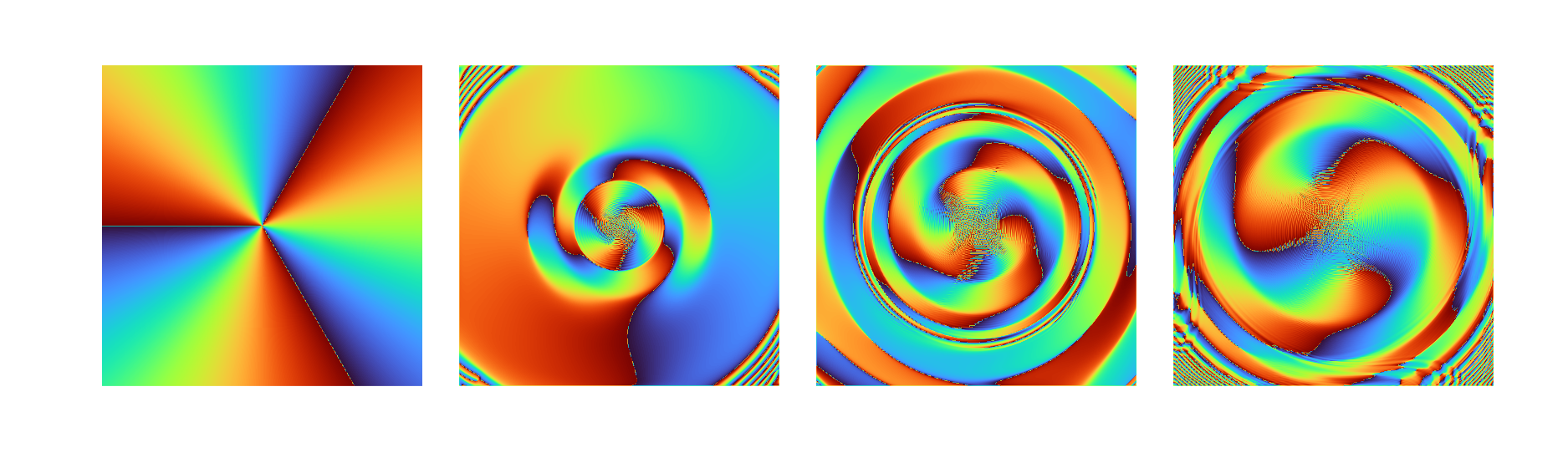}
    \caption{Contour plot of $|\Psi_{h, \tau}^n|$ and $\text{Arg}(\Psi_{h, \tau}^n)$ at $t = 0, 1.7, 3.5,  6.1$ with $N_0 = 3$ (Example 5.4).}
    \label{figure:9}
\end{figure}
\begin{example}[\textbf{Relativistic effect on the bound state}]
To obtain the bound state solutions, we employ the normalized GFLM presented in Appendix A and select the initial data as:
 \[
    z_+^{0} = \frac{ (1-\Omega) \, \phi_g(x,y) + \Omega \, \phi_g(x,y) (x - \textup{i} y) }
    {\left\| (1-\Omega) \, \phi_g(x,y) + \Omega \, \phi_g(x,y) (x - \textup{i} y) \right\|_{L^2}},
    \quad
    z_-^{0} = \overline{z_+^{0}},
\]
with
\[
    \phi_g(x,y) = \frac{1}{\sqrt{\pi}} e^{-(x^2 + y^2)/2}.
\]
The parameters and the external potential are chosen as
\[
 \lambda=50, \qquad\Omega=0.9, \qquad \textup{tol}=1e-8,\qquad V=\frac{x^2 + y^2}{2}.
\]
Figure \ref{figure:10} presents the ground state solutions $z_+$ and $z_-$, showing that during the evolution both components are approximately evenly distributed across two regions. To examine the influence of relativistic effects on vortex structures in the ground state of the coupled RNLS equations, we first compute $z_+$ and $z_-$ using the normalized GFLM described in Appendix~A. The initial data for the RKG equation are then taken as
\begin{align*}
    \psi_0 = z_+ + \overline{z_-}, \qquad \psi_1 = \textup{i}\left(z_+ - \overline{z_-}\right).
\end{align*}
\begin{figure}
    \centering
    \includegraphics[width=1\linewidth]{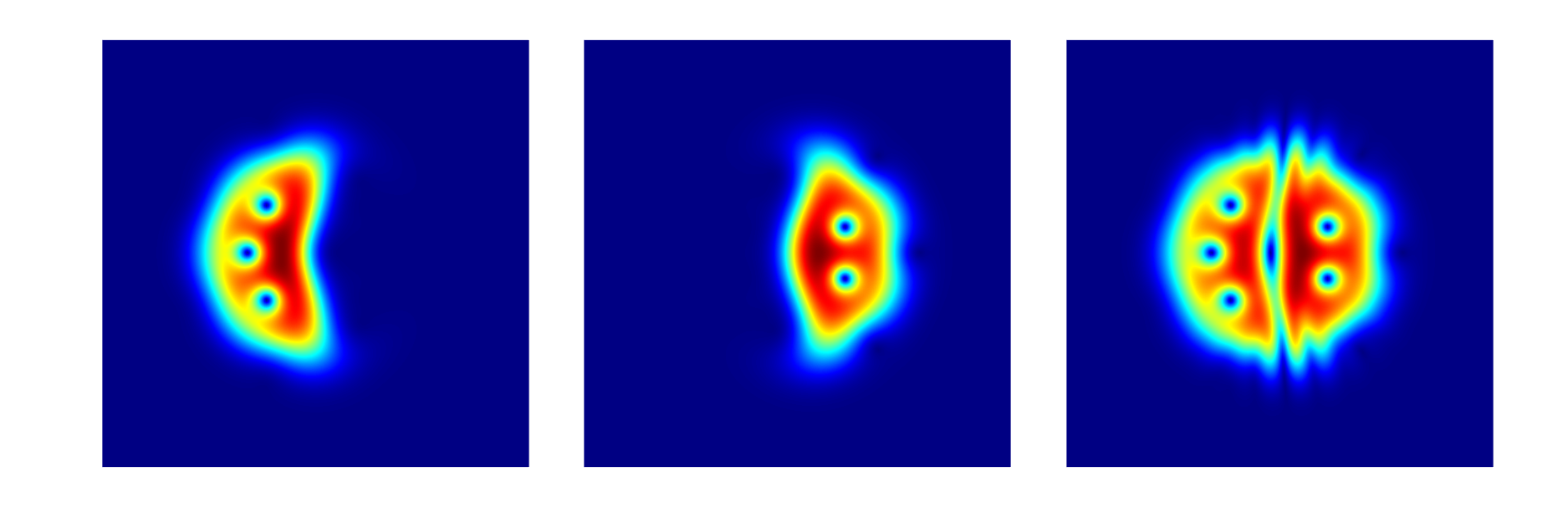}
   \includegraphics[width=1\linewidth]{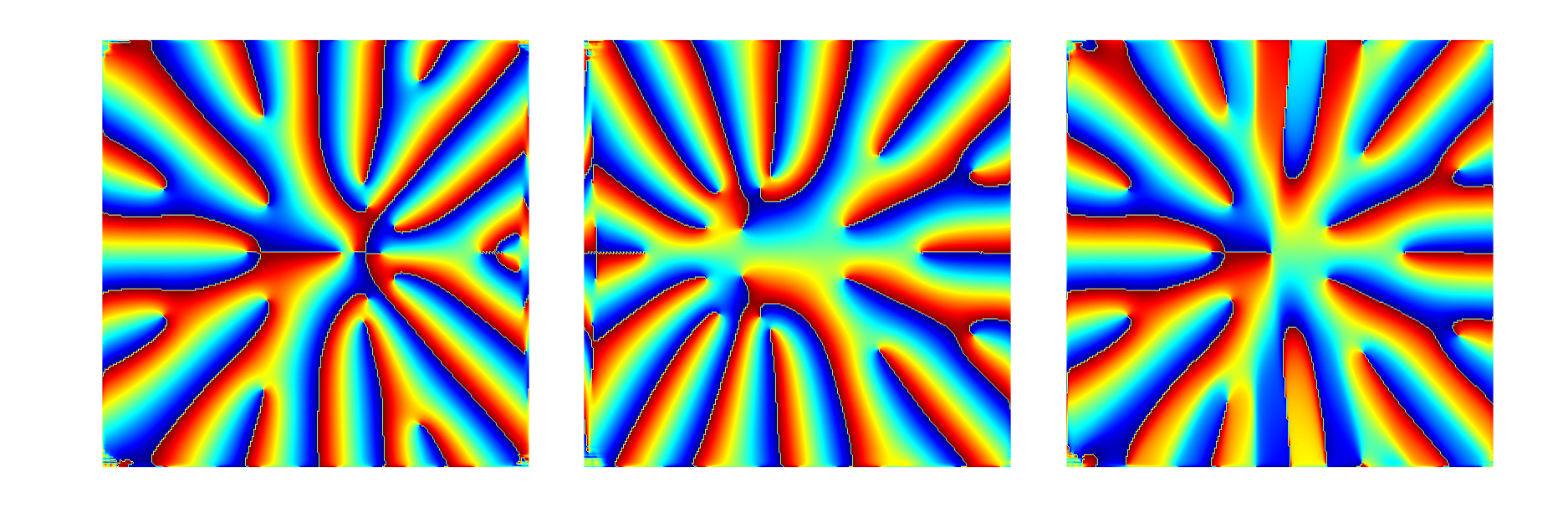}

    \caption{Contour plots of the bound state of the particle $z_{+}$ (top left), 
    	the antiparticle $z_{-}$ (top middle), and the quantization $z_{+} + \overline{z_{-}}$ (top right) (Example~5.5). 
    	The figures below show the corresponding phase (argument) distributions. }
    \label{figure:10}
\end{figure}

The RKG equation is subsequently solved using a conforming FEM for various values of $\epsilon$. Figures \ref{figure:11}-\ref{figure:12} show the numerical solution $\Psi_{h, \tau}^n$ for $\epsilon = 1$ and $\epsilon = 1/8$, respectively, while Figure \ref{figure:13} corresponds to $\epsilon = 1/32$. The results indicate that for $\epsilon = 1$, relativistic effects substantially modify the vortex dynamics during the time evolution. As $\epsilon$ decreases, these effects gradually diminish, and for $\epsilon = 1/32$, relativistic contributions become negligible, yielding solutions that closely match the ground states of the corresponding coupled RNLS equations.

\end{example}
\begin{figure}
    \centering
    \includegraphics[width=1\linewidth]{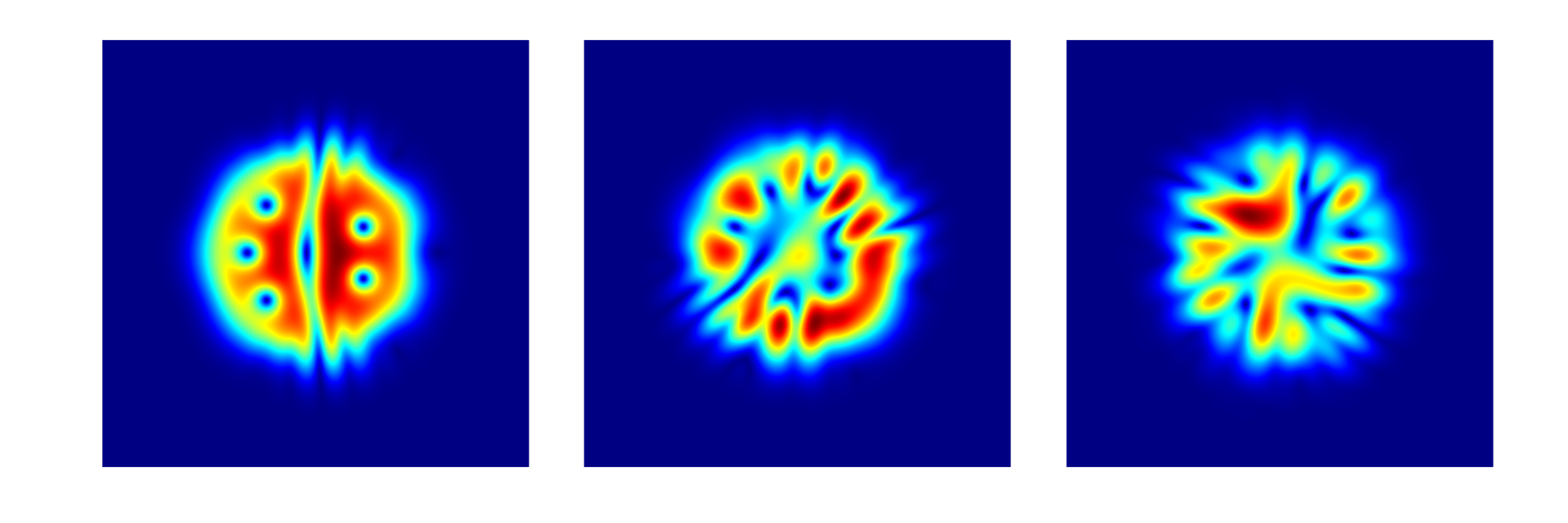}
    \includegraphics[width=1\linewidth]{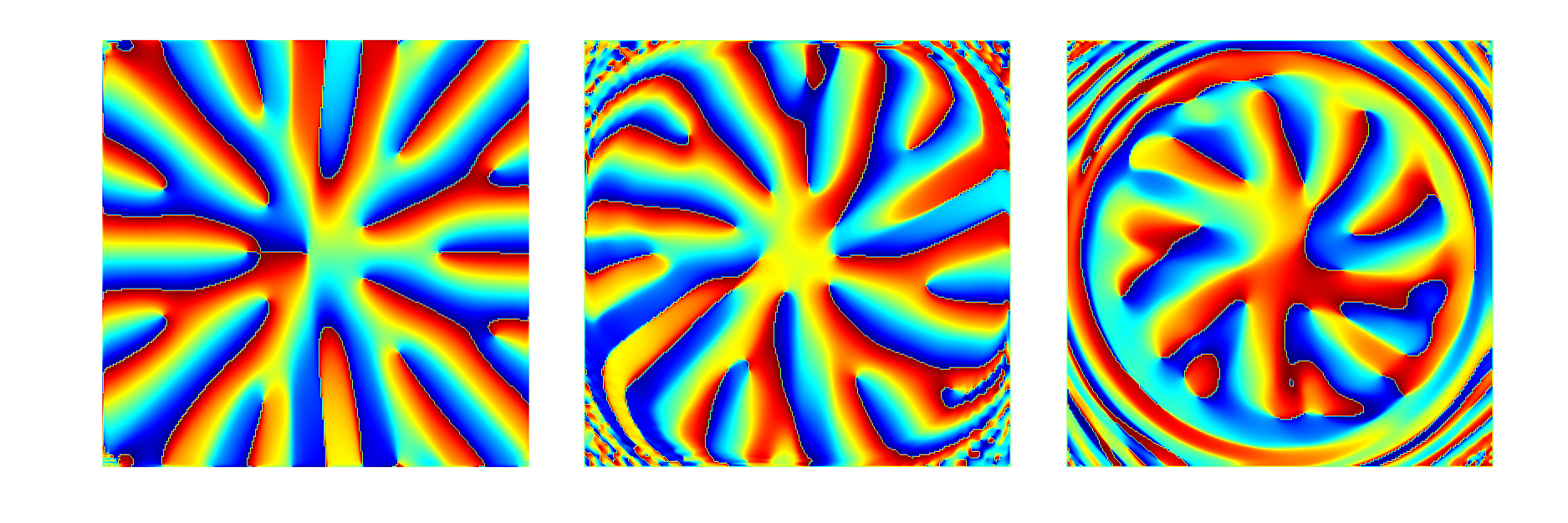}
    \caption{Contour plot of $|\Psi_{h, \tau}^n|$ and Arg$(\Psi_{h, \tau}^n)$ under $\epsilon=1$, at $t=0, 3, 6$ (Example 5.5).}
    \label{figure:11}
\end{figure}
\begin{figure}
    \centering
    \includegraphics[width=1\linewidth]{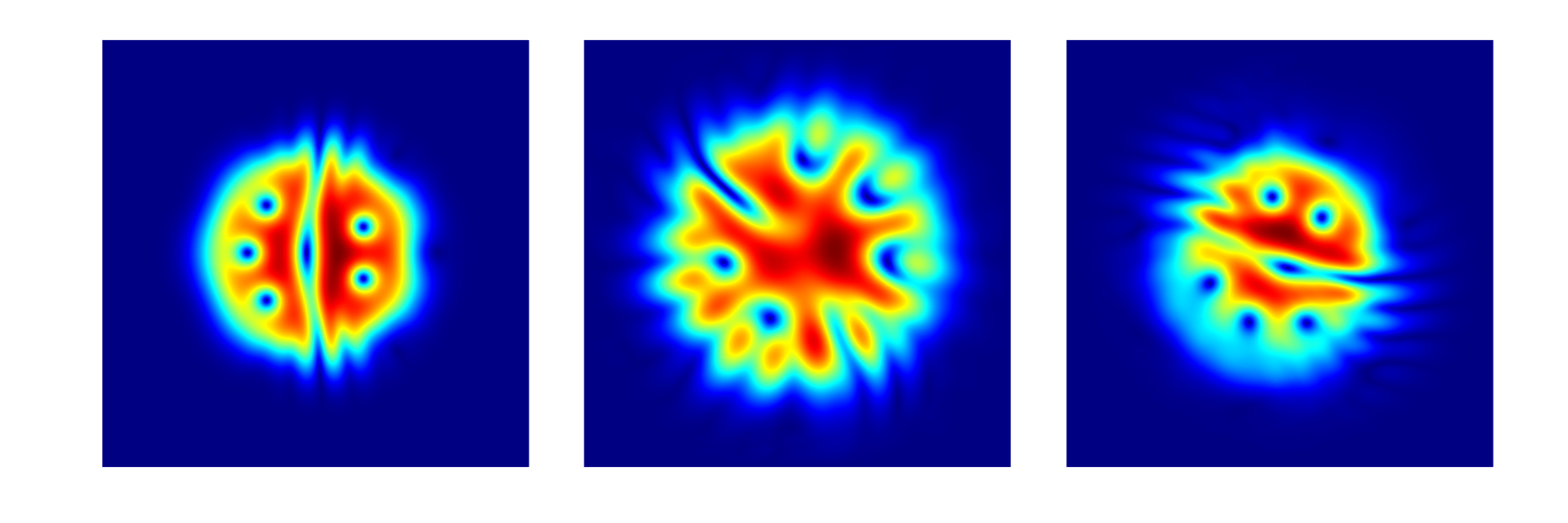}
    \includegraphics[width=1\linewidth]{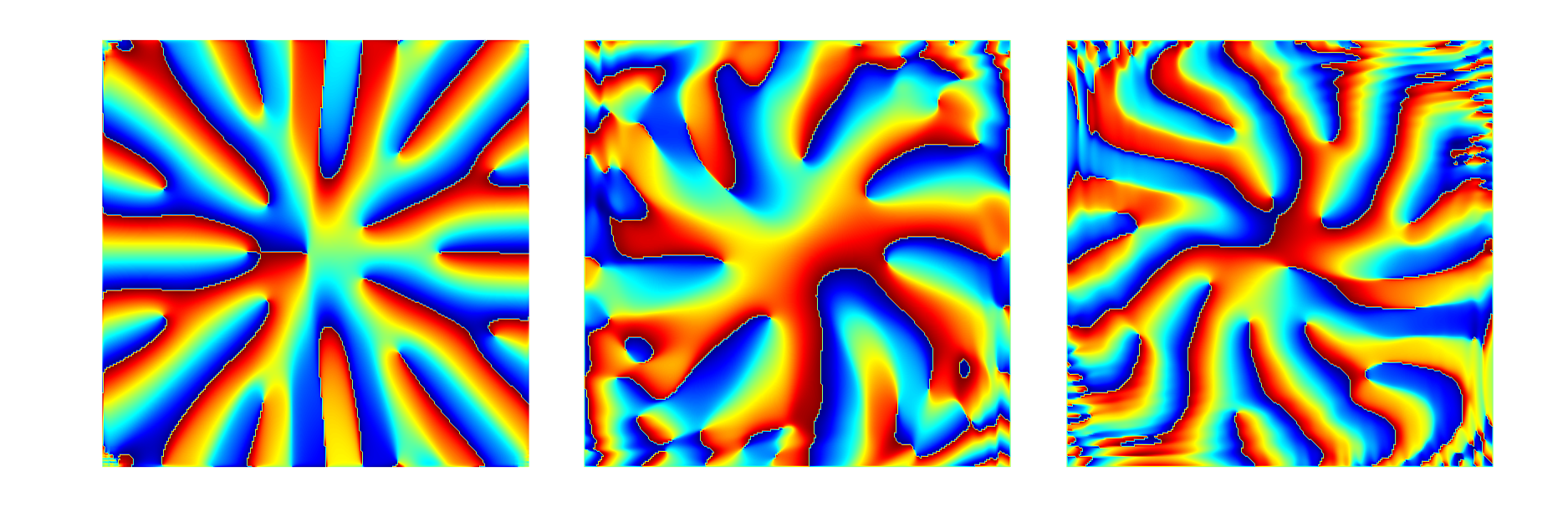}
    \caption{Contour plot of $|\Psi_{h, \tau}^n|$ and Arg$(\Psi_{h, \tau}^n)$ under $\epsilon=1/8$, at $t=0, 3, 6$ (Example 5.5).}
    \label{figure:12}
\end{figure}
\begin{figure}
    \centering
    \includegraphics[width=1\linewidth]{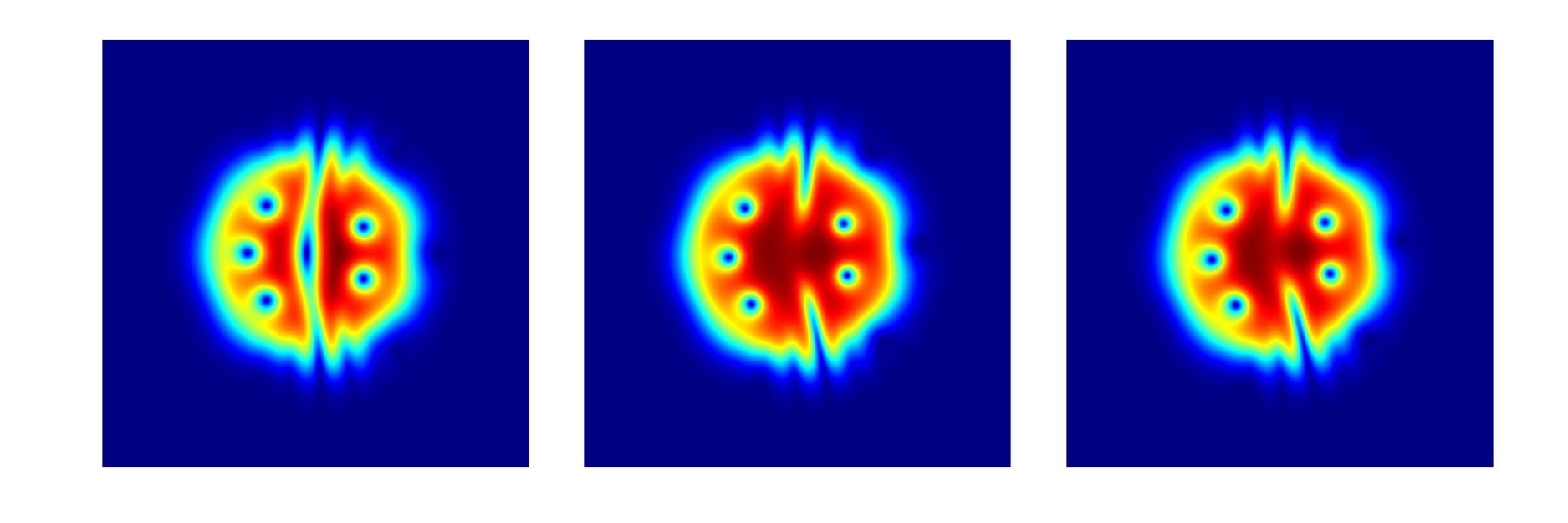}
    \includegraphics[width=1\linewidth]{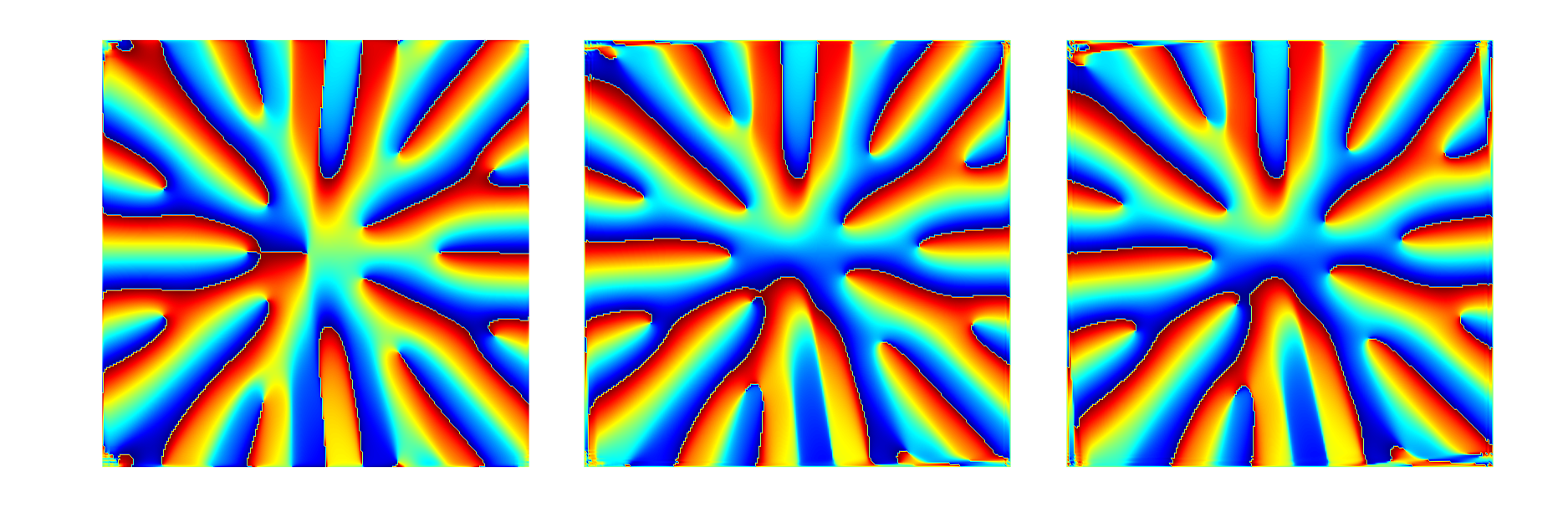}
    \caption{Contour plot of $|\Psi_{h, \tau}^n|$ and Arg$(\Psi_{h, \tau}^n)$ under $\epsilon=1/32$, at $t=0, 3, 6$ (Example 5.5).}
    \label{figure:13}
\end{figure}
\begin{example} [\textbf{Interaction of vortex pairs}]
    In the final example, we examine the interaction dynamics of vortex pairs in the nonrelativistic regime. Following the approach in \cite{Mauser2020on}, we initialize two well-separated vortex pairs as follow
    \[
    \psi_0\left(x,y\right) = \psi_1\left(x,y\right) = 
    \left(\left(x-c_0\right)+\textup{i}y\right) \left(\left(x+c_0\right)+\textup{i}y\right)
    \left(x+\textup{i}\left(y-c_0\right)\right) \left(x+\textup{i}\left(y+c_0\right)\right)
    e^{-\left(x^2+y^2\right)/2},
    \]
    where $c_0$ determines the initial vortex locations.
    The parameters and external potential are chosen as
    \[
    \lambda = 10, \qquad\Omega = 0.5,\qquad c_0 = 1.32,\qquad V=\frac{x^2 + y^2}{2}.
    \]
    Our objective is to investigate the dynamical evolution of two vortex pairs under varying values of $\epsilon$. The results are presented in Figures \ref{figure:14}-\ref{figure:16}. It can be observed that, as $\epsilon$ decreases in the nonrelativistic regime, the interactions between vortices become stronger, and the configurations of the solution gradually develop richer and more intricate structures.

\end{example}
\begin{figure}
    \centering
    \includegraphics[width=1\linewidth]{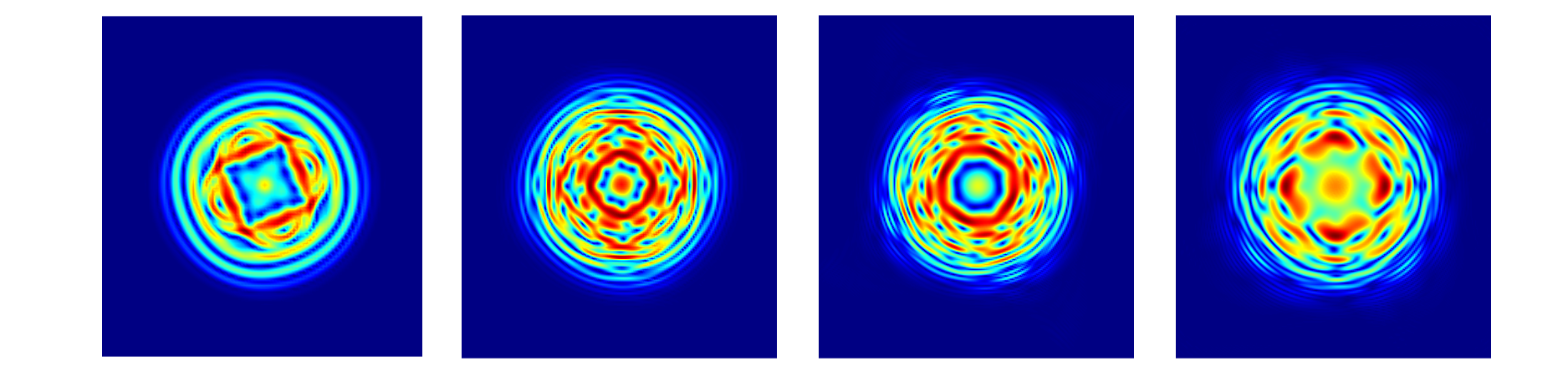}
    \includegraphics[width=1\linewidth]{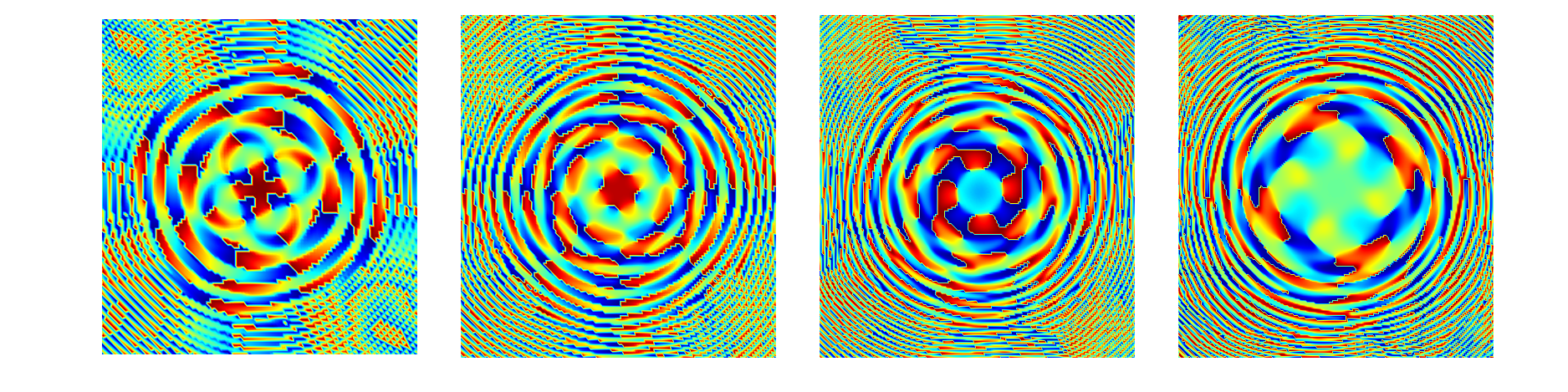}
    \caption{Contour plot of $|\Psi_{h, \tau}^n|$ and Arg$(\Psi_{h, \tau}^n)$ under $\epsilon=1/4$, at $t=0.79, 1.58, 2.37, 3.15$ (Example 5.6).}
    \label{figure:14}
\end{figure}
\begin{figure}
    \centering
    \includegraphics[width=1\linewidth]{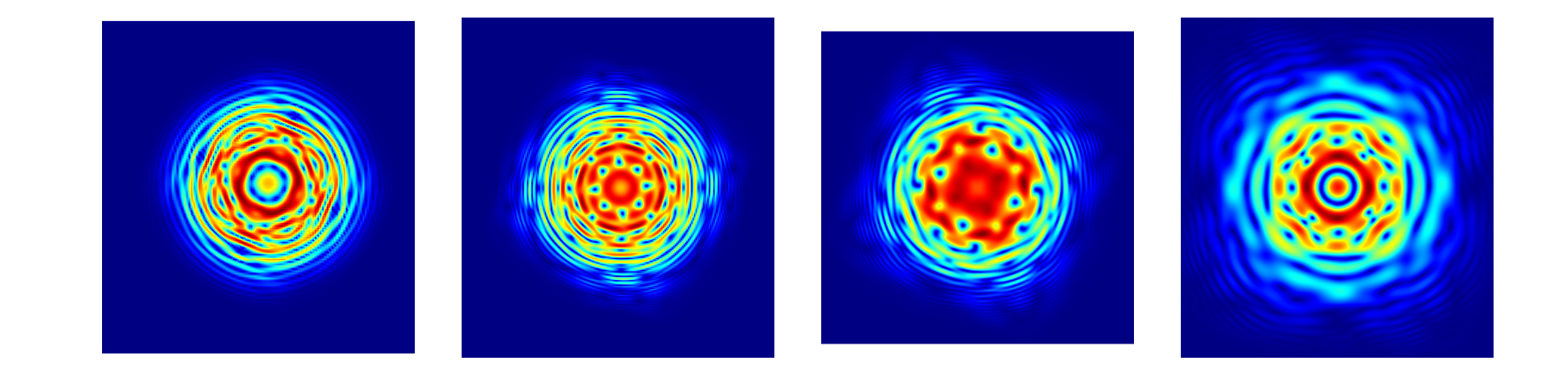}
    \includegraphics[width=1\linewidth]{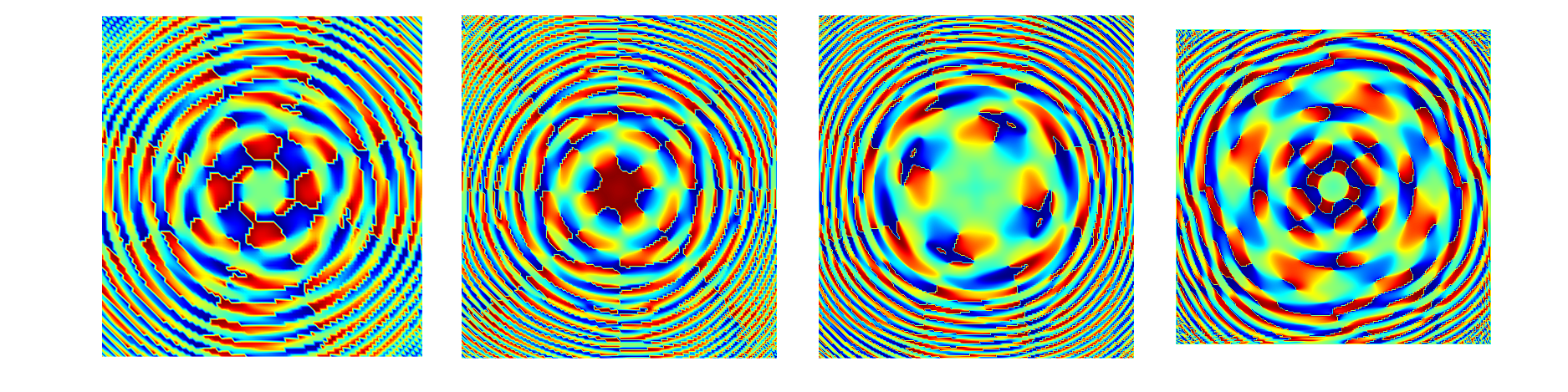}
    \caption{Contour plot of $|\Psi_{h, \tau}^n|$ and Arg$(\Psi_{h, \tau}^n)$ under $\epsilon=1/8$, at $t=0.79, 1.58, 2.37, 3.15$ (Example 5.6).}
    \label{figure:15}
\end{figure}
\begin{figure}
    \centering
    \includegraphics[width=1\linewidth]{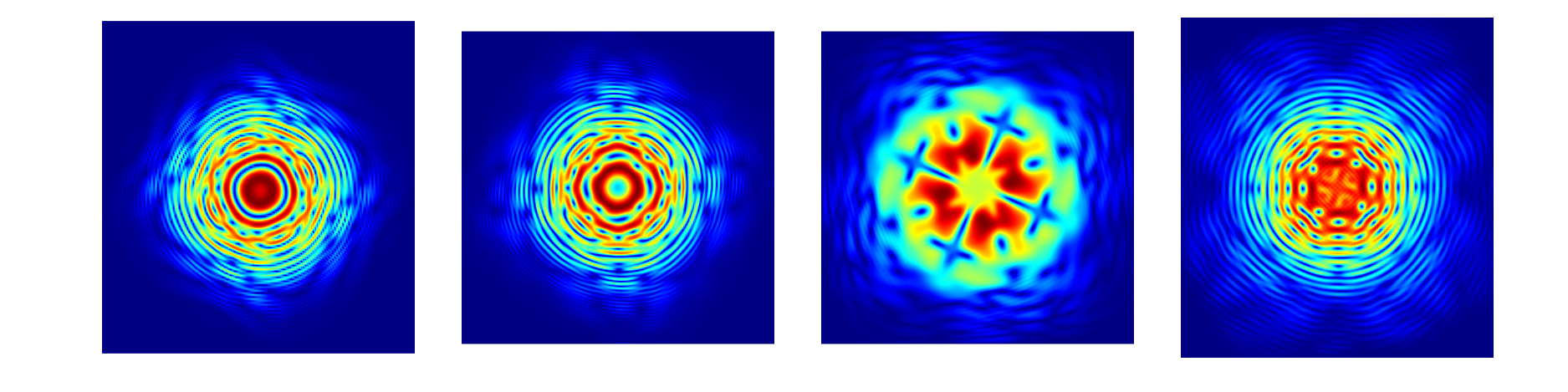}
    \includegraphics[width=1\linewidth]{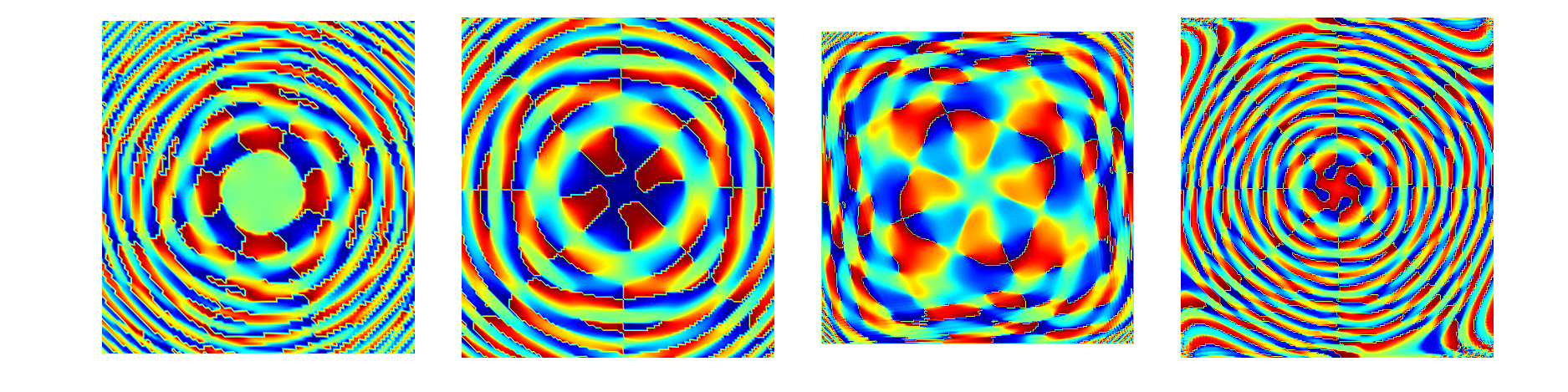}
    \caption{Contour plot of $|\Psi_{h, \tau}^n|$ and Arg$(\Psi_{h, \tau}^n)$ under $\epsilon=1/32$, at $t=0.79, 1.58, 2.37, 3.15$ (Example 5.6).}
    \label{figure:16}
\end{figure}

\section{Conclusions}\label{sec6}
In this paper, we develop and analyze conforming and nonconforming Galerkin FEMs for the RKG equation. With a conservation-adjusting technique, we construct consistent structure-preserving schemes that conserve both energy and charge. A rigorous convergence analysis establishes unconditional optimality and high-order accuracy. Extensive numerical tests confirm the theory and demonstrate the accuracy, efficiency, and robustness of the schemes in both relativistic and nonrelativistic regimes. Simulations of vortex dynamics illustrate the generation of vortices, relativistic effect on bound state, and vortex-pair interactions. The proposed methods provide a reliable framework for further studies of rotating relativistic quantum models.

\bibliographystyle{elsarticle-num}

\bibliography{thebibKG}

\appendix
\section{Normalized GFLM}
The gradient flow with discrete normalization \cite{bao2004computing}, also known as the imaginary time evolution method, has been widely recognized for its effectiveness in computing ground states and has seen extensive application in various BECs. Building upon this framework, Liu et al. \cite{liu2021normalized} introduced the normalized GFLM, which has proven highly efficient in studies of spinor BECs. In this work, we extend the normalized GFLM to compute the bound state solutions of the coupled RNLS equations.

The bound state is defined as the minimizer of the energy functional associated with the coupled RNLS equations, as follows.
\begin{align} \label{mini_energy_func}
     {E_{[z_+, z_-]}(t)} =& \frac{1}{2} \int_{\mathbb{R}^2} \bigg[|\nabla z_+|^2 + |\nabla z_-|^2 + V(x,y)\big( |z_+|^2 + |z_-|^2 \big)  + \frac{\lambda}{2} \Big( |z_+|^4 + |z_-|^4 + 2 |z_+|^2 |z_-|^2 \Big) \bigg] \, \text{d}x\text{d}y \nn \\
    &- \Omega \int_{\mathbb{R}^2} \mathrm{Re} \big( \overline{z_+} L_z z_+ + \overline{z_-} L_z z_- \big) \, \text{d}x\text{d}y,
\end{align}
 with the constraint 
 \begin{align} \label{eqn:constraint}
     \int_{\mathbb{R}^2} z_{+} \overline{z_{+}}\text{d}x\text{d}y\, =\alpha,\qquad
    \int_{\mathbb{R}^2} z_{-} \overline{z_{-}} \text{d}x\text{d}y\, = 1-\alpha,  
 \end{align}
 where  $\alpha\in\left[0, 1\right]$. 
To compute the ground state, we consider the Euler--Lagrange equations associated with the energy \eqref{mini_energy_func} under constraint \eqref{eqn:constraint}:
\begin{align} \label{eqn:E_L_eqn}
\begin{cases}
    & u_{+} z_{+} = -\frac{1}{2}\Delta z_{+} + \frac{1}{2} V z_{+} + \frac{\lambda}{2}\left(|z_{+}|^2 + 2|z_{-}|^2\right) z_{+} - \Omega L_z z_{+}, \\
    & u_{-} z_{-} = -\frac{1}{2}\Delta z_{-} + \frac{1}{2} V z_{-} + \frac{\lambda}{2}\left(|z_{-}|^2 + 2|z_{+}|^2\right) z_{-} - \Omega L_z z_{-},
\end{cases}
\end{align}
where \(u_{\pm}\) are the chemical potentials:
\begin{align} \label{eqn:potential}
\begin{cases}
    & u_{+} = \int_{\mathbb{R}^2} \left[ \frac{1}{2}|\nabla z_{+}|^2 + \frac{1}{2} V |z_{+}|^2 + \frac{\lambda}{2}\left(|z_{+}|^4 + 2|z_{-}|^2 |z_{+}|^2\right) - \Omega \overline{z_{+}} L_z z_{+} \right] \text{d}\mathbf{x}, \\
    & u_{-} = \int_{\mathbb{R}^2} \left[ \frac{1}{2}|\nabla z_{-}|^2 + \frac{1}{2} V |z_{-}|^2 + \frac{\lambda}{2}\left(|z_{-}|^4 + 2|z_{+}|^2 |z_{-}|^2\right) - \Omega \overline{z_{-}} L_z z_{-} \right] \text{d}\mathbf{x}.
\end{cases}
\end{align}
In what follows, we adopt the normalized GFLM to compute the bound state solutions.

We consider a uniform time step $\tau$ and set the temporal mesh points as $t_n = n \tau$.
We first consider the continuous GFLM over $t \in [t_n, t_{n+1})$, which reads
\begin{align} \label{eqn:CGFLM1}
\begin{cases}
    & \frac{\partial}{\partial t} z_{+}(\mathbf{x},t) = \Big[\frac{1}{2}\Delta - \frac{1}{2} V - \frac{\lambda}{2}(|z_{+}|^2 + 2|z_{-}|^2) + \Omega L_z \Big] z_{+}(\mathbf{x},t) + u_{+} z_{+}(\mathbf{x}, t_n), \\
    & \frac{\partial}{\partial t} z_{-}(\mathbf{x},t) = \Big[\frac{1}{2}\Delta - \frac{1}{2} V - \frac{\lambda}{2}(|z_{-}|^2 + 2|z_{+}|^2) + \Omega L_z \Big] z_{-}(\mathbf{x},t) + u_{-} z_{-}(\mathbf{x}, t_n),
\end{cases}
\end{align}
with the discrete normalization at $t = t_{n+1}$ given by
\begin{align} \label{eqn:CGFLM2}
\begin{cases}
    & z_{+}(\mathbf{x}, t_{n+1}) = z_{+}(\mathbf{x}, t_{n+1}^+) = \frac{\sqrt{\alpha}\, z_{+}(\mathbf{x}, t_{n+1}^-)}{\|z_{+}(\mathbf{x}, t_{n+1}^-)\|_{L^2}}, \\
    & z_{-}(\mathbf{x}, t_{n+1}) = z_{-}(\mathbf{x}, t_{n+1}^+) = \frac{\sqrt{1-\alpha}\, z_{-}(\mathbf{x}, t_{n+1}^-)}{\|z_{-}(\mathbf{x}, t_{n+1}^-)\|_{L^2}},
\end{cases}
\end{align}
where $u_{\pm}$ denote the chemical potentials as in \eqref{eqn:potential}.

We then discretize \eqref{eqn:CGFLM1} and \eqref{eqn:CGFLM2} using a backward/forward Euler scheme:
\begin{align} \label{eqn:GFLM}
\begin{cases}
    &\frac{z_{+}^{(1)} - z_{+}^n}{\tau} = \left(\frac{1}{2}\Delta - \beta_{+}^n\right) z_{+}^{(1)} 
    + \Big[\beta_{+}^n - V + \Omega L_{z} - \lambda\left(|z_{+}^n|^2 + 2 |z_{-}^n|^2\right)\Big] z_{+}^n 
    + u_{+}^n z_{+}^n, \\
    &\frac{z_{-}^{(1)} - z_{-}^n}{\tau} = \left(\frac{1}{2}\Delta - \beta_{-}^n\right) z_{-}^{(1)} 
    + \Big[\beta_{-}^n - V + \Omega L_{z} - \lambda\left(|z_{-}^n|^2 + 2 |z_{+}^n|^2\right)\Big] z_{-}^n 
    + u_{-}^n z_{-}^n,
\end{cases}
\end{align}
followed by the discrete normalization
\begin{align*}
     z_{+}^{n+1} = \frac{\sqrt{\alpha}\, z_{+}^{(1)}}{\|z_{+}^{(1)}\|_{L^2}^2}, \qquad
     z_{-}^{n+1} = \frac{\sqrt{1-\alpha}\, z_{-}^{(1)}}{\|z_{-}^{(1)}\|_{L^2}^2}.
\end{align*}
Here, the stabilization parameters are defined as 
\(\beta_{+}^n = \frac{1}{2}\left(\max G_{+}^n + \min G_{+}^n\right)\) with \(G_{+}^n = V + \lambda\left(|z_{+}^n|^2 + 2 |z_{-}^n|^2\right)\), 
and \(\beta_{-}^n = \frac{1}{2}\left(\max G_{-}^n + \min G_{-}^n\right)\) with \(G_{-}^n = V + \lambda\left(|z_{-}^n|^2 + 2 |z_{+}^n|^2\right)\).
Starting from given initial data \(z_{\pm}^0(\mathbf{x})\), the iteration proceeds until convergence, which is defined by
\[
\bigg|E_{[z_+^{n+1}, z_-^{n+1}]} - E_{[z_+^{n}, z_-^{n}]}\bigg| \leq \text{tol},
\]
where \(\text{tol}\) is a suitably small positive constant chosen to ensure numerical accuracy.

\section{Proof of Lemma \ref{lem:timediscrete_truncate_existence}}
\renewcommand{\thesection}{\Alph{section}}
~~To establish the existence of the truncated time-discrete method, we utilize the Brouwer fixed point theorem \cite{brouwer1911abbildung} and the Vitali convergence theorem \cite{leoni2017first}.
\begin{lemma}\label{lem:Brouwer} (Brouwer Fixed Point Theorem)
Let \( \overline{S_1(0)} = \left\{\boldsymbol{\alpha} \in \mathbb{C}^N \mid |\boldsymbol{\alpha}|\leq 1\right\} \) be the closed unit disk in \( \mathbb{C}^N \). Suppose \( g: \mathbb{C}^N \to \mathbb{C}^N \) is a continuous function satisfying the condition  
\[
\operatorname{Re}\left(g(\boldsymbol{\alpha}), \boldsymbol{\alpha}\right) \geq 0, \quad \forall \boldsymbol{\alpha} \in \partial S_1(0).
\]
Then, there exists at least one point \( \boldsymbol{\alpha}_0 \in \overline{S_1(0)} \) such that  
\[
g(\boldsymbol{\alpha}_0) = 0.
\]
\end{lemma}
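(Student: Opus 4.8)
The plan is to obtain this zero-existence statement as a short corollary of the classical Brouwer fixed point theorem (every continuous self-map of a closed Euclidean ball has a fixed point), which I take to be the underlying cited result \cite{brouwer1911abbildung} rather than something to reprove from scratch. First I would identify $\mathbb{C}^N$ with $\mathbb{R}^{2N}$ through $\boldsymbol{\alpha} = \mathbf{a} + \textup{i}\mathbf{b} \mapsto (\mathbf{a}, \mathbf{b})$, under which $|\boldsymbol{\alpha}|$ agrees with the Euclidean norm on $\mathbb{R}^{2N}$ and $\operatorname{Re}(\mathbf{u}, \mathbf{v})$ equals the standard real inner product of the corresponding vectors. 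Consequently $\overline{S_1(0)}$ becomes the closed unit ball of $\mathbb{R}^{2N}$, a compact convex set to which the classical theorem applies verbatim.

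Next I would argue by contradiction. Suppose $g(\boldsymbol{\alpha}) \neq 0$ for every $\boldsymbol{\alpha} \in \overline{S_1(0)}$, so that $|g(\boldsymbol{\alpha})| > 0$ throughout. Then the map
\[
h(\boldsymbol{\alpha}) := -\frac{g(\boldsymbol{\alpha})}{|g(\boldsymbol{\alpha})|}
\]
is well defined and continuous on $\overline{S_1(0)}$, being the composition of continuous maps with a nonvanishing denominator, and it satisfies $|h(\boldsymbol{\alpha})| = 1$. Hence $h$ maps $\overline{S_1(0)}$ into its boundary $\partial S_1(0) \subset \overline{S_1(0)}$, i.e. $h$ is a continuous self-map of the closed ball. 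Brouwer's theorem then furnishes a fixed point $\boldsymbol{\alpha}_* = h(\boldsymbol{\alpha}_*)$, and since $|\boldsymbol{\alpha}_*| = |h(\boldsymbol{\alpha}_*)| = 1$, this point necessarily lies on $\partial S_1(0)$, exactly where the sign hypothesis is available.

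Finally I would extract the contradiction by pairing the fixed-point identity $\boldsymbol{\alpha}_* = -g(\boldsymbol{\alpha}_*)/|g(\boldsymbol{\alpha}_*)|$ with $\boldsymbol{\alpha}_*$ and taking real parts:
\[
1 = |\boldsymbol{\alpha}_*|^2 = \operatorname{Re}(\boldsymbol{\alpha}_*, \boldsymbol{\alpha}_*) = -\frac{1}{|g(\boldsymbol{\alpha}_*)|}\operatorname{Re}(g(\boldsymbol{\alpha}_*), \boldsymbol{\alpha}_*) \leq 0,
\]
where the last inequality uses $\operatorname{Re}(g(\boldsymbol{\alpha}), \boldsymbol{\alpha}) \geq 0$ on $\partial S_1(0)$, valid because $\boldsymbol{\alpha}_* \in \partial S_1(0)$. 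The relation $1 \leq 0$ is absurd, so $g$ must vanish at some $\boldsymbol{\alpha}_0 \in \overline{S_1(0)}$. The only delicate points are ensuring the auxiliary map $h$ is genuinely well defined (guaranteed precisely by the contradiction hypothesis $g \neq 0$) and that the resulting fixed point lands on the boundary (guaranteed by the normalization $|h| \equiv 1$); the genuinely hard topological content is entirely absorbed into the classical Brouwer theorem, leaving only this normalization-and-inner-product argument.
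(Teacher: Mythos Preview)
Your argument is correct and is the standard derivation of this zero-existence corollary from the classical Brouwer fixed point theorem: the normalization map $h=-g/|g|$ forces any fixed point onto the sphere, and pairing with $\boldsymbol{\alpha}_*$ contradicts the sign hypothesis. Note, however, that the paper does not actually prove this lemma; it is stated without proof as a classical result with a citation to \cite{brouwer1911abbildung}, so there is no ``paper's own proof'' to compare against---you have simply supplied the omitted standard argument.
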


\begin{lemma}\label{lem:Vit}
	(Vitali Convergence Theorem)
	A sequence $(g_k)_{k\in \mathbb N}\subset L^2(U)$ converges strongly to a function $g\in L^2(U)$ if and only if 
	\begin{itemize}
		\item [(i)] $(g_k)_{k\in \mathbb N}$	 converges locally  to $g$ in measure;
		\item [(ii)] $(g_k)_{k\in \mathbb N}$ is $2$-equi-integrable, which implies that for any $\epsilon>0$, there exists a positive constant $\delta_\epsilon$ such that $\|g_k\|_{L^2(S)}<\epsilon$, for any measurable subset $S\subset U$ with the measure 
		$\mu(S)\leq \delta_\epsilon$.
	\end{itemize}
	
\end{lemma}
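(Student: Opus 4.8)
The plan is to prove the two implications of the biconditional separately, exploiting throughout that $U$ is a bounded domain, so that $\mu(U)=\mathcal{L}^d(U)<\infty$. On a finite measure space local convergence in measure coincides with (global) convergence in measure, so condition (i) may be read as ordinary convergence in measure; moreover the tightness hypothesis that appears in the Vitali theorem over general measure spaces is automatic here and may be omitted. Thus only the two conditions stated need be matched against strong $L^2$ convergence.

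For the necessity direction I would assume $g_k\to g$ strongly in $L^2(U)$. Condition (i) is immediate from the Chebyshev (Markov) inequality: for each threshold $\eta>0$ one has $\mu(\{|g_k-g|>\eta\})\leq \eta^{-2}\|g_k-g\|_{L^2(U)}^2\to 0$. For condition (ii) I would invoke the absolute continuity of the Lebesgue integral for the single limit $g\in L^2(U)$: given $\epsilon>0$ there is $\delta'$ with $\|g\|_{L^2(S)}<\epsilon/2$ whenever $\mu(S)\leq\delta'$. Picking $K$ with $\|g_k-g\|_{L^2(U)}<\epsilon/2$ for $k>K$, the triangle inequality $\|g_k\|_{L^2(S)}\leq \|g_k-g\|_{L^2(U)}+\|g\|_{L^2(S)}$ gives the uniform tail bound; the finitely many indices $k\leq K$ are handled one at a time by absolute continuity of each $\int_S|g_k|^2$, and $\delta_\epsilon$ is taken as the minimum of the finitely many thresholds.

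For the sufficiency direction I would assume (i) and (ii) and first establish an a priori $L^2$-bound: since $\mu(U)<\infty$, partition $U$ into finitely many measurable pieces each of measure at most $\delta_1$ (the equi-integrability threshold for the value $\epsilon=1$), so that $\|g_k\|_{L^2(U)}^2=\sum\|g_k\|_{L^2(\text{piece})}^2$ is bounded uniformly in $k$. Extracting from the measure-convergent sequence a subsequence converging a.e. to $g$ and applying Fatou's lemma then yields $g\in L^2(U)$, hence $g$ too has an absolutely continuous integral. The main estimate splits, for a fixed $\epsilon>0$ with associated $\delta_\epsilon$, the domain along the bad set $E_k:=\{|g_k-g|>\eta\}$: using $|g_k-g|^2\leq 2|g_k|^2+2|g|^2$ together with the equi-integrability of $\{g_k\}$ and of $g$ on $E_k$ (whose measure is $<\delta_\epsilon$ for large $k$ by (i)) bounds $\int_{E_k}|g_k-g|^2$ by a fixed multiple of $\epsilon$, while $\int_{U\setminus E_k}|g_k-g|^2\leq \eta^2\mu(U)$, which is $<\epsilon$ once $\eta$ is chosen small. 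Letting $\epsilon\to 0$ gives $\|g_k-g\|_{L^2(U)}\to 0$.

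The step I expect to be the main obstacle is the bookkeeping in the sufficiency direction, specifically establishing $g\in L^2(U)$ and the equi-integrability of the limit before the splitting estimate can be applied: one must combine the uniform $L^2$-bound, the a.e.-convergent subsequence, and Fatou's lemma in the correct order, and verify that the final bound $4\epsilon+\eta^2\mu(U)$ holds with constants genuinely independent of $k$ for all large $k$. The remaining ingredients — Chebyshev's inequality, absolute continuity of the integral, and the elementary bound $|a-b|^2\leq 2|a|^2+2|b|^2$ — are routine.
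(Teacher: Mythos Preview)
Your argument is a correct and standard proof of the Vitali convergence theorem on a finite measure space. However, note that the paper does not actually prove this lemma: it is stated with a citation to \cite{leoni2017first} and used as a black box in the existence argument of Appendix~B, so there is no ``paper's own proof'' to compare against. Your write-up would serve perfectly well as a self-contained justification should one be desired.
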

    
    We write \eqref{eqn:timediscrete_truncate} into its equivalent form: 
	\begin{align}\label{lem:timediscrete_truncate_existence_pf1}
\widetilde{\Psi}_\tau^{T, n}-\Psi_\tau^{T, n}+f(\widetilde{\Psi}_\tau^{T, n})-\textup{i}\Omega\tau L_z \widetilde{\Psi}_\tau^{T, n}-\frac{\tau^2\Omega^2}{2}L_z^2\widetilde{\Psi}_\tau^{T, n}=	
-\textup{i}\Omega\tau L_z {\Psi}_\tau^{T, n-1},
	\end{align}
	where
	\begin{align*}
		f(\widetilde{\Psi}_\tau^{T, n}) :=\left(\frac{\tau^2}{2\epsilon^4}+\frac{\tau^2}{2\epsilon^2}V\right)\widetilde{\Psi}_\tau^{T, n}-\frac{\tau^2}{2\epsilon^2}\Delta \widetilde{\Psi}_\tau^{T, n}
		+\frac{\lambda\tau^2}{4\epsilon^2}\left[\mu_A\left(\left|2 \widetilde{\Psi}_\tau^{T, n}-{\Psi}_\tau^{T, n-1}\right|^2\right)
		+\mu_A\left(\left|\Psi^{T, n-1}_\tau\right|^2\right)\right]\widetilde{\Psi}_\tau^{T, n}.
	\end{align*}
Hence, we only need to demonstrate the existence of the solution to 
	\begin{align}\label{lem:timediscrete_truncate_existence_pf2}
		F(\omega)=\omega -\Psi_\tau^{T, n}+f(\omega)-\textup{i}\Omega\tau L_z \omega-\frac{\tau^2\Omega^2}{2}L_z^2\omega+\text{i}\Omega\tau L_z {\Psi}_\tau^{T, n-1}=0. 
	\end{align}
	The following proof will be divided into two steps.\\
	\textbf{Step 1}. We first show the existence in a finite-dimensional subspace
$\chi_N:=\left\{\phi_m,~m\in \mathbb N\right\}$, which represents a countable basis of $H_0^1(U)$. For given \(\Psi_\tau^{T, n} \in H_0^1(U)\) and \(\Psi_\tau^{T, n-1} \in H_0^1(U)\), our next goal is to prove the existence of a solution \(X_N \in \chi_N\) to the equation:  
\begin{align}  \label{lem:timediscrete_truncate_existence_pf3}
	F(X_N) = X_N  - \Psi_\tau^{T, n} + f(X_N ) - \text{i}\Omega \tau L_z X_N -\frac{\tau^2\Omega^2}{2}L_z^2X_N + \text{i}\Omega\tau L_z \Psi_\tau^{T, n-1} = 0.  
\end{align}
By taking the inner product of \eqref{lem:timediscrete_truncate_existence_pf3} with $X_N$, and extracting the real part of the equation, we have 
\begin{align}  \label{lem:timediscrete_truncate_existence_pf4}
  \text{Re}\left(F(X_N), X_N\right)
  =&\left\|X_N\right\|_{L^2}^2-\text{Re}\left(\Psi_\tau^{T, n}, X_N\right)+ 
  \text{Re}\left(f(X_N ), X_N\right) 
  - \Omega \tau \text{Re}\left(\text{i}L_z X_N, X_N\right)  \nn\\
  &
  -\frac{\tau^2\Omega^2}{2}\left\|L_zX_N\right\|_{L^2}^2
  + 
  \Omega\tau \text{Re}\left(\text{i}L_z\Psi_\tau^{T, n-1}, X_N\right). 
\end{align}
For the term $\text{Re}\left(f(X_N ), X_N\right)$, from \eqref{eqn:equi2}, it is obvious that 
\begin{align}\label{lem:timediscrete_truncate_existence_pf5}
	\text{Re}\left(f(X_N ), X_N\right)
	&=\left(\frac{\tau^2}{2\epsilon^4}+\frac{\tau^2}{2\epsilon^2}V\right)
	\left\|X_N\right\|_{L^2}^2
	+\frac{\tau^2}{2\epsilon^2}\left\|\nabla X_N\right\|_{L^2}^2
	+\frac{\lambda\tau^2}{4\epsilon^2}\left(\left[\mu_A\left(\left|2 \widetilde{\Psi}_\tau^{T, n}-{\Psi}_\tau^{T, n-1}\right|^2\right)
	+\mu_A\left(\left|\Psi^{T, n-1}_\tau\right|^2\right)\right]X_N, X_N\right)\nn\\
	&\geq  \frac{\tau^2}{2\epsilon^2}\left\|\nabla X_N\right\|_{L^2}^2. 
\end{align}
In addition, we have 
\begin{align}\label{lem:timediscrete_truncate_existence_pf6}
	- \Omega \tau \text{Re}\left(\text{i}L_z X_N, X_N\right) = \Omega \tau \text{Re}\left(\left[x\partial_y-y\partial_x\right] X_N, X_N\right)
	 =\Omega \tau \frac{\left(\left[x\partial_y-y\partial_x\right] X_N, X_N\right)+\left(X_N, \left[x\partial_y-y\partial_x\right] X_N\right)}{2}=0. 
\end{align}
By using \eqref{eqn:equi1}, there holds 
\begin{align}\label{lem:timediscrete_truncate_existence_pf7}
\frac{\tau^2\Omega^2}{2}\left\|L_zX_N\right\|_{L^2}^2 \leq  
	\frac{\tau^2\Omega^2}{2C_{U_1}^2}\left\|\nabla X_N\right\|_{L^2}^2. 
\end{align}
Furthermore, there holds 
\begin{align}\label{lem:timediscrete_truncate_existence_pf8}
\Omega\tau \text{Re}\left(\text{i}L_z\Psi_\tau^{T, n-1}, X_N\right)\leq \Omega\tau\left\|L_z\Psi_\tau^{T, n-1}\right\|_{L^2}\left\|X_N\right\|_{L^2}
	\leq \frac{\Omega\tau}{C_{U_1}}\left\|\nabla \Psi_\tau^{T, n-1}\right\|_{L^2}\left\|X_N\right\|_{L^2}. 
\end{align}
Substituting \eqref{lem:timediscrete_truncate_existence_pf6}-\eqref{lem:timediscrete_truncate_existence_pf8} into \eqref{lem:timediscrete_truncate_existence_pf5} gives that 
\begin{align}  \label{lem:timediscrete_truncate_existence_pf9}
	\text{Re}\left(F(X_N), X_N\right)
	\geq \left[\left\|X_N\right\|_{L^2}-\left\|\Psi_\tau^{T, n}\right\|_{L^2}-\frac{\Omega\tau}{C_{U_1}}\left\|\nabla \Psi_\tau^{T, n-1}\right\|_{L^2}\right]\left\|X_N\right\|_{L^2} +\left[\frac{\tau^2}{2\epsilon^2}
	-\frac{\tau^2\Omega^2}{2C_{U_1}^2}
	\right]\left\|\nabla X_N\right\|_{L^2}^2. 
\end{align}
We assume that $\epsilon$ is selected sufficiently small, such that 
\begin{align}\label{lem:timediscrete_truncate_existence_pf10}
	\frac{\tau^2}{2\epsilon^2}
	-\frac{\tau^2\Omega^2}{2C_{U_1}^2}\geq 0. 
\end{align}
Then, we follow a similar proof process as in Theorem \ref{thm:boundedness} to establish the boundedness of \(\Psi_\tau^{T, n}\) in the \(H^1\)-norm for \(0 \leq n \leq N\). As a result, we obtain the inequality
\begin{align}  \label{lem:timediscrete_truncate_existence_pf11}
\left\|\Psi_\tau^{T, n}\right\|_{L^2} + \frac{\Omega \tau}{C_{U_1}} \left\|\nabla \Psi_\tau^{T, n-1}\right\|_{L^2} \leq M,
\end{align}
where \(M > 0\) is a constant independent of \(\tau\) and \(h\). We then choose a sufficiently large value for \(\|X_N\|_{L^2}\) in \eqref{lem:timediscrete_truncate_existence_pf9} such that \(\|X_N\|_{L^2} \geq M\). By applying Lemma \ref{lem:Brouwer}, we conclude the existence of a solution to \eqref{lem:timediscrete_truncate_existence_pf3}. 
Therefore, $Z_N=2X_N-\Psi_\tau^{T, n-1}$ exists. \\
	\textbf{Step 2}. This step is to prove the existence of weak solution to \eqref{eqn:timediscrete_truncate} in $H_0^1(U)$. Using a similar method as Theorem \ref{eqn-timediscrete-conservation}, we can prove the energy conservation of the solution to the system \eqref{lem:timediscrete_truncate_existence_pf3}. Then, using the energy conservation, we can further establish the boundedness of the \(H^1\)-norm for \(\{Z_N\}_{N\in\mathbb{N}}\), i.e., \(\|Z_N\|_{H^1} \leq K\). Consequently, there exists a subsequence of \(\{Z_N\}_{N\in\mathbb{N}}\) (which, for simplicity, is again denoted by itself) and a function \(Z \in H_0^1(U)\) such that  
	\begin{align}\label{lem:timediscrete_truncate_existence_pf12}  
		\lim_{N \to \infty} Z_N = Z \quad \text{strongly in } L^2(U), \qquad \lim_{N \to \infty} Z_N = Z \quad \text{weakly in } H_0^1(U).  
	\end{align}
Therefore, we can obtain that $X_N$ strongly converges to $X:=\left(Z+\Psi_\tau^{T, n-1}\right)/2$ in $L^2(U)$ and weakly converges to $X$ in $H_0^1(U)$, since $Z_N=2X_N-\Psi_\tau^{T, n-1}$. The weak formulation of \eqref{lem:timediscrete_truncate_existence_pf3} can be written as 
\begin{align}  \label{lem:timediscrete_truncate_existence_pf13}
	\left(F(X_N), \chi\right) = L(X_N, \chi)+N(X_N, \chi)=0,  
\end{align}
where 
\begin{align*}
	& L(X_N, \chi):= \left(X_N - \Psi_\tau^{T, n}+\left(\frac{\tau^2}{2\epsilon^4}+\frac{\tau^2}{2\epsilon^2}V\right)
	-\textup{i}\Omega\tau L_zX_N
	+\textup{i}\Omega\tau L_z\Psi_\tau^{T, n-1}, \chi\right)
	+\frac{\tau^2}{2\epsilon^2}\left(\nabla X_N, \nabla \chi\right)
	+\frac{\tau^2\Omega^2}{2}\left(L_zX_N, L_z\chi\right),\\
	& N(X_N, \chi):= \frac{\lambda\tau^2}{4\epsilon^2}\left(\left[\mu_A\left(\left|2 X_N-{\Psi}_\tau^{T, n-1}\right|^2\right)
	+\mu_A\left(\left|\Psi^{T, n-1}_\tau\right|^2\right)\right]X_N, \chi\right)=:\left(\mathcal{N}(X_N), \chi\right).
\end{align*}
Using \eqref{lem:timediscrete_truncate_existence_pf12}, we have 
\begin{align}\label{lem:timediscrete_truncate_existence_pf14}
	\lim_{N \to \infty}L(X_N, \chi) = L(X, \chi). 
\end{align}
Furthermore, we can use Lemma \ref{lem:Vit} to prove the convergence of the nonlinear term \( N(X_N, \chi) \). Indeed, the strong convergence in \eqref{lem:timediscrete_truncate_existence_pf12} implies that \( X_N \) converges to \( X \) in measure. The continuity of \( \mathcal{N} \) then ensures that the sequence \( \{\mathcal{N}(X_N)\}_{N\in\mathbb{N}} \) converges locally in measure to \( \mathcal{N}(X) \).  
Moreover, we can show that the sequence \( \{\mathcal{N}(X_N)\}_{N\in\mathbb{N}} \) is 2-equi-integrable, owing to the boundedness of the operator \( \mathcal{N}(\cdot) \) and the strong convergence in \eqref{lem:timediscrete_truncate_existence_pf12}. From Lemma \ref{lem:Vit}, we can obtain 
\begin{align}\label{lem:timediscrete_truncate_existence_pf15}
	\lim_{N \to \infty}N(X_N, \chi) = N(X, \chi). 
\end{align}
By using \eqref{lem:timediscrete_truncate_existence_pf14} and \eqref{lem:timediscrete_truncate_existence_pf15}, we conclude 
\begin{align}\label{lem:timediscrete_truncate_existence_pf16}
	\lim_{N \to \infty}\left(F(X_N), \chi\right) = \left(F(X), \chi\right). 
\end{align}
Therefore, the existence of the solution $X_N$ in \textbf{Step 1} implies that the existence of weak solution to \eqref{eqn:timediscrete_truncate} in $H_0^1(U)$. 
\textbf{Step 3}. 
This step aims to establish the existence of a strong solution to \eqref{eqn:timediscrete_truncate} in \( H_0^1(U) \cap H^2(U) \). This result follows from classical elliptic regularity theory \cite{evans2022partial}. Specifically, to show that \( \Delta X_N \) strongly converges to \( \Delta X \) in \( L^2(U) \), it is also necessary to demonstrate its strong convergence in \( H^1(U) \). To achieve this, we first derive an error estimate for the system \eqref{lem:timediscrete_truncate_existence_pf3}, following a similar approach to the error analysis of the time-discrete method presented in the next subsection.


Combining the results from \textbf{Steps 1-3}, we ultimately establish the existence of a solution to the truncated time-discrete method \eqref{eqn:timediscrete_truncate}.
\end{document}